\newtheorem{theorem}{Theorem}
\newtheorem{lemma}[theorem]{Lemma}
\newtheorem{prop}[theorem]{Proposition}
\newtheorem{ques}{Question}
\newtheorem{exam}{Example}
\newcommand{\R}{\mathbb{R}}
\newcommand{\N}{\mathbb{N}}
\newcommand{\K}{\mathcal{K}}
\newcommand{\M}{\mathcal{M}}
\newcommand{\B}{\mathcal{B}}
\newcommand{\ctm}{\mathfrak{c}}
\newcommand{\diag}{\Delta}
\newcommand{\tr}{\ge_T}
\newcommand{\cl}[1]{\overline{#1}}
\newcommand{\cof}[1]{\mathop{cof}{#1}}
\newcommand{\down}[1]{\, \downarrow \! \! #1\,}
\begin{document}

\title{$P$-Paracompact and $P$-Metrizable Spaces}

\author{Ziqin Feng}
\address{Department of Mathematics and Statistics, Auburn University, Auburn, AL~36849, USA.}
\email{zzf0006@auburn.edu}

\author{Paul Gartside} 
\address{Department of Mathematics, University of Pittsburgh, Pittsburgh, PA~15260, USA.} 
\email{gartside@math.pitt.edu} 
\thanks{This work was partially supported by a grant from the Simons Foundation  (\#209751 to Paul Gartside)}

\author{Jeremiah Morgan}
\address{Department of Mathematics, University of Pittsburgh, Pittsburgh, PA~15260, USA.}
\email{jsm63@pitt.edu}

\subjclass[2010]{54A05, 54D70, 54E35}

\keywords{$P$-metrizable spaces, $P$-paracompact spaces, Directed sets,  Calibre $(\omega_1, \omega)$}

\begin{abstract}
Let $P$ be a directed set and $X$ a space. A collection $\mathcal{C}$ of subsets of $X$ is \emph{$P$-locally finite}  if $\mathcal{C}=\bigcup \{ \mathcal{C}_p : p \in P\}$ where  (i) if $p \le p'$ then $\mathcal{C}_p \subseteq \mathcal{C}_{p'}$ and (ii) each $\mathcal{C}_p$ is locally finite. Then $X$ is \emph{$P$-paracompact} if every open cover has a $P$-locally finite open refinement. Further, $X$ is \emph{$P$-metrizable} if it has a $(P \times \N)$-locally finite base.  This work provides the first detailed study of $P$-paracompact and $P$-metrizable spaces,  particularly in the case when $P$ is a $\K(M)$, the set of all compact subsets of a separable metrizable space $M$ ordered by set inclusion.
\end{abstract}

\maketitle

\section{Introduction}

Let $P$ be a directed set and $X$ a space (all spaces are assumed to be Tychonoff, unless otherwise stated). A collection $\mathcal{C}$ of subsets of $X$ is \emph{$P$-point finite} (respectively, \emph{$P$-locally finite}) if $\mathcal{C}=\bigcup \{ \mathcal{C}_p : p \in P\}$ where  (i) if $p \le p'$ then $\mathcal{C}_p \subseteq \mathcal{C}_{p'}$ and (ii) each $\mathcal{C}_p$ is point finite (respectively, locally finite). Then $X$ is \emph{$P$-metacompact} (respectively, \emph{$P$-paracompact}) if every open cover has a $P$-point finite (respectively, $P$-locally finite) open refinement. Further, $X$ is \emph{$P$-metrizable} if it has a $(P \times \N)$-locally finite base. Note that  $1$-metacompact is metacompact, $1$-paracompact is paracompact, and $1$-metrizable is metrizable. This work provides the first detailed study of $P$-paracompact and $P$-metrizable spaces, with particular emphasis on the case when $P$ is a $\K(M)$, the set of all compact subsets of a separable metrizable space $M$ ordered by set inclusion.

These covering properties were introduced to characterize certain types of compact spaces arising in analysis. Indeed a compact space $X$ is (i) Eberlein, (ii) Talagrand, or (iii) Gul'ko  if and only if $X^2\setminus\Delta$ is (i)$'$ $\N$-metacompact, (ii)$'$ $\K(\omega^\omega)$-metacompact, or (iii)$'$ $\K(M)$-metacompact, for some separable metrizable $M$ (respectively). The equivalence of (i) and (i)$'$ is due to Gruenhage, \cite{Gru84}, while the other two equivalences are due to Garcia, Orihuela and Oncina, \cite{GOO04}.  The first two authors gave in \cite{FG} a systematic and uniform development of the theory of those compact spaces $X$ such that $X^2\setminus \Delta$ is $P$-metacompact (\emph{$P$-Eberlein compact}), giving alternative characterizations in terms of almost subbases, bases, networks and point networks. 

A key result from \cite{FG} is that  a $P$-Eberlein compact space is Corson compact (embeds in a $\Sigma$-product of lines) if $P$ has calibre $(\omega_1,\omega)$ (every uncountable subset of $P$ contains an infinite subset with an upper bound), but if $P$ is not calibre $(\omega_1,\omega)$ then \emph{every} compact space of weight no more than $\omega_1$ is $P$-Eberlein compact. This demonstrates the critical role that calibre~$(\omega_1,\omega)$ plays. However Gul'ko (and so Talagrand and Eberlein) compacta have pleasant properties that general Corson compacta do not (for example, ccc Gul'ko compacta are metrizable in ZFC). The proofs  use special properties of directed sets of the form $\K(M)$. The authors would like to know what --- if anything --- is special about directed sets of the form $\K(M)$ as compared to general directed sets with calibre $(\omega_1,\omega)$.

Here we show (Theorems~\ref{main_result} and~\ref{main_conv}) that a pseudocompact space $X$ such that $X^2\setminus\Delta$ is $P$-paracompact for some $P$ with calibre $(\omega_1,\omega)$ is metrizable. However, if $P$ is not calibre $(\omega_1,\omega)$ then there is a compact non-metrizable space $X$ such that $X^2\setminus\Delta$ is  $P$-paracompact. This gives an `optimal' extension of  Gruenhage's result, \cite{Gru84}, that a compact space $X$ with paracompact $X^2\setminus\Delta$ is metrizable. We also show that a separable space is Lindel\"{o}f if  $P$-paracompact, and metrizable if $P$-metrizable, for some $P$ with calibre $(\omega_1,\omega)$.

However developing further results about $P$-paracompact and $P$-metrizable spaces apparently needs $P$ to be a $\K(M)$. These results depend on alternative characterizations which do not (overtly) refer to a directed set. For example, we show (Theorem~\ref{P_metric}) that a space is $\K(M)$-metrizable for some separable metrizable $M$ if and only if it has a weakly $\sigma$-locally finite base (a base $\mathcal{B}=\bigcup_n \mathcal{B}_n$ where for each point $x$ we have $\mathcal{B}=\bigcup \{ \mathcal{B}_s : x \text{ is locally finite in } \mathcal{B}_s\}$). With these in place we show that a first countable space is paracompact if and only if it is $\K(M)$-paracompact, normal and countably paracompact (Theorem~\ref{char_pcpt}); $\K(M)$-paracompact normal Moore spaces are metrizable (Theorem~\ref{NM}); first countable $\K(M)$-paracompact ccc spaces are Lindel\"{o}f; and $\K(M)$-metrizable ccc spaces are metrizable (Theorem~\ref{kmccc}). We also construct  examples  distinguishing the relevant covering properties, and demonstrating the necessity of additional hypotheses. In particular, we are able to show that in some cases we cannot replace `$P=\K(M)$' with a general `$P$ with calibre $(\omega_1,\omega)$'.

\section{Preliminaries}

\subsection{Directed Sets}

A \emph{directed set} is a partially ordered set such that any two elements have an upper bound. If $p$ is an element of a directed set $P$ we write $\down{p}$ for the set $\{ p' \in P : p' \le p\}$. The \emph{cofinality}, $\cof{P}$, of a directed set $P$ is the minimal size of a cofinal set in $P$. A directed set is \emph{Dedekind complete} if every subset with an upper bound has a least upper bound, and is \emph{countably directed} if every countable subset has an upper bound. For any space $X$ the set $\K(X)$ of all compact subsets of $X$ (including the empty set) is a Dedekind complete directed set when ordered by inclusion. For a separable metrizable $M$, we have  $\cof{\K(M)} \le |\K(M)| \le \ctm$. Other examples of Dedekind complete directed sets include $[S]^{<\omega}$ and $[S]^{\le \omega}$ of all finite (respectively, countable) subsets of a set $S$, again ordered by set inclusion.

A directed set $P$ has \emph{calibre $(\omega_1,\omega)$} if each uncountable subset must contain an infinite subset with an upper bound.  
\begin{lemma}\label{when_calom1om}
For any separable metrizable space $M$, and any set $S$, $\K(M)$ and $[S]^{\le \omega}$ have calibre $(\omega_1,\omega)$, but $[S]^{<\omega}$ only has calibre $(\omega_1,\omega)$ if $S$ is countable.
\end{lemma}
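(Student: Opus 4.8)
The plan is to treat the three directed sets separately, in increasing order of difficulty. For $[S]^{\le\omega}$ the point is that it is \emph{countably directed}: any countable subfamily of $[S]^{\le\omega}$ is bounded by its union, which is a countable union of countable sets, hence countable. So given an uncountable $\mathcal{A}\subseteq[S]^{\le\omega}$, I would simply take any countably infinite $\mathcal{A}_0\subseteq\mathcal{A}$, and then $\bigcup\mathcal{A}_0\in[S]^{\le\omega}$ bounds $\mathcal{A}_0$. For $[S]^{<\omega}$: if $S$ is countable then $[S]^{<\omega}$ is itself countable, so it has no uncountable subsets and the calibre condition holds vacuously; if $S$ is uncountable, take $\mathcal{A}=\{\{s\}:s\in S\}$, an uncountable subset of $[S]^{<\omega}$, and observe that any infinite $\mathcal{A}_0\subseteq\mathcal{A}$ is unbounded, since an upper bound for $\mathcal{A}_0$ would be a single \emph{finite} subset of $S$ containing the infinite set $\bigcup\mathcal{A}_0$.

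The substantive case is $\K(M)$. Here I would fix a compatible metric $d$ on the separable metrizable space $M$ and use the classical fact that the hyperspace $\K(M)$, with the Vietoris topology (equivalently, the Hausdorff metric $d_H$), is again separable metrizable. Let $\mathcal{A}\subseteq\K(M)$ be uncountable; discarding $\emptyset$ if necessary, we may assume every member is nonempty. As an uncountable subset of a second countable space, $\mathcal{A}$ has a condensation point $L\in\K(M)$, so we can choose distinct $K_n\in\mathcal{A}$ $(n\in\N)$ with $K_n\to L$ in $d_H$. I claim $C:=L\cup\bigcup_{n}K_n$ is compact; granting this, $C$ is a member of $\K(M)$ that bounds the infinite set $\{K_n:n\in\N\}\subseteq\mathcal{A}$, which is exactly what calibre $(\omega_1,\omega)$ demands. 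To prove the claim it suffices, as $C$ is metrizable, to check sequential compactness: given a sequence $(x_j)$ in $C$, if infinitely many terms lie in $L$ or in a single $K_n$ we extract a convergent subsequence from that compact set; otherwise we pass to a subsequence with $x_j\in K_{n_j}$ and $n_j$ strictly increasing, note that $d_H(K_{n_j},L)\to 0$ forces $d(x_j,L)\to 0$, pick $y_j\in L$ with $d(x_j,y_j)\to 0$, and use compactness of $L$ to find $y_{j_k}\to y\in L$, whence $x_{j_k}\to y\in C$.

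The only real obstacle is this last ``fusion'' step — that a Hausdorff-convergent sequence of compacta together with its limit has compact union — and the related subtlety that the limit $L$ must be an honest compact subset of $M$. This is precisely why I extract the condensation point inside $\K(M)$ itself rather than inside a metrizable compactification $\widehat{M}$ of $M$: in $\K(\widehat{M})$ one would still get a convergent sequence, but its limit could escape $M$, and then $C$ need not lie in $M$. The remaining ingredients (separable metrizability of the hyperspace, and the existence of condensation points of uncountable subsets of second countable spaces) are entirely standard.
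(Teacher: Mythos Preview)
Your proof is correct in all three parts. The paper states this lemma without proof, so there is nothing to compare against; your argument for $\K(M)$ via the separable metrizable hyperspace, extraction of a Hausdorff-convergent sequence from a condensation point, and the sequential-compactness verification that $L\cup\bigcup_n K_n$ is compact, is a standard and clean way to establish the only nontrivial case.
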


There is a more general notion of \emph{relative calibre $(\omega_1,\omega)$} for subsets of a poset: if $P'$ is a subset of a poset $P$, then $P'$ has relative calibre $(\omega_1,\omega)$ in $P$ if each uncountable subset of $P'$ contains an infinite subset with an upper bound in $P$.

A directed set $Q$ is a \emph{Tukey quotient} of a directed set $P$, and we write $P \ge_T Q$, if there is a map $\phi:P\to Q$ taking cofinal sets of $P$ to cofinal sets in $Q$.
   More generally, we can define \emph{relative Tukey quotients} as follows.  A subset $C$ of $P$ is said to be \emph{cofinal for a subset} $P'$ of $P$ if for every $p'\in P'$, there is some $c\in C$ such that $p'\le c$.  Then, if $P'\subseteq P$ and $Q' \subseteq Q$, we write $(P',P)\ge_T (Q',Q)$ if there is a map $\phi: P\to Q$  taking sets cofinal for $P'$ in $P$ to sets cofinal for $Q'$ in $Q$. If $P'=P$, then we may abbreviate this to $P\ge_T (Q',Q)$. See \cite{GMa} for the proof of the next lemma. The non-relative version is well-known. 
   \begin{lemma}\label{Ded}
If $(P',P) \ge_T (Q',Q)$ and $Q$ is Dedekind complete then there is an order-preserving map $\phi: P \to Q$ such that $\phi (P')$ is cofinal for $Q'$ in $Q$.
\end{lemma}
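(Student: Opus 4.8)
The plan is to take a (not necessarily order-preserving) witness $\psi : P \to Q$ for $(P',P)\ge_T(Q',Q)$ — so $\psi$ carries every set cofinal for $P'$ in $P$ to a set cofinal for $Q'$ in $Q$ — and to upgrade it to an order-preserving map by a supremum construction, exploiting the Dedekind completeness of $Q$ (this is the relative analogue of the standard ``replace a Tukey quotient into a Dedekind complete target by an order-preserving one'' argument).

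\medskip

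First I would extract from the defining property of $\psi$ the following ``eventual domination'' statement: for every $q\in Q'$ there is some $p_q\in P'$ such that $\psi(p')\ge q$ for all $p'\in P$ with $p'\ge p_q$. To see this, suppose it fails for some $q\in Q'$, and consider $A_q=\{p\in P:\psi(p)\not\ge q\}$. The failure says precisely that for every $p_0\in P'$ there is $p'\in A_q$ with $p'\ge p_0$, i.e.\ that $A_q$ is cofinal for $P'$ in $P$; hence $\psi(A_q)$ is cofinal for $Q'$ in $Q$, so, since $q\in Q'$, some $a\in A_q$ has $\psi(a)\ge q$, contradicting $a\in A_q$.

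\medskip

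Next, for each $p\in P$ I would set $S_p=\{q\in Q:\psi(p')\ge q\text{ for all }p'\ge p\text{ in }P\}$. Taking $p'=p$ shows $S_p\subseteq\down{\psi(p)}$, so $S_p$ is bounded above in $Q$; since $Q$ is Dedekind complete (and, applying completeness to $\emptyset$, has a least element, which handles $S_p=\emptyset$), we may define $\phi(p)=\sup S_p\in Q$. If $p_1\le p_2$ then $\{p':p'\ge p_2\}\subseteq\{p':p'\ge p_1\}$, so $S_{p_1}\subseteq S_{p_2}$ and $\phi(p_1)\le\phi(p_2)$; thus $\phi$ is order-preserving. Finally, for $q\in Q'$ the element $p_q\in P'$ produced above satisfies $q\in S_{p_q}$ by construction, whence $\phi(p_q)\ge q$; so $\phi(P')$ is cofinal for $Q'$ in $Q$, as required. (In fact $\phi$ order-preserving plus $\phi(P')$ cofinal for $Q'$ makes $\phi$ itself a witness for $(P',P)\ge_T(Q',Q)$, but the statement only demands the displayed conclusion.)

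\medskip

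The only real content is the first paragraph above — recognizing that the cofinal-to-cofinal property of $\psi$ is equivalent to the eventual-domination statement via the sets $A_q$ — together with the observation that each $S_p$ is automatically bounded by $\psi(p)$, which is exactly what licenses the use of Dedekind completeness; the order-preservation and the cofinality of $\phi(P')$ are then routine. The degenerate cases ($Q$ empty or a singleton, $Q'=\emptyset$) should be checked but are immediate.
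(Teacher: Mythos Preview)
Your proof is correct. Note, however, that the paper does not actually supply a proof of this lemma: it refers the reader to \cite{GMa} and remarks that the non-relative version is well known. Your argument is precisely the standard ``eventual domination plus supremum'' upgrade of a Tukey map into a Dedekind complete target, adapted to the relative setting, and this is almost certainly the intended proof. The key observation---that failure of eventual domination at $q\in Q'$ makes $A_q=\{p\in P:\psi(p)\not\ge q\}$ cofinal for $P'$ in $P$, contradicting the defining property of $\psi$---is exactly right, as is the verification that each $S_p$ is bounded above by $\psi(p)$ so that Dedekind completeness applies.
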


If $P \ge_T Q$ and $Q\ge_T P$, then we say $P$ and $Q$ are \emph{Tukey equivalent} and write $P=_T Q$.  This is an equivalence relation. We note that every countable directed set without a maximum is Tukey equivalent to $\N$.   Tukey equivalence was introduced by Tukey \cite{Tukey} in order to study the cofinal behavior of directed sets. It has since been intensively studied both in purely order theoretic terms and in the context of directed sets arising naturally in topology and analysis. 

Write $\M$ for the set of all separable metrizable spaces, and $\K(\M)$ for the set of Tukey equivalence classes, $[\K(M)]$, of $M$ from $\M$. Then $\K(\M)$ is a directed set under Tukey quotients.

Tukey quotients preserve (relative) calibres.
\begin{lemma} \label{tukey_cal}
If $(P',P)\ge_T (Q',Q)$ and $P'$ has relative calibre $(\omega_1,\omega)$ in $P$, then $Q'$ also has the same relative calibre in $Q$.
\end{lemma}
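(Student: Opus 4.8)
The plan is to argue directly from the definitions by transporting an uncountable ``bad'' subset of $Q'$ back to $P'$. Suppose $\phi\colon P\to Q$ witnesses $(P',P)\ge_T(Q',Q)$, so $\phi$ sends sets cofinal for $P'$ in $P$ to sets cofinal for $Q'$ in $Q$. Let $R\subseteq Q'$ be uncountable; we must find an infinite $R_0\subseteq R$ with an upper bound in $Q$. First I would replace $R$ by an uncountable subset $R'$ on which we have some control over $\phi$-preimages: for each $q\in R$ choose (if possible) a $p_q\in P$ with $\phi(p_q)\ge q$. The point to check is that such $p_q$ exists for all but countably many $q\in R$ — equivalently, that $\{q\in Q' : q\not\le \phi(p)\text{ for every }p\in P\}$ is ``small'' in the sense that it cannot contain an uncountable subset of $Q'$ with the property we are trying to contradict; in fact it is cleaner to observe that if $q\not\le\phi(p)$ for all $p$, then $\{q\}$ together with... — no, the clean route is different, see below.

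The cleaner approach: argue by contraposition on the contrapositive form of the Tukey condition. By Lemma~\ref{Ded}-style reasoning is not needed here since we are not assuming Dedekind completeness; instead, recall the standard fact that $\phi$ witnesses a (relative) Tukey quotient iff there is a map $\psi\colon Q\to P$ (a ``Tukey morphism'' in the other direction) such that $\psi(A)$ unbounded-for... Actually the robust and shortest argument is: the condition ``$\phi$ takes sets cofinal for $P'$ to sets cofinal for $Q'$'' is equivalent to ``for every $q\in Q'$ there is $p\in P$ such that whenever $\phi(p)\le\phi(p')$... '' — rather than chase equivalences, I will use the standard dual: $(P',P)\ge_T(Q',Q)$ via $\phi$ implies there is $\psi\colon Q'\to P$ such that for all $p\in P$ and $q\in Q'$, if $\psi(q)\le p$ then $q\le\phi(p)$. (This $\psi$ is obtained by sending $q$ to any $p$ with the property that $\phi$ being cofinal forces $q\le\phi(p)$; its existence is exactly the contrapositive of the cofinal-to-cofinal condition, because if no such $p$ existed then $P\setminus\psi^{-1}(\cdot)$ would be cofinal for $P'$ yet its image would miss an up-set of $q$, contradicting cofinality for $Q'$.)

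Granting such a $\psi\colon Q'\to P$, the proof finishes quickly. Let $R\subseteq Q'$ be uncountable. Then $\psi(R)\subseteq P'$? — here I need $\psi$ to land in $P'$; I would arrange this by noting $P'$ is cofinal for itself so the construction of $\psi$ can be made to take values in $P'$, OR simply observe that $\psi(R)$ has an upper bound-seeking subset inside $P$ and pull back. Concretely: $\psi(R)$ is a subset of $P$; if it is uncountable, apply relative calibre $(\omega_1,\omega)$ of $P'$ in $P$ after first thinning $R$ so that $\psi(R)\subseteq P'$ (possible since we may choose $\psi(q)\in P'$, as $P'$ is cofinal for $P'$). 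If $\psi(R)$ is countable, then some single value $p_0=\psi(q)$ is attained for uncountably many $q\in R$; pick any infinite such family $R_0$, let $p_1\ge p_0$ be... then $\psi(q)=p_0\le p_1$ gives $q\le\phi(p_1)$ for all $q\in R_0$, so $\phi(p_1)$ is an upper bound — done, even with one point. In the uncountable case, relative calibre $(\omega_1,\omega)$ of $P'$ in $P$ gives an infinite $\psi(R_0)$ with an upper bound $p_1\in P$; then for each $q\in R_0$, $\psi(q)\le p_1$, hence $q\le\phi(p_1)$, so $R_0$ is infinite with upper bound $\phi(p_1)\in Q$. The main obstacle is purely bookkeeping: establishing the dual map $\psi$ with values in $P'$ and verifying the ``$\psi(q)\le p\Rightarrow q\le\phi(p)$'' property from the cofinal-to-cofinal hypothesis; once that is in hand the calibre transfer is immediate from the two-case split above.
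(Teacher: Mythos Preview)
The paper states this lemma without proof, so there is nothing to compare against; I can only assess your argument on its own.

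Your overall strategy is the right (and standard) one: produce a dual map $\psi\colon Q'\to P'$ satisfying $\psi(q)\le p\Rightarrow q\le\phi(p)$, then split on whether $\psi(R)$ is countable. Once $\psi$ is in hand your two-case split is correct and the proof goes through exactly as you describe.

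The one genuine soft spot is the construction of $\psi$ and, specifically, why it can be taken to land in $P'$ rather than merely in $P$. Your parenthetical justification (``if no such $p$ existed then $P\setminus\psi^{-1}(\cdot)$ would be cofinal for $P'$\dots'') is pointing at the right idea but is garbled, and your later fix (``$P'$ is cofinal for itself so the construction of $\psi$ can be made to take values in $P'$'') does not by itself give the implication $\psi(q)\le p\Rightarrow q\le\phi(p)$; it only gives $q\le\phi(\psi(q))$. The clean version is this: for $q\in Q'$ set $C_q=\{p\in P: q\not\le\phi(p)\}$. Then $\phi(C_q)$ is \emph{not} cofinal for $Q'$ in $Q$ (it fails to bound $q$), so by the defining property of $\phi$, $C_q$ is not cofinal for $P'$ in $P$. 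Unpacking ``not cofinal for $P'$'' gives an element $\psi(q)\in P'$ such that no member of $C_q$ lies above $\psi(q)$; equivalently, $\psi(q)\le p\Rightarrow p\notin C_q\Rightarrow q\le\phi(p)$. This is exactly the $\psi\colon Q'\to P'$ you need, and now your Case~1/Case~2 argument finishes the proof without further adjustment.

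So: correct approach, correct endgame, but the exposition should be rewritten to give the one-line construction of $\psi$ above in place of the several false starts.
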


Let us connect our two types of standard directed sets. A subset of $\R$ is \emph{totally imperfect} if every compact subset is countable. Bernstein sets are examples of totally imperfect subsets of the reals which have size $\ctm$.
\begin{lemma} \label{bernstein}
If $B\subseteq\R$ is a totally imperfect subset of $\R$ with size $\ctm$ and $S$ is any set with size $|S|\le\ctm$, then $\K(B) \ge_T [S]^{\le\omega}$.
\end{lemma}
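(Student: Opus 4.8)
The plan is to produce an order-preserving map $\phi : \K(B)\to [S]^{\le\omega}$ whose image is cofinal in $[S]^{\le\omega}$. Such a $\phi$ is automatically a Tukey quotient: if $\mathcal{C}$ is cofinal in $\K(B)$ and $A\in [S]^{\le\omega}$, choose $K$ with $A\subseteq\phi(K)$ (using cofinality of the image) and then $K'\in\mathcal{C}$ with $K\subseteq K'$, so $A\subseteq\phi(K)\subseteq\phi(K')\in\phi(\mathcal{C})$. Hence it suffices to build such a $\phi$, and this will give $\K(B)\ge_T[S]^{\le\omega}$.

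To build $\phi$, first observe that the cofinality of $[S]^{\le\omega}$ is at most $|S|^{\omega}\le\ctm^{\omega}=\ctm$, so we may fix a family $\{A_\xi:\xi<\ctm\}$ of countable subsets of $S$ which is cofinal in $[S]^{\le\omega}$ (with repetitions allowed, so this is harmless even when $S$ is small). Since $|B|=\ctm$, fix also a bijection $\iota:\ctm\to B$. For a compact $K\subseteq B$ put
\[
\phi(K)=\bigcup\{A_{\iota^{-1}(b)}:b\in K\}.
\]
Because $B$ is totally imperfect, $K$ is countable, so $\phi(K)$ is a countable union of countable subsets of $S$ and therefore lies in $[S]^{\le\omega}$; plainly $K\subseteq K'$ forces $\phi(K)\subseteq\phi(K')$. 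For cofinality of the image, given $A\in [S]^{\le\omega}$ choose $\xi<\ctm$ with $A\subseteq A_\xi$; then $\phi(\{\iota(\xi)\})=A_\xi\supseteq A$. Thus $\phi$ is order-preserving with cofinal image, completing the argument.

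The one subtle point — and the only place total imperfectness is used — is keeping $\phi(K)$ countable. The naive choice $\phi(K)=K$ does not yield a cofinal image, since a countable $A\subseteq B$ need not be contained in any compact subset of $B$ (its closure may escape $B$). The remedy is to decouple $\phi(K)$ from $K$ as a set entirely: one spreads a cofinal family of $[S]^{\le\omega}$ across the $\ctm$ points of $B$ via the bijection $\iota$, and then recombines it along $K$. Once one sees that $\phi(K)$ may bear no set-theoretic relation to $K$ itself, there is no further obstacle; total imperfectness is exactly what guarantees the recombined union stays countable.
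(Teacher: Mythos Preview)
Your proof is correct and essentially identical to the paper's: the paper fixes a surjection $f:B\to[S]^{\le\omega}$ (using $|[S]^{\le\omega}|\le\ctm$) and sets $\phi(K)=\bigcup\{f(x):x\in K\}$, which is exactly your map once one identifies $f$ with $b\mapsto A_{\iota^{-1}(b)}$. The only cosmetic difference is that the paper's $f$ hits all of $[S]^{\le\omega}$ while your indexed family is merely cofinal, but either suffices.
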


\begin{proof} 
Since $|S|\le\ctm$ we also have $|[S]^{\le \omega}| \le \ctm$, so fix a surjection $f:B\to [S]^{\le \omega}$.  Define $\phi:\K(B)\to [S]^{\le\omega}$  by $\phi(K) =  \bigcup \{f(x) : x\in K\}$. This is a well-defined map into $[S]^{\le\omega}$ since each compact subset of $B$ is countable.  Clearly $\phi$ is order-preserving and surjective,  and hence a Tukey quotient map.
\end{proof}

The next two results are from a paper \cite{GMa} by the second author and Mamatelashvili. The first says when subsets of $\K(\M)$ have upper bounds, it implies that $\K(\M)$ is countably directed. The second asserts the existence of an antichain of maximal possible size in $\K(\M)$. 

\begin{theorem}[\cite{GMa}] \label{km_bounded}
Let $\{M_\alpha : \alpha < \kappa\}$ be a family of separable metrizable spaces.

(i) If $\kappa \le \ctm$ then there is a separable metrizable space $M$ such that for all $\alpha$ we have $\K(M) \ge_T \K(M_\alpha)$.

(ii) If $\kappa > \ctm$ and the $M_\alpha$'s are all distinct subsets of a given separable metrizable space (or pairwise non-homeomorphic) then for any separable metrizable space $M$ there is an $\alpha$ such that $\K(M) \not\ge_T (M_\alpha, \K(M_\alpha))$.
\end{theorem}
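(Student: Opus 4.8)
The plan is to prove part~(i) by an explicit construction and part~(ii) by reducing it to a cardinality bound.

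\textbf{Part (i).} Embed each $M_\alpha$ into the Hilbert cube $Q=[0,1]^{\omega}$ and, using $\kappa\le\ctm=|Q|$, fix distinct points $z_\alpha\in Q$ for $\alpha<\kappa$; put $M=\bigcup_{\alpha<\kappa}\bigl(M_\alpha\times\{z_\alpha\}\bigr)\subseteq Q\times Q$, which is separable metrizable as a subspace of one. Since the $z_\alpha$ are distinct, $M_\alpha\times\{z_\alpha\}=M\cap\bigl(Q\times\{z_\alpha\}\bigr)$ is closed in $M$ and homeomorphic to $M_\alpha$. It then suffices to note that whenever $A$ is closed in a space $M$, the assignment $K\mapsto K\cap A$ is an order-preserving map $\K(M)\to\K(A)$ carrying cofinal sets to cofinal sets --- given $L\in\K(A)\subseteq\K(M)$ and a cofinal $K\supseteq L$ we get $L=L\cap A\subseteq K\cap A$ --- so $\K(M)\tr\K(A)$; applied to $A=M_\alpha\times\{z_\alpha\}$ this yields $\K(M)\tr\K(M_\alpha)$ for every $\alpha$.

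\textbf{Part (ii): reductions.} First reduce the ``pairwise non-homeomorphic'' hypothesis to the ``distinct subsets'' one by embedding each $M_\alpha$ in $N:=\R^{\omega}$: distinct homeomorphism types yield distinct subsets of $N$, and $\K(M)\tr(M_\alpha,\K(M_\alpha))$ depends only on the homeomorphism type of $M_\alpha$. So it suffices to show that for any separable metrizable $M$ and $N$ the family $\Sigma(M,N)=\{A\subseteq N:\K(M)\tr(A,\K(A))\}$ has size at most $\ctm$; then $\kappa>\ctm$ is impossible. By Lemma~\ref{Ded} (since $\K(A)$ is Dedekind complete), $A\in\Sigma(M,N)$ exactly when there is an \emph{order-preserving} $\phi\colon\K(M)\to\K(A)$ with $\bigcup\phi[\K(M)]=A$; conversely, every order-preserving $\phi\colon\K(M)\to\K(N)$ has all its values compact in $A_\phi:=\bigcup\phi[\K(M)]$ and so witnesses $A_\phi\in\Sigma(M,N)$. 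Thus $\Sigma(M,N)=\bigl\{\,\bigcup\phi[\K(M)]:\phi\colon\K(M)\to\K(N)\text{ order-preserving}\,\bigr\}$, and the task is to bound the number of distinct range-unions by $\ctm$.

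\textbf{Part (ii): the bound.} A crude count fails --- there may be $2^{\ctm}$ order-preserving maps $\K(M)\to\K(N)$ --- and the content is that only $\le\ctm$ of their range-unions are distinct. The two extremes show the mechanism. If $M$ is $\sigma$-compact, running $\phi$ along a countable cofinal chain in $\K(M)$ displays $A$ as a $\sigma$-compact subset of $N$, and $N$ has only $\le\ctm$ of those. If $M$ is Polish, then $\K(M)$ Tukey-reduces to $\omega^{\omega}$ (classical), so by Lemma~\ref{Ded} $A$ is the range-union of a monotone compact-valued map on $\omega^{\omega}$; regularizing this compact cover realizes $A$ as a $\mathbb K$-analytic, hence analytic, subset of $N$, of which there are $\le\ctm$. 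For a general, possibly non-Polish, $M$ the plan is to interpolate: fix a countable base $\mathcal B$ of $M$ and code each compact $K\subseteq M$ by a decreasing sequence of unions of members of $\mathcal B$, so that the Vietoris space $\K(M)$ is a continuous image of a subspace $T\subseteq 2^{\omega}$; after upgrading $\phi$ to an upper semicontinuous compact-valued map, compose with this coding to obtain an upper semicontinuous $\Phi\colon T\to\K(N)$ with $\bigcup\Phi[T]=A$, whose closed graph in $T\times N\subseteq 2^{\omega}\times N$ projects continuously onto $A$. Since a separable metrizable space has only $\le\ctm$ closed subspaces and admits only $\le\ctm$ continuous maps into $N$, this gives $|\Sigma(M,N)|\le\ctm$.

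\textbf{The main obstacle} is the upgrade of the order-preserving map from Lemma~\ref{Ded} to an upper semicontinuous one. Lemma~\ref{Ded} supplies only monotonicity, and the naive upper regularization on $\K(M)$ need not be compact-valued when $M$ is far from $\sigma$-compact, since Vietoris neighbourhoods of a compact set contain much larger compact sets; nor is the cofinal structure of $\K(M)$ literally $\omega^{\omega}$ in that case. So the real work is to show that every $A\in\Sigma(M,N)$ is ``$M$-Suslin'', i.e.\ obtained from $M$ and $N$ by a Suslin scheme with only countable branching, and it is precisely the second countability (separability) of $M$ that should make this reduction go through; the $\sigma$-compact and Polish cases and the cardinality estimates are then routine bookkeeping.
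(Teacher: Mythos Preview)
The paper does not prove this theorem; it is quoted from \cite{GMa} and used as a black box. So there is no ``paper's own proof'' to compare against, and I assess your proposal on its merits.

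\textbf{Part (i)} is correct and complete. The construction of $M\subseteq Q\times Q$ and the observation that $K\mapsto K\cap A$ is a Tukey quotient onto $\K(A)$ for closed $A\subseteq M$ are exactly what is needed.

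\textbf{Part (ii)} has a genuine gap, and you have identified it yourself. Your reduction is sound: it does suffice to show $|\Sigma(M,N)|\le\ctm$, and your description of $\Sigma(M,N)$ as the set of range-unions of order-preserving maps $\K(M)\to\K(N)$ is correct. The treatment of the $\sigma$-compact case is fine, and the Polish case is essentially right (with $\K(M)=_T\omega^\omega$, range-unions of monotone compact covers indexed by $\omega^\omega$ can indeed be regularized to analytic sets). But the general case --- $M$ neither $\sigma$-compact nor Polish --- is exactly where the substance lies, and here you offer only a plan, not an argument. The sentence ``upgrade $\phi$ to an upper semicontinuous compact-valued map'' hides the entire difficulty: Lemma~\ref{Ded} gives you monotonicity and nothing more, and there is no general mechanism for converting a monotone map $\K(M)\to\K(N)$ into an usco one without changing the range-union. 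Your coding of $\K(M)$ through a countable base yields a surjection from some $T\subseteq 2^\omega$ onto $\K(M)$, but composing with $\phi$ gives a map $T\to\K(N)$ that is neither monotone (the coding need not respect inclusion) nor continuous, so the closed-graph argument does not start. The final paragraph concedes this: ``the real work is to show that every $A\in\Sigma(M,N)$ is `$M$-Suslin'\,'' is a restatement of the goal, not a step toward it.

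In short: your Part~(i) is a proof; your Part~(ii) is a correct reformulation together with the two easy extreme cases, but the heart of the argument is missing.
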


\begin{theorem}[\cite{GMa}]\label{bigantichain}
There is a $2^\ctm$-sized family $\mathcal{A}$ of separable metrizable spaces such that if $A$ and $A'$ are distinct elements of $\mathcal{A}$ then $\K(A) \not\ge_T (A',\K(A'))$ and $\K(A') \not\ge_T (A,\K(A))$.
\end{theorem}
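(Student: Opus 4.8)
The size $2^{\ctm}$ is the most one could hope for: every separable metrizable space embeds in the Hilbert cube $[0,1]^{\N}$, which has only $\ctm$ points, so there are at most $2^{\ctm}$ separable metrizable spaces up to homeomorphism. Hence the content is purely constructive, and the plan is to realize $\mathcal{A}$ as a family of subspaces of a fixed Polish space indexed by a large \emph{independent family}. Concretely, fix a set $T$ with $|T|=\ctm$ and an independent family $\{Z_\xi : \xi < 2^{\ctm}\}$ of subsets of $T$ (such a family exists, by a classical theorem of Hausdorff); we then attach to each $\xi$ a separable metrizable space $A_\xi$ so that a relative Tukey quotient $\K(A_\xi) \ge_T (A_\eta,\K(A_\eta))$ is possible only when $Z_\xi$ and $Z_\eta$ coincide as sets, which by independence forces $\xi = \eta$.

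For the building blocks, fix pairwise disjoint totally imperfect subsets $\{R_t : t \in T\}$ of $\N^{\N}$, each of size $\ctm$, chosen so that $\bigcup_{t\in T} R_t$ is still separable metrizable, and set $A_\xi = \bigcup_{t \in Z_\xi} R_t$. Total imperfectness forces every compact subset of every $A_\xi$ to be countable, while Lemma~\ref{bernstein} gives $\K(R_t) \ge_T [S]^{\le\omega}$ for every $S$ with $|S|\le\ctm$, so each block is Tukey-large. The design goal is that the relative Tukey type of $(A_\xi,\K(A_\xi))$ should ``decompose along the blocks''---up to Tukey equivalence it ought to behave like a finite-support combination $\Sigma_{t\in Z_\xi}\K(R_t)$, so that it faithfully encodes $Z_\xi$. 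Arranging this cleanly is delicate, since a compact subset of a separable metric space that is a disjoint union of uncountably many uncountable pieces need not remain within finitely many pieces; one therefore has to place the $R_t$ carefully (for instance so that no infinite subfamily of blocks accumulates onto a single block), and I expect this to be a substantial part of the construction.

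Granting such a decomposition, suppose toward a contradiction that $\K(A_\xi) \ge_T (A_\eta,\K(A_\eta))$ for some $\xi \ne \eta$. By independence $Z_\eta \setminus Z_\xi$ is nonempty (indeed of size $\ctm$), so fix $t_0 \in Z_\eta\setminus Z_\xi$; then $R_{t_0} \subseteq A_\eta$ while $R_{t_0}\cap A_\xi = \emptyset$. Since $\K(A_\eta)$ is Dedekind complete, Lemma~\ref{Ded} supplies an order-preserving $\phi : \K(A_\xi) \to \K(A_\eta)$ whose range is cofinal for the points of $A_\eta$; restricting to $R_{t_0}$, the countable sets $\phi(K)\cap R_{t_0}$ ($K\in\K(A_\xi)$) form a directed family covering $R_{t_0}$, and reading off which blocks of $A_\xi$ each $K$ meets should exhibit $\K(A_\xi)\ge_T (R_{t_0},\K(R_{t_0}))$. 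The \emph{main obstacle} is to turn this into a genuine contradiction --- that is, to prove a ``no conspiracy'' lemma to the effect that a disjoint union of blocks cannot yield a relative Tukey quotient onto a block that is not among them. I would attack this by choosing the $R_t$ so that $\{[\K(R_t)] : t\in T\}$ is an antichain in $\K(\M)$ with the stronger property that no $[\K(R_t)]$ lies Tukey-below any countably-indexed combination of the others (using that $\K(\M)$ is countably directed, together with Theorem~\ref{km_bounded}(ii), to find enough such blocks), and then arguing, via Lemma~\ref{Ded} and Lemma~\ref{tukey_cal}, that any such $\phi$ must ``factor through'' countably many blocks of $A_\xi$ --- trapping, after a pressing-down argument on an $\omega_1$-sized subset of $R_{t_0}$, uncountably many points of $A_\eta$ inside a single countable compact set. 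Making precise both the block decomposition of $\K(A_\xi)$ and this factoring/pressing-down step is, I expect, where the real work of \cite{GMa} lies. Since $\xi\ne\eta$ were arbitrary, the symmetric argument also gives $\K(A_\eta)\not\ge_T(A_\xi,\K(A_\xi))$, so $\mathcal{A}=\{A_\xi : \xi<2^{\ctm}\}$ is the desired antichain.
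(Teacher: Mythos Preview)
The paper does not prove this theorem; it is quoted from \cite{GMa} without argument, so there is no in-paper proof to compare against and your proposal must stand on its own.

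Your high-level strategy---encode a size-$2^{\ctm}$ independent family into subspaces assembled from $\ctm$ many ``blocks'' $R_t$---is a reasonable template, and you are candid that the substance is not yet there. Two points deserve emphasis.

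First, a concrete slip: you assert that ``total imperfectness forces every compact subset of every $A_\xi$ to be countable,'' but you only required each individual $R_t$ to be totally imperfect, not their unions. A union of totally imperfect sets can easily contain a Cantor set (the rationals and the irrationals in $[0,1]$, for instance). This is repairable---place all the $R_t$ inside a single Bernstein set---but as written the claim is unjustified.

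Second, and more seriously, the load-bearing step is the ``no conspiracy'' lemma, and your sketch of it defers the difficulty rather than resolving it. You propose to choose the $R_t$ so that $\{[\K(R_t)]:t\in T\}$ is already a $\ctm$-sized antichain with the strong property that no block lies Tukey-below any countable combination of the others. Producing such a family is itself a nontrivial construction of essentially the same flavor, and Theorem~\ref{km_bounded}(ii) does not hand you one: it is a cardinality obstruction (no single $\K(M)$ dominates more than $\ctm$ targets), not an antichain builder. Even granting such blocks, the ``factoring through countably many blocks'' claim is asserted, not argued: a compact subset of $A_\xi$ can meet infinitely many $R_t$, and nothing in your outline prevents the order-preserving $\phi$ from distributing the information about $R_{t_0}$ across all blocks of $A_\xi$ at once, so the pressing-down gesture has no traction. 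You have correctly located where the work lies, but what remains is an architecture with the structural members labeled ``see \cite{GMa}''; that is an outline, not a proof.
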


The following result is from \cite{GM}, by the second two authors.

\begin{theorem}[\cite{GM}]\label{Sigma}
Let $\{M_\alpha : \alpha < \kappa\}$ be a family of separable metrizable spaces.
Then the $\Sigma$-product, $\Sigma \K(M_\alpha)$, taken with base point $\mathbf{0}=(\emptyset)_\alpha$ and considered with the product order, has calibre $(\omega_1,\omega)$. 
\end{theorem}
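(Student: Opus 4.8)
The plan is to combine two facts. First: a \emph{countable} product of directed sets of the form $\K(M)$ is again of that form. Indeed, if $M_0,M_1,\dots$ are separable metrizable then so is $\prod_n M_n$, and the order-preserving maps $(K_n)_n \mapsto \prod_n K_n$ and $L \mapsto (\pi_n(L))_n$ establish $\prod_n \K(M_n) =_T \K(\prod_n M_n)$; hence every countable---and \emph{a fortiori} every finite---subproduct of the $\K(M_\alpha)$ has calibre $(\omega_1,\omega)$, by Lemmas~\ref{when_calom1om} and~\ref{tukey_cal}. Second: an elementary submodel argument organises the supports of an uncountable subset of the $\Sigma$-product so that, after thinning, the situation is controlled by one such countable subproduct. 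Two harmless reductions come first. An $\aleph_1$-sized subfamily of $\Sigma\K(M_\alpha)$ meets only $\le\aleph_1$ coordinates, and an upper bound for a subfamily computed on those coordinates extends by $\emptyset$ to an upper bound in the full product, so we may assume the index set is $\omega_1$ (and if it is countable the whole family already lies in a countable subproduct, so we are done by the first fact); and we may assume the members of our uncountable family are distinct, since a value repeated uncountably often bounds infinitely many of them. (Via Theorem~\ref{km_bounded}(i), Lemma~\ref{Ded}, and the fact that a $\Sigma$-product of order-preserving Tukey quotient maps, normalised to preserve base points, induces a Tukey quotient of the corresponding $\Sigma$-products, one may even reduce to $M_\alpha=M$ for a single separable metrizable $M$, but this is not needed.)

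As a warm-up the first fact already settles the case of an uncountable family $\{p_\xi:\xi<\omega_1\}$ all of whose supports $S_\xi$ are finite. The $\Delta$-system lemma (for finite sets, a theorem of ZFC) gives an uncountable $I$ and a finite root $R$ with $S_\xi\cap S_\eta=R$ for distinct $\xi,\eta\in I$; the assignment $\xi\mapsto p_\xi{\restriction}R$ maps $I$ into the finite product $\prod_{\alpha\in R}\K(M_\alpha)$, whose calibre $(\omega_1,\omega)$---or a pigeonhole, if the restricted images are only countably many---yields an infinite $J\subseteq I$ on which the $p_\xi{\restriction}R$ share an upper bound; outside $R$ each coordinate meets at most one $S_\xi$ with $\xi\in I$, so $\{p_\xi:\xi\in J\}$ has a common upper bound in $\Sigma\K(M_\alpha)$.

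For the general case take an uncountable $\{p_\xi:\xi<\omega_1\}\subseteq\Sigma_{\omega_1}\K(M_\alpha)$; discarding repetitions and, if uncountably many supports are finite, passing to the warm-up, we may assume every $S_\xi\subseteq\omega_1$ is countably infinite. Fix a continuous increasing chain $\langle N_\gamma:\gamma<\omega_1\rangle$ of countable elementary submodels of some large $H(\theta)$ with all the relevant parameters in $N_0$ and $\gamma\subseteq N_\gamma$ for each $\gamma$, and let $C=\{\gamma<\omega_1:N_\gamma\cap\omega_1=\gamma\}$, a club. For $\beta<\gamma$ with $\gamma\in C$ we have $p_\beta\in N_\gamma$, hence $S_\beta\in N_\gamma$ and, being countable, $S_\beta\subseteq N_\gamma$, so $\sup S_\beta<\gamma$. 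Consequently, along $C$ the ``new parts'' $S_\gamma\setminus\gamma$ are pairwise disjoint, and (with at most one exceptional stage per coordinate) a given coordinate $\alpha$ belongs to $S_\gamma$ only at stages $\gamma>\alpha$. One then recursively extracts $\gamma_0<\gamma_1<\cdots$ from $C$, at each stage applying the calibre $(\omega_1,\omega)$ of the countable subproduct over the coordinates committed so far to keep the partial upper bounds mutually compatible, and concludes that $\{p_{\gamma_n}:n<\omega\}$ has a common upper bound, whose support is the countable union $\bigcup_n S_{\gamma_n}$.

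The delicate step---and the main obstacle---is this last extraction: calibre $(\omega_1,\omega)$ ever only provides an \emph{infinite} bounded subset, never an uncountable one, so one cannot simply maintain an uncountable reservoir of candidates during the thinning; the behaviour of the family on the ``old'' coordinates (those below the current stage, which may still be met by cofinally many of the surviving $p_\gamma$) must be tamed by a more careful fusion that interleaves the submodel filtration with repeated use of the first fact. Finally, it is worth stressing that no purely combinatorial substitute using $\Delta$-systems is available here: an uncountable family of countably infinite sets need not contain even a three-element $\Delta$-subsystem---the family of proper infinite initial segments of $\omega_1$ is an example---so the elementary submodel scaffolding, which reduces even such chain-like configurations to a countable-subproduct problem, is genuinely required.
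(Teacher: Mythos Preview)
First, note that the paper does not prove Theorem~\ref{Sigma}: it is quoted from \cite{GM} without proof. So there is no ``paper's own proof'' to compare against; the question is simply whether your argument stands on its own.

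It does not. Your warm-up for finite supports is fine, and the elementary-submodel set-up is reasonable, but the proof stops precisely at the point where the real work begins. You write that one ``recursively extracts $\gamma_0<\gamma_1<\cdots$ from $C$, at each stage applying the calibre $(\omega_1,\omega)$ of the countable subproduct over the coordinates committed so far,'' and then immediately concede that this does not work: ``calibre $(\omega_1,\omega)$ ever only provides an \emph{infinite} bounded subset, never an uncountable one, so one cannot simply maintain an uncountable reservoir of candidates during the thinning.'' That concession is accurate, and nothing after it repairs the difficulty. Saying the old coordinates ``must be tamed by a more careful fusion that interleaves the submodel filtration with repeated use of the first fact'' is a description of a hope, not a construction. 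Concretely: at stage $n$ you have committed to a countable set $D_n$ of coordinates, and you need $\gamma_n\in C$ above $\gamma_{n-1}$ whose restriction $p_{\gamma_n}{\restriction}D_n$ is compatible with the bound built so far; but applying calibre $(\omega_1,\omega)$ to the projections onto $\prod_{\alpha\in D_n}\K(M_\alpha)$ may leave you with only countably many candidates, and nothing you have written explains why any of those survivors lies above $\gamma_{n-1}$, or how to iterate. The club structure on $C$ controls the \emph{new} parts $S_\gamma\setminus\gamma$ (they are pairwise disjoint), but gives you no handle on the \emph{old} parts $S_\gamma\cap\gamma$, which is exactly where the recursion has to be fed.

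In short: the two ``facts'' you isolate are correct and relevant, but the bridge between them---the extraction of the infinite bounded subfamily in the countable-support case---is asserted rather than built. As written this is a proof sketch with its central lemma missing.
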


\subsection{Topology}

\subsubsection{Absolutely Closed Covers of Subspaces}

For any space $Z$ denote by $\mathop{CL}(Z)$ the set of all closed subsets of $Z$. For any set $S$ we write $\mathbb{P}(S)$ for the set of all subsets of $S$.

\begin{lemma}\label{lf_sum_sm}
Let $Y$ be a metrizable space that is locally separable. Let $A$ be any subspace of $Y$.

(i) There is a directed set $P$ with calibre $(\omega_1,\omega)$ such that $P \ge_T (A, CL(Y) \cap \mathbb{P}(A))$.  Moreover, $P = \Sigma \K(M_\alpha)$ where each $M_\alpha$ is separable metrizable.

(ii) If the weight of $Y$ is $\le \ctm$, then we can take $P=\K(M)$ in (i) where $M$ is some separable metrizable space.
\end{lemma}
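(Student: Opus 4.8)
The plan is to decompose a locally separable metrizable $Y$ into separable pieces and handle each piece with a $\K(M)$-type directed set, then assemble the pieces using a $\Sigma$-product. First I would invoke the standard structure theorem for locally separable metrizable spaces: $Y$ is the topological sum $\bigoplus_{\alpha < \lambda} Y_\alpha$ of separable metrizable spaces $Y_\alpha$ (each clopen in $Y$). Then a closed subset $F$ of $Y$ contained in $A$ decomposes as $F = \bigsqcup_\alpha (F \cap Y_\alpha)$, where each $F \cap Y_\alpha$ is closed in $Y_\alpha$ and contained in $A \cap Y_\alpha$. So the problem reduces, coordinatewise, to the separable case, and the natural candidate for $P$ is a $\Sigma$-product $\Sigma_\alpha P_\alpha$ of directed sets $P_\alpha$ that each Tukey-dominate $(A \cap Y_\alpha, \, CL(Y_\alpha) \cap \mathbb{P}(A \cap Y_\alpha))$.

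For a single separable metrizable space $Z = Y_\alpha$ with subspace $B = A \cap Y_\alpha$, I would show $\K(M_\alpha) \ge_T (B, CL(Z) \cap \mathbb{P}(B))$ for a suitable separable metrizable $M_\alpha$. The key point is that a closed subset $F$ of $Z$ with $F \subseteq B$ is covered (relative to $B$) by a single closed set, and I want to build a Tukey map sending compact subsets of $M_\alpha$ to such $F$'s cofinally. The right choice is to let $M_\alpha$ be a metrizable space in which $B$ sits as a ``co-compact-like'' complement: concretely, take $M_\alpha$ to be a separable metrizable space containing $Z$ as a subspace such that $Z \setminus B$ is an $F_\sigma$ (or, dually, $B$ is $G_\delta$) in a completion --- more simply, work inside the completion $\widehat{Z}$ of $Z$ and observe $\{F \in CL(Z) : F \subseteq B\}$ is cofinally a copy of the compact subsets of $\widehat{Z} \setminus B$ missing $B$. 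Sending $K \in \K(\widehat{Z}\setminus B)$ to $\overline{K}^{\widehat Z} \cap Z$, which lies in $CL(Z)$, and checking this lands in $\mathbb{P}(B)$ requires the $K$'s to avoid $B$; a cleaner route is to fix a metric $d$ on $\widehat Z$, write $\widehat Z \setminus B = \bigcup_n C_n$ with each $C_n$ closed in $\widehat Z\setminus B$, and let $M_\alpha = \bigoplus_n C_n$, mapping $K \in \K(M_\alpha)$ to the closed set $\bigcup_n (\pi_n(K)$-neighborhood trimmed to $Z) \in CL(Z)\cap\mathbb P(B)$. The cofinality check is that every closed $F \subseteq B$ is contained in such an image because $F$, being closed in $Z$ and disjoint from $B^c$, has each $F \cap C_n$-analogue compact after suitable exhaustion --- here is where separability and metrizability are used to get a $\sigma$-compact exhaustion of the relevant pieces.

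With the coordinatewise quotients $\K(M_\alpha) \ge_T (A \cap Y_\alpha,\, CL(Y_\alpha)\cap\mathbb P(A\cap Y_\alpha))$ in hand, set $P = \Sigma_\alpha \K(M_\alpha)$ with base point $\mathbf 0 = (\emptyset)_\alpha$. By Theorem~\ref{Sigma}, $P$ has calibre $(\omega_1,\omega)$. The Tukey map $\Phi : P \to CL(Y)\cap\mathbb P(A)$ is the ``diagonal sum'': $\Phi((K_\alpha)_\alpha) = \bigsqcup_\alpha \phi_\alpha(K_\alpha)$, which is closed in $Y$ because only countably many coordinates are nonzero, each $\phi_\alpha(K_\alpha)$ is closed in the clopen piece $Y_\alpha$, and a set meeting each clopen $Y_\alpha$ in a closed set is closed in the sum (this last fact is exactly why the topological-sum decomposition, rather than an arbitrary cover, is needed). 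To see $\Phi$ is a Tukey quotient, take any $F \in CL(Y) \cap \mathbb P(A)$; then $F \cap Y_\alpha$ is dominated by some $\phi_\alpha(K_\alpha)$, and the element $(K_\alpha)_\alpha$ --- which has countable support since $F$ meets only countably many $Y_\alpha$ when... actually $F$ may meet uncountably many pieces, so instead I argue at the level of cofinal sets: given a set cofinal for $A$ in $CL(Y)\cap\mathbb P(A)$, pull it back coordinatewise. This is the step I expect to be the main obstacle --- reconciling ``$F$ can spread over uncountably many coordinates'' with ``$\Sigma$-products only allow countable support'' --- and the resolution is that one does not need a single element of $P$ above $F$, but only that $\Phi$ carries cofinal-for-$A$ sets to cofinal-for-$A$ sets, which follows because any such cofinal set, restricted to each $Y_\alpha$, gives a cofinal family there, and the $\Sigma$-product structure of $P$ suffices to witness this relativized notion.

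For part (ii), when $w(Y) \le \mathfrak c$, I would bound the number of pieces: $\lambda \le w(Y) \le \mathfrak c$, and each $M_\alpha$ is separable metrizable, so $\K(M_\alpha) \ge_T$ its own class and by Theorem~\ref{km_bounded}(i) (applied to the family $\{M_\alpha : \alpha < \lambda\}$ of size $\le \mathfrak c$) there is a single separable metrizable $M$ with $\K(M) \ge_T \K(M_\alpha)$ for all $\alpha$. Then $\K(M) \ge_T \Sigma_\alpha \K(M_\alpha) = P$ provided $\K(M)$ also dominates the $\Sigma$-product of $\lambda \le \mathfrak c$ many copies --- which again follows from countable directedness of $\K(\M)$ together with the fact that a $\Sigma$-product of $\le\mathfrak c$ many $\K(M_\alpha)$'s is Tukey below $\K(N)$ for a single separable metrizable $N$ (this is essentially Theorem~\ref{km_bounded}(i) combined with the observation that $\Sigma_{\alpha<\mathfrak c}\K(M_\alpha) =_T \K(\bigoplus_\alpha M_\alpha')$ for appropriate countable-support reindexing). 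Composing, $\K(M) \ge_T P \ge_T (A, CL(Y)\cap\mathbb P(A))$, giving (ii).
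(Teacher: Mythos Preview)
Your overall architecture --- decompose $Y$ into separable pieces, handle each piece with a $\K(M_\alpha)$, assemble via a $\Sigma$-product --- matches the paper's. But two steps are genuinely off.

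\textbf{The coordinatewise piece is much simpler than you think.} Your attempt to build $M_\alpha$ via completions, $F_\sigma$/$G_\delta$ gymnastics, and $\sigma$-compact exhaustions is unnecessary and, as written, does not produce a well-defined Tukey map. The paper simply takes $M_\alpha = A_\alpha := C_\alpha \cap A$ (where the $C_\alpha$ are the separable pieces). A compact $K \subseteq A_\alpha$ is automatically closed in $Y$ (compact in Hausdorff) and contained in $A$, so $K \mapsto K$ is already an order-preserving map $\K(A_\alpha) \to CL(Y)\cap\mathbb P(A)$. No completion, no exhaustion.

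\textbf{You misread the target of the relative Tukey relation.} The pair $(A, CL(Y)\cap\mathbb P(A))$ is shorthand for $([A]^1, CL(Y)\cap\mathbb P(A))$: an order-preserving map witnesses the relation as soon as its image \emph{covers $A$}, i.e., every singleton $\{a\}$ lies below some image point. You never need to dominate an arbitrary closed $F\subseteq A$, so your worry that ``$F$ can spread over uncountably many coordinates'' and the attempted fix via cofinal-set pullbacks are addressing a problem that does not exist. With the $\Sigma$-product map $(K_\alpha)_\alpha \mapsto \bigcup_\alpha K_\alpha$, each $a\in A$ is hit by the tuple with $\{a\}$ in one coordinate and $\emptyset$ elsewhere; done. (The paper uses a locally finite closed cover by separable sets rather than a clopen partition, but either works; the point is that a locally finite union of compacta is closed.)

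\textbf{Part (ii) has a real gap.} You try to pass through $\K(M) \ge_T \Sigma_\alpha \K(M_\alpha)$, appealing to a claimed Tukey equivalence $\Sigma_{\alpha<\mathfrak c}\K(M_\alpha) =_T \K(\bigoplus_\alpha M_\alpha')$. That equivalence is not established anywhere and is dubious: a $\Sigma$-product over $\mathfrak c$ coordinates need not be a Tukey quotient of a single $\K(M)$. The paper avoids this entirely. Once $w(Y)\le\mathfrak c$ there are at most $\mathfrak c$ pieces; by Theorem~\ref{km_bounded}(i) choose one $M$ and order-preserving surjective-onto-cofinal maps $\phi_\alpha:\K(M)\to\K(A_\alpha)$. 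Then define $\phi:\K(M)\to CL(Y)\cap\mathbb P(A)$ directly by $\phi(K)=\bigcup_\alpha \phi_\alpha(K)$. The union is closed in $Y$ because $\{\phi_\alpha(K):\alpha\}$ is a locally finite family of compact sets (local finiteness inherited from the cover $\{C_\alpha\}$), and the image covers $A$ since each $\phi_\alpha$ is cofinal. No detour through the $\Sigma$-product is needed.
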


\begin{proof} Using local separability, regularity and paracompactness of $Y$, we can find a closed locally finite cover $\mathcal{C} = \{C_\alpha : \alpha < w(Y)\}$ of $Y$ of separable sets. Define $A_\alpha = C_\alpha \cap A$.

For (i), define $P = \Sigma \K(A_\alpha)$, which has calibre $(\omega_1,\omega)$ by Lemma~\ref{Sigma}.  Any collection $\{K_\alpha : \alpha < w(Y)\}$ with $K_\alpha \in \K(A_\alpha)$ is locally finite in $Y$ since $\mathcal{C}$ is locally finite.  Therefore, $\bigcup_\alpha K_\alpha$ is closed in $Y$ and also a subset of $A$.  The map $\Sigma \K(A_\alpha) \to CL(Y) \cap \mathbb{P}(A)$ given by $(K_\alpha)_\alpha \mapsto \bigcup_\alpha K_\alpha$ is then order-preserving, and its image covers $A$.

For (ii), we use Theorem~\ref{km_bounded} to find a separable metrizable space $M$ and Tukey quotient maps $\phi_\alpha : \K(M) \to \K(A_\alpha)$ for each $\alpha < w(Y) \le \ctm$.  Then define $\phi : \K(M) \to CL(Y) \cap \mathbb{P}(A)$ by $\phi(K) = \bigcup_\alpha \phi_\alpha(K)$, which witnesses $\K(M) \ge_T (A, CL(Y) \cap \mathbb{P}(A))$.
\end{proof}

\begin{lemma}\label{YAforSigmaKA}
Let $\{A_\alpha : \alpha < \kappa\}$ be a family of separable metrizable spaces and define  $P = \Sigma \K(A_\alpha)$. Then there is a locally separable metrizable space $Y$ with a subspace $A$ homeomorphic to $\bigoplus \{ A_\alpha : \alpha < \kappa\}$ such that:

(i) $P \ge_T (A, CL(Y) \cap \mathbb{P}(A))$, and

(ii) if $Q$ is a directed set such that $Q \ge_T (A, CL(Y) \cap \mathbb{P}(A))$, then $Q \ge_T (A_\alpha,\K(A_\alpha))$ for each $\alpha < \kappa$.
\end{lemma}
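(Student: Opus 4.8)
The plan is to build $Y$ as a concrete locally separable metrizable space that "remembers" each $A_\alpha$ as a clopen piece, so that (i) follows from Lemma~\ref{lf_sum_sm}-style reasoning and (ii) follows because any closed cover of $A$ in $Y$ must, in particular, cover each $A_\alpha$ by sets that are closed in $Y$ and hence (since $A_\alpha$ is clopen in $Y$, or at least closed) closed in $A_\alpha$. Concretely, I would take $Y = \bigoplus \{A_\alpha : \alpha < \kappa\}$ itself, the topological sum, with $A = Y$ and the homeomorphism being the identity. Then $Y$ is metrizable (a disjoint sum of separable metrizable spaces) and locally separable (each point has a neighborhood inside some $A_\alpha$, which is separable). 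With this choice $CL(Y) \cap \mathbb{P}(A) = CL(Y)$, and a subset $C \subseteq Y$ is closed in $Y$ if and only if $C \cap A_\alpha$ is closed in $A_\alpha$ for every $\alpha$.

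For part (i): a family $\{K_\alpha : \alpha < \kappa\}$ with $K_\alpha \in \K(A_\alpha)$ has union $\bigcup_\alpha K_\alpha$ which meets each clopen summand $A_\alpha$ in the closed (indeed compact) set $K_\alpha$, hence is closed in $Y$; and since $Y$ is a disjoint sum, any compact $K_\alpha$ is contained in finitely many summands — wait, each $K_\alpha \subseteq A_\alpha$ lives in a single summand, so the union is automatically a well-defined closed subset of $Y$. The map $\Sigma\K(A_\alpha) \to CL(Y)$, $(K_\alpha)_\alpha \mapsto \bigcup_\alpha K_\alpha$, is order-preserving, and its image is cofinal for $A = Y$ in $CL(Y)$: given any point $y \in A_\alpha$, the singleton $\{y\}$ is compact in $A_\alpha$, so $y$ lies in the image of the tuple that is $\{y\}$ in coordinate $\alpha$ and $\emptyset$ elsewhere, which is an element of $\Sigma\K(A_\alpha)$. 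Thus $P = \Sigma\K(A_\alpha) \ge_T (A, CL(Y)\cap\mathbb{P}(A))$.

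For part (ii): suppose $Q \ge_T (A, CL(Y)\cap\mathbb{P}(A))$, witnessed by $\psi : Q \to CL(Y)$ carrying sets cofinal for $A$ to sets cofinal for $A$ in $CL(Y)$. Fix $\alpha < \kappa$ and define $\psi_\alpha : Q \to \K(A_\alpha)$. The natural guess is $\psi_\alpha(q) = \psi(q) \cap A_\alpha$, but that need not be compact, only closed in $A_\alpha$; so instead I would first pass, using Lemma~\ref{Ded} (Dedekind completeness of $CL(Y)$ under $\supseteq$, or rather the completeness needed there), to an order-preserving $\psi$ with image cofinal for $A$, and then observe that for a cofinal-for-$A$ set $D \subseteq Q$, the sets $\psi(q) \cap A_\alpha$ ($q \in D$) cover $A_\alpha$ and are closed in $A_\alpha$. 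To upgrade from closed covers of $A_\alpha$ to compact covers, I would use that $A_\alpha$ is separable metrizable together with the standard fact (or a short argument) that $(A_\alpha, CL(A_\alpha) \cap \mathbb{P}(A_\alpha)) \ge_T (A_\alpha, \K(A_\alpha))$ is false in general — so this is exactly where care is needed.

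The main obstacle is precisely this last point: a closed cover of $A_\alpha$ is not the same as a compact cover, so I cannot naively conclude $Q \ge_T (A_\alpha, \K(A_\alpha))$. The fix is to build $Y$ more cleverly: rather than the bare sum, realize each $A_\alpha$ as a closed subspace of a separable metric space $N_\alpha$ in which $A_\alpha$ is, say, a relatively discrete or suitably "spread out" copy so that every subset of $A_\alpha$ that is closed in $N_\alpha$ is compact — for instance embed $A_\alpha$ as a closed discrete subset is impossible if $A_\alpha$ is uncountable, so instead embed the \emph{underlying set} of $A_\alpha$ via a map into a compactification so that closed-in-$Y$ subsets of the copy of $A_\alpha$ correspond to compact subsets of $A_\alpha$. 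Concretely, take $N_\alpha$ to be a separable metrizable space containing a closed copy $A_\alpha'$ of $A_\alpha$ such that $CL(N_\alpha) \cap \mathbb{P}(A_\alpha') = \K(A_\alpha')$; one standard device is to let $N_\alpha = A_\alpha \times \{0\} \cup (\text{a convergent-sequence-like skeleton})$, or to use that $A_\alpha$ embeds as a closed subset of $A_\alpha \oplus (\omega^\omega\text{-like space})$ arranged so non-compact closed subsets of the copy escape to infinity. Then $Y = \bigoplus_\alpha N_\alpha$ with $A = \bigoplus_\alpha A_\alpha'$ works: part (i) goes through as before since $\K(A_\alpha') = \K(A_\alpha)$, and part (ii) now gives $\psi_\alpha(q) := \psi(q) \cap A_\alpha' \in \K(A_\alpha) $ directly, with $\{\psi_\alpha(q) : q \in D\}$ cofinal in $\K(A_\alpha)$ whenever $D$ is cofinal for $A$, yielding $Q \ge_T (A_\alpha, \K(A_\alpha))$. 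Getting the arrangement of $A_\alpha'$ inside $N_\alpha$ exactly right — so that it is genuinely closed, the space stays separable metrizable, and closed subsets of the copy are forced to be compact — is the one step I expect to require real work.
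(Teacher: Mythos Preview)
You correctly diagnose the obstacle: with $Y=\bigoplus_\alpha A_\alpha$ the intersection $\psi(q)\cap A_\alpha$ is only closed in $A_\alpha$, not compact, so part (ii) fails. You also correctly see that the fix is to embed each $A_\alpha$ inside a larger separable metrizable $N_\alpha$ so that $CL(N_\alpha)\cap\mathbb{P}(A_\alpha)=\K(A_\alpha)$. But you then impose the extra requirement that the copy of $A_\alpha$ be \emph{closed} in $N_\alpha$, and this is what sends you off into convergent-sequence skeletons and other complications.

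Drop that requirement. The condition $CL(N_\alpha)\cap\mathbb{P}(A_\alpha)=\K(A_\alpha)$ says exactly that every subset of $A_\alpha$ which is closed in $N_\alpha$ is compact; this holds automatically whenever $N_\alpha$ is \emph{compact}. So simply take $N_\alpha$ to be a copy of the Hilbert cube $I^{\mathbb N}$ and embed $A_\alpha$ in it as an arbitrary (not necessarily closed) subspace. With $Y=\bigoplus_\alpha I^{\mathbb N}$ and $A=\bigoplus_\alpha A_\alpha$, your part (i) argument goes through unchanged, and for part (ii) the map $\psi_\alpha(q)=\psi(q)\cap Y_\alpha$ lands in $\K(A_\alpha)$ because $\psi(q)\cap Y_\alpha$ is closed in the compact space $Y_\alpha$. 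This is precisely what the paper does. Note also that requiring $A_\alpha$ closed in a compact $N_\alpha$ would force $A_\alpha$ itself compact, which is why your two constraints were in tension.
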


\begin{proof} We may assume each $A_\alpha$ is a subspace of $Y_\alpha$ a copy of $I^\N$. 
Let $Y = \bigoplus \{Y_\alpha : \alpha < \kappa\}$.  Define $A = \bigoplus \{A_\alpha : \alpha < \kappa\}$.  Then the map $\phi : P \to CL(Y) \cap \mathbb{P}(A)$ given by $\phi((K_\alpha)_\alpha) = \bigoplus_\alpha K_\alpha$ is order-preserving, and its image covers $A$, which proves (i).

Now suppose $Q \ge_T (A, CL(Y) \cap \mathbb{P}(A))$ is witnessed by $\phi : Q \to CL(Y) \cap \mathbb{P}(A)$.  Note that if $C$ is a subset of $A$ that is closed in $Y$, then $C \cap Y_\alpha$ is a subset of $A_\alpha$ that is closed in $Y_\alpha$, and since $Y_\alpha$ is compact, then so is $C \cap Y_\alpha$.  Thus, we have a map $\phi_\alpha : Q \to  \K(A_\alpha)$ given by $\phi_\alpha(q) = \phi(q) \cap Y_\alpha$ that witnesses (ii).
\end{proof}

\subsubsection{Covering and Base Properties}

As defined in the Introduction, a space is $P$-paracompact, for some directed set $P$, if every open cover has a $P$-locally finite open refinement. More generally, if $\mathcal{P}$ is a class of directed sets then a space is \emph{$\mathcal{P}$-paracompact} if  every open cover $\mathcal{U}$ has a $P$-locally finite open refinement $\mathcal{V}$ for some $P$ in $\mathcal{P}$ (depending on $\mathcal{U}$). These are our main covering properties but our study of $\K(M)$-paracompactness and $\K(\M)$-paracompactness lead to additional covering properties outlined below. 

Let $\mathcal{C}$ be a collection of subsets of a space. If for some cardinal $\kappa$ and property $\mathbf{P}$ we can write  $\mathcal{C}=\bigcup \{\mathcal{C}_\alpha : \alpha < \kappa\}$ where each $\mathcal{C}_\alpha$ has $\mathbf{P}$ then we say $\mathcal{C}$ is \emph{$\kappa$-$\mathbf{P}$}. Following tradition we say $\sigma$-$\mathbf{P}$ instead of $\aleph_0$-$\mathbf{P}$.

A collection $\mathcal{C}$ of subsets of a space is \emph{relatively locally finite} if it is locally finite in its union (so for each point $x$ in $\bigcup \mathcal{C}$ there is an open neighborhood meeting only finitely many elements of $\mathcal{C}$).
Given a cardinal  $\kappa$, a space is called \emph{$\kappa$-(relatively) paracompact} if every open cover  has a $\kappa$-(relatively)  locally finite open refinement.

Recall that a space is \emph{screenable} if every open cover has a $\sigma$-disjoint open refinement. Clearly a space with a $\sigma$-disjoint base is screenable.  
Observe that a pairwise disjoint collection of open sets is relatively locally finite, while a relatively locally finite family is point-finite. Hence:
\begin{lemma}\label{sdissrp_srpisspf}
Every $\sigma$-disjoint family of open sets is $\sigma$-relatively locally finite.

Every $\sigma$-relatively locally finite family of open sets is $\sigma$-point finite.
\end{lemma}

Let $\mathcal{C}$ be a family of subsets of a space $X$. Call $\mathcal{C}$ \emph{$(\omega_1,\omega)$-point finite} (respectively, \emph{$(\omega_1,\omega)$-locally finite}) if  every uncountable subcollection of $\mathcal{C}$ contains an infinite subcollection which is point-finite (respectively, locally finite). A space is \emph{$(\omega_1,\omega)$-paracompact} (respectively, \emph{$(\omega_1,\omega)$-metacompact}) if every open cover  has a  $(\omega_1,\omega)$-locally finite (respectively, $(\omega_1,\omega)$-point finite) open refinement. 
A space is \emph{$(\omega_1,\omega)$-metrizable}  if it has a  $(\omega_1,\omega)$-locally finite base.

By analogy with the property `weakly $\sigma$-point finite' used in \cite{GOO04}, we call a family $\mathcal{C}$ of subsets of a space $X$ \emph{weakly $\sigma$-locally finite} if we can write $\mathcal{C} = \bigcup_n \mathcal{C}_n$ in such a way that:
\begin{equation*} \label{wslf}
\forall x\in X,\ \bigcup\{\mathcal{C}_n : \mathcal{C}_n \text{ is locally finite at } x\} = \mathcal{C}. \tag{$\star$}
\end{equation*}
A space is \emph{weakly $\sigma$-paracompact} if every open cover has a weakly $\sigma$-locally finite open refinement.

\subsubsection{An Additional $P$-Property}

An open subset $U$ of a space is \emph{$P$-regular} if it can be written: $U=\bigcup \{V_q : q \in Q\} = \bigcup \{\cl{V_q} : q \in Q\}$, where all the $V_q$ are open sets and if $q \le q'$ then $V_q \subseteq V_{q'}$, where $Q=P \times \N$. Call a space \emph{$P$-perfectly normal} if every open subset is $P$-regular. For a class $\mathcal{P}$ of directed sets call a space $\mathcal{P}$-perfectly normal  if for every  open subset $U$ there is a $P$ in $\mathcal{P}$ such that $U$  is $P$-regular.  Note that a space is $1$-perfectly normal if and only if it is $\N$-perfectly normal if and only if it is perfectly normal (in the usual sense).

\subsubsection{Special Spaces and Subsets of $\R$}

A space $Y$ is a \emph{$Q$-space} if every subset is a $G_\delta$-subset. 
A subset $A$ of the reals, $\R$, is a \emph{$Q$-set} if and only if it is uncountable and a $Q$-space. 

A space $Y$ is a \emph{$\Delta$}-space if whenever we write $Y$ as an increasing union of subsets, $Y=\bigcup_n S_n$ where $S_n \subseteq S_{n+1}$ for all $n$, there is a
countable closed cover, $\{C_n:n \in \omega\}$, of $Y$, such that $C_n \subseteq S_n$ for every $n$. A subset $A$ of $\R$ is a \emph{$\Delta$-set} if it is an uncountable $\Delta$-space. It is clear that every $Q$-space is a $\Delta$-space.

Let $Y$ be a space and $A$ a subspace. Then $A$ is \emph{relatively countably compact} if every subset of $A$ which is closed discrete in $Y$ is finite. Call $Y$ \emph{RCCC} if every relatively countably compact subset of $Y$ is  countable. We note that a metrizable space $Y$ is RCCC if and only if every compact subset of $Y$ is countable (in other words, $Y$ is totally imperfect).

\subsection{Basic Results}
\subsubsection{Results Valid For All $P$}

Let $\mathcal{C}$ be  family of subsets of a space $X$. Let $P$ be a directed set. We say that $\mathcal{C}$ is \emph{$P$-ordered} if it can be written $\mathcal{C}=\bigcup \{ \mathcal{C}_p :  p \in P\}$ where if $p \le p'$ then $\mathcal{C}_p \subseteq \mathcal{C}_{p'}$. We call the $\mathcal{C}_p$ the \emph{levels} of the $P$-ordering. So $\mathcal{C}$ is $P$-locally finite precisely when it is $P$-ordered with locally finite levels.

Define $LF(\mathcal{C}) = \{\mathcal{L}\subseteq \mathcal{C} : \mathcal{L} \text{ is locally finite in } X\}$. Then $LF(\mathcal{C)}$ is a  Dedekind complete directed set. In the next lemma and subsequently  we abbreviate $([\mathcal{C}]^1,LF(\mathcal{C}))$ (where $[\mathcal{C}]^1=\{ \{C\} : C \in \mathcal{C}\}$) to $(\mathcal{C},LF(\mathcal{C}))$. 

\begin{lemma}\label{PlfviaTukey}
Let $\mathcal{C}$ be  family of subsets of a space $X$. Let $P$ be a directed set. Then the following are equivalent: 

(i) $\mathcal{C}$ is $P$-locally finite, (ii) there is an order-preserving map $\phi : P \to LF(\mathcal{C})$ such that $\phi(P)$ is cofinal for $\mathcal{C}$ in $LF(\mathcal{C})$, (iii) $P \ge_T (\mathcal{C},LF(\mathcal{C}))$, (iv) $P \ge_T ([\mathcal{C}]^{<\omega},LF(\mathcal{C}))$.

Hence if $\mathcal{C}$ is $P$-locally finite and $Q \ge_T P$ then $\mathcal{C}$ is $Q$-locally finite.
\end{lemma}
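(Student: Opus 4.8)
The plan is to run the cycle of implications (i) $\Rightarrow$ (ii) $\Rightarrow$ (iii) $\Rightarrow$ (i) together with the two extra arrows (i) $\Rightarrow$ (iv) and (iv) $\Rightarrow$ (iii), and then read off the final ``hence'' from the fact that condition (iii) is monotone under Tukey quotients. Throughout I would unwind the abbreviation: a set $\mathcal{D}\subseteq LF(\mathcal{C})$ is cofinal for $[\mathcal{C}]^1$ in $LF(\mathcal{C})$ exactly when every $C\in\mathcal{C}$ belongs to some member of $\mathcal{D}$ (i.e.\ $\bigcup\mathcal{D}=\mathcal{C}$), and cofinal for $[\mathcal{C}]^{<\omega}$ when every finite $F\subseteq\mathcal{C}$ is contained in some member of $\mathcal{D}$.

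For (i) $\Rightarrow$ (ii): given a $P$-locally finite decomposition $\mathcal{C}=\bigcup\{\mathcal{C}_p:p\in P\}$, the assignment $\phi(p)=\mathcal{C}_p$ \emph{is} an order-preserving map $P\to LF(\mathcal{C})$ (each level is a locally finite subfamily of $\mathcal{C}$, hence a point of $LF(\mathcal{C})$), and $\phi(P)$ is cofinal for $[\mathcal{C}]^1$ since every $C\in\mathcal{C}$ lies in some level. For (ii) $\Rightarrow$ (iii): if $\phi$ is order-preserving with $\phi(P)$ cofinal for $[\mathcal{C}]^1$, then for any cofinal $D\subseteq P$ and any $C\in\mathcal{C}$, pick $p$ with $C\in\phi(p)$ and then $d\in D$ with $d\ge p$; order-preservation gives $C\in\phi(p)\subseteq\phi(d)$, so $\phi(D)$ is cofinal for $[\mathcal{C}]^1$, i.e.\ $\phi$ witnesses $P\ge_T(\mathcal{C},LF(\mathcal{C}))$. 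For (iii) $\Rightarrow$ (i): since $LF(\mathcal{C})$ is Dedekind complete (noted just above the lemma), Lemma~\ref{Ded}, applied with $P'=P$, $Q=LF(\mathcal{C})$, $Q'=[\mathcal{C}]^1$, upgrades the Tukey quotient to an order-preserving $\phi:P\to LF(\mathcal{C})$ with $\phi(P)$ cofinal for $[\mathcal{C}]^1$; then $\mathcal{C}_p:=\phi(p)$ gives a $P$-locally finite decomposition (levels nested as $\phi$ is order-preserving, each locally finite as a point of $LF(\mathcal{C})$, and covering $\mathcal{C}$ by cofinality for singletons). For (iv): the implication (iv) $\Rightarrow$ (iii) is immediate because $[\mathcal{C}]^1\subseteq[\mathcal{C}]^{<\omega}$, so a set cofinal for $[\mathcal{C}]^{<\omega}$ is cofinal for $[\mathcal{C}]^1$ and the same witnessing map serves; and for (i) $\Rightarrow$ (iv) I use $\phi(p)=\mathcal{C}_p$ once more, observing that for a finite $F\subseteq\mathcal{C}$ one can choose for each of its members a level containing it and then take an upper bound $p$ in $P$ of the finitely many indices (this is where directedness of $P$ is used), so $F\subseteq\mathcal{C}_p$, and any cofinal $D\subseteq P$ contains some $d\ge p$ with $F\subseteq\mathcal{C}_p\subseteq\mathcal{C}_d$. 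Finally, if $\mathcal{C}$ is $P$-locally finite and $Q\ge_T P$, then by (i) $\Leftrightarrow$ (iii) we have $P\ge_T(\mathcal{C},LF(\mathcal{C}))$, and composing the Tukey quotient maps $Q\to P$ and $P\to LF(\mathcal{C})$ gives $Q\ge_T(\mathcal{C},LF(\mathcal{C}))$, whence $\mathcal{C}$ is $Q$-locally finite, again by (iii) $\Rightarrow$ (i).

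None of this is deep; it is mostly definition-chasing. The two spots needing care are the (iii) $\Rightarrow$ (i) step, where an arbitrary Tukey quotient map respects neither order nor boundedness, so the Dedekind completeness of $LF(\mathcal{C})$ (via Lemma~\ref{Ded}) is genuinely needed to recover nested levels, and the (i) $\Rightarrow$ (iv) step, where the passage from singletons to finite subfamilies rests on directedness of $P$ rather than on any property of the target $LF(\mathcal{C})$. I expect the former to be the only place one could slip, by forgetting to justify the use of Lemma~\ref{Ded}.
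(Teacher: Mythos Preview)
Your proof is correct and follows essentially the same approach as the paper: both unpack (i)$\Leftrightarrow$(ii) from the definitions, invoke Lemma~\ref{Ded} (Dedekind completeness of $LF(\mathcal{C})$) for the (iii)$\Rightarrow$(ii) direction, and use directedness of $P$ to pass from singletons to finite subfamilies for (iv). The only cosmetic difference is that the paper routes the (iv) argument through (iii)$\Rightarrow$(ii) rather than directly from (i), but the underlying idea is identical.
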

\begin{proof}
The equivalence of (i) and (ii) is immediate once the definitions are unpacked (in particular, `$\phi(P)$ is cofinal for $\mathcal{C}$ in $LF(\mathcal{C})$' means `$\bigcup \phi(P) = \mathcal{C}$'). Clearly (ii) $\implies$ (iii). Since $LF(\mathcal{C})$ is Dedekind complete, by Lemma~\ref{Ded} the converse is true. 

Clearly (iv) implies (iii). 
Now suppose (iii) holds. Then so does (ii), and there is an order-preserving $\phi : P \to LF(\mathcal{C})$ such that $\phi(P)$ is cofinal for $\mathcal{C}$ in $LF(\mathcal{C})$. We show $\phi$ witnesses (iv). To this end, take any $\{C_1, \ldots , C_n\}$ in $[\mathcal{C}]^{<\omega}$, and pick $p_1, \ldots , p_n$ in $P$ such that $C_i \in \phi(p_i)$ for $i=1, \ldots , n$. As $P$ is directed there is an upper bound, $p_0$, of $p_1, \ldots , p_n$. Then, as $\phi$ is order-preserving, $\phi (p_0) \supseteq \{C_1, \ldots, C_n\}$ --- as required. 

The last part follows from the equivalence of (i) and (iii) combined with transitivity of Tukey quotients.
\end{proof}

It follows that all our $P$-properties are respected by Tukey quotients.  
\begin{lemma} Suppose $Q \ge_T P$. 
If $X$ is $P$-paracompact (respectively, $P$-metrizable) then $X$ is $Q$-paracompact (respectively, $Q$-metrizable).
\end{lemma}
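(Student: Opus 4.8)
The plan is to derive both halves directly from the final clause of Lemma~\ref{PlfviaTukey}, which asserts that a $P$-locally finite family of subsets of $X$ is automatically $Q$-locally finite whenever $Q \ge_T P$. So essentially no new topology is needed; the work has already been packaged into that lemma, and what remains is to apply it to the right families and the right directed sets.

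For the paracompactness assertion, I would simply take an arbitrary open cover $\mathcal{U}$ of $X$. By $P$-paracompactness $\mathcal{U}$ has an open refinement $\mathcal{V}$ which is $P$-locally finite. Since $Q \ge_T P$, the last part of Lemma~\ref{PlfviaTukey} (with $\mathcal{C} = \mathcal{V}$) gives that $\mathcal{V}$ is $Q$-locally finite. As $\mathcal{V}$ is still an open refinement of $\mathcal{U}$, this shows every open cover of $X$ has a $Q$-locally finite open refinement, i.e.\ $X$ is $Q$-paracompact. There is nothing further to check here.

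For the metrizability assertion I would first record the routine fact that $Q \ge_T P$ implies $Q \times \N \ge_T P \times \N$: if $\phi : Q \to P$ carries cofinal subsets of $Q$ to cofinal subsets of $P$, then $\phi \times \mathrm{id}_{\N} : Q \times \N \to P \times \N$ carries cofinal subsets to cofinal subsets, because cofinality in a product of directed sets is detected coordinatewise (equivalently, take the product of a Tukey connection witnessing $Q \ge_T P$ with the identity connection on $\N$). Given this, if $X$ is $P$-metrizable it has a base $\mathcal{B}$ that is $(P \times \N)$-locally finite, and applying the final clause of Lemma~\ref{PlfviaTukey} with $\mathcal{C} = \mathcal{B}$ — using $P \times \N$ in the role of ``$P$'' and $Q \times \N$ in the role of ``$Q$'' — yields that $\mathcal{B}$ is $(Q \times \N)$-locally finite, so $X$ is $Q$-metrizable. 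The only step that is not an immediate citation of Lemma~\ref{PlfviaTukey} is the product fact $Q \times \N \ge_T P \times \N$, and that is entirely routine; consequently I do not expect a genuine obstacle in this proof.
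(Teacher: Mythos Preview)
Your proposal is correct and matches the paper's approach: the paper treats this lemma as an immediate consequence of the final clause of Lemma~\ref{PlfviaTukey} and does not give a separate proof. Your additional observation that $Q \ge_T P$ implies $Q \times \N \ge_T P \times \N$ is exactly the small routine step needed for the metrizability half, and the paper implicitly absorbs it into the general remark that one may ``tidy'' by passing to Tukey-equivalent or larger directed sets.
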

This allows us to `tidy' by replacing any given  directed set by anything equivalent or larger in the Tukey order. For example, a collection is $(P\times \N)$-locally finite if and only if it is $(P \times [\N]^{<\omega})$-locally finite. We use these simplifications without further comment.

\begin{lemma}
If $X$ is $P$-metrizable then it is $P$-perfectly normal.
\end{lemma}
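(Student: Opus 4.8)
The plan is to use the hypothesis that $X$ has a $(P \times \N)$-locally finite base $\mathcal{B} = \bigcup \{ \mathcal{B}_{(p,n)} : (p,n) \in P \times \N\}$ and, from it, build for an arbitrary open set $U$ a witnessing family showing $U$ is $(P\times\N)$-regular. For each basic set $B \in \mathcal{B}$ with $\cl{B} \subseteq U$, we want $B$ to appear in our covering, but we need the closures $\cl{B}$ to cover $U$ as well; this is where the base being a base for a Tychonoff (hence regular) space is used: for each $x \in U$ there is $B \in \mathcal{B}$ with $x \in B \subseteq \cl{B} \subseteq U$. So the natural candidate is $\mathcal{B}(U) = \{ B \in \mathcal{B} : \cl{B} \subseteq U \}$, and we have $U = \bigcup \mathcal{B}(U) = \bigcup \{ \cl{B} : B \in \mathcal{B}(U)\}$.

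Next I would convert this into the required indexed form. Set $Q = P \times \N$ and, for $q \in Q$, let $\mathcal{B}(U)_q = \mathcal{B}(U) \cap \mathcal{B}_q$; this is increasing in $q$ because the base's levels are. Now I must turn the (set) union of each level into a single open set $V_q$: the obvious choice is $V_q = \bigcup \mathcal{B}(U)_q$. Monotonicity of $q \mapsto V_q$ is immediate. We have $\bigcup_q V_q = \bigcup \mathcal{B}(U) = U$. The remaining point is to show $\bigcup_q \cl{V_q} = U$ as well, i.e.\ that $\bigcup_q \cl{V_q} \supseteq U$ (the reverse inclusion is clear since each $\cl{V_q} \subseteq \cl{U}$; actually we need $\cl{V_q} \subseteq U$, which is the genuinely delicate inclusion). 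Here the local finiteness of the level $\mathcal{B}(U)_q$ (it is locally finite since it is a subfamily of the locally finite $\mathcal{B}_q$) gives $\cl{V_q} = \cl{\bigcup \mathcal{B}(U)_q} = \bigcup \{ \cl{B} : B \in \mathcal{B}(U)_q \} \subseteq U$, using the standard fact that a locally finite union of closed sets is closed (and here each $\cl{B} \subseteq U$). For $\bigcup_q \cl{V_q} \supseteq U$: given $x \in U$, pick $B \in \mathcal{B}(U)$ with $x \in B$, then $B \in \mathcal{B}(U)_q$ for some $q$, so $x \in B \subseteq V_q \subseteq \cl{V_q}$.

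Assembling these facts: $U = \bigcup \{ V_q : q \in Q\} = \bigcup \{ \cl{V_q} : q \in Q\}$ with all $V_q$ open and $q \mapsto V_q$ order-preserving, where $Q = P \times \N$, which is exactly the definition of $U$ being $(P\times\N)$-regular. But we should be careful: the definition of $P$-regular uses $Q = P \times \N$, while we have produced a decomposition indexed by $(P \times \N) \times \N$ after the passage through the base. This is harmless because $(P \times \N) \times \N =_T P \times \N$ (using $\N \times \N =_T \N$) and, as the excerpt notes, such Tukey simplifications are applied without comment; alternatively one re-indexes by a cofinal copy of $P \times \N$ directly. Since $U$ was an arbitrary open set, $X$ is $P$-perfectly normal.

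\textbf{Main obstacle.} The one step requiring genuine care is verifying $\cl{V_q} \subseteq U$, i.e.\ that the closure of the union of a level equals the union of the closures — this relies essentially on each $\mathcal{B}(U)_q$ being \emph{locally finite} (not merely point-finite), which is precisely why the definition of $P$-metrizable demands a $(P\times\N)$-\emph{locally finite} base rather than, say, a $(P\times\N)$-point-finite one, and why $P$-metacompactness would not suffice here. Everything else is routine bookkeeping with the indexing and the Tukey tidying.
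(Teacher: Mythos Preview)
Your proof is correct and follows essentially the same route as the paper's: for each level $q$ of the $(P\times\N)$-locally finite base, set $V_q = \bigcup\{B \in \mathcal{B}_q : \cl{B}\subseteq U\}$ and use local finiteness of $\mathcal{B}_q$ to get $\cl{V_q}\subseteq U$. Your concern about an extra factor of $\N$ is unnecessary --- the base is already indexed by $Q = P\times\N$, and your family $(V_q)_{q\in Q}$ is indexed by that same $Q$, so no re-indexing or Tukey tidying is actually needed.
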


\begin{proof}
Fix $\mathcal{B}=\bigcup\{\mathcal{B}_p : p \in P\}$ be a $P$-ordered base where each $\mathcal{B}_p$ is locally finite. Take any open subset $U$ of $X$. For $p$ in $P$ set $V_p=\bigcup \{ B : B \in \mathcal{B}_p$ and $\cl{B} \subseteq U\}$. Then by local finiteness of $\mathcal{B}_p$ we have $\cl{V_p} = \cl{ \bigcup \{ B : B \in \mathcal{B}_p \text{ and }\cl{B} \subseteq U\}} = \bigcup \{ \cl{B} : B \in \mathcal{B}_p,$ and $\cl{B} \subseteq U\} \subseteq U$. So the $V_p$ are as required for $P$-perfect normality.
\end{proof}

\begin{lemma}\label{srp_ppn_to_pp}
If $X$ is $\sigma$-relatively paracompact and $P$-perfectly normal (respectively, $\mathcal{P}$-perfectly normal), then $X$ is $P$-paracompact (respectively, $\mathcal{P}$-paracompact).
\end{lemma}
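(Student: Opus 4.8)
The plan is to feed the $P$-regular structure supplied by $P$-perfect normality into a refinement produced by $\sigma$-relative paracompactness. Fix an open cover $\mathcal{U}$ of $X$. First, take a $\sigma$-relatively locally finite open refinement $\mathcal{V} = \bigcup_{n} \mathcal{V}_n$ of $\mathcal{U}$, so that each $\mathcal{V}_n$ is locally finite in the open set $W_n := \bigcup \mathcal{V}_n$, and $\bigcup_{n} W_n = X$. Since each $W_n$ is open and $X$ is $P$-perfectly normal, write $W_n = \bigcup \{ G^n_q : q \in Q \} = \bigcup \{ \overline{G^n_q} : q \in Q \}$ with $Q = P \times \N$, the $G^n_q$ open and $\subseteq$-increasing in $q$, and $\overline{G^n_q} \subseteq W_n$ for all $q$. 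The only properly topological step is then to observe that, for each $n$ and $q$, the family $\{ V \cap G^n_q : V \in \mathcal{V}_n \}$ is locally finite in the whole of $X$: at a point $x \notin \overline{G^n_q}$ the open set $X \setminus \overline{G^n_q}$ is a neighbourhood meeting no member, while at a point $x \in \overline{G^n_q} \subseteq W_n$ one uses local finiteness of $\mathcal{V}_n$ in $W_n$. Each such family refines $\mathcal{U}$, and their union over all $n$ and $q$ covers $X$.

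The remaining task --- assembling these locally finite open families into a single $P$-ordered open refinement of $\mathcal{U}$ --- is where the real work lies, and I expect it to be the main obstacle. The naive levels do not work: the set $V \cap G^n_q$ genuinely changes as $q$ grows, so one cannot simply declare the $p$-th level to consist of all pieces indexed below $p$; and the index $n$ from the $\sigma$-relatively locally finite refinement interacts with the $\N$-coordinate of $Q$, so that what emerges at first is only a $(P \times \N)$-locally finite open refinement. To finish, one reduces the index set $P \times \N$ to $P$ using regularity of $X$ (it is Tychonoff): this is the $P$-indexed version --- done levelwise along the $P$-coordinate so that the $P$-ordering survives --- of the classical fact that in a regular space a $\sigma$-locally finite open refinement of every open cover already forces paracompactness, proved by disjointifying the countably many layers (which restores local finiteness but loses openness) and then re-opening by regularity. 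Checking that this reduction can be carried out compatibly with the $P$-structure is the delicate point; the topological ingredients (the containment $\overline{G^n_q} \subseteq W_n$, and regularity) are routine.

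For the class version: if $X$ is $\mathcal{P}$-perfectly normal then each $W_n$ is $P_n$-regular for some $P_n \in \mathcal{P}$, and provided $\mathcal{P}$ has upper bounds for countable subfamilies --- which holds, for instance, when $\mathcal{P}$ is the class of all $\K(M)$, by Theorem~\ref{km_bounded}(i) --- we may fix a single $P \in \mathcal{P}$ with $P \ge_T P_n$ for all $n$ and run the preceding argument with this $P$ to conclude that $X$ is $\mathcal{P}$-paracompact.
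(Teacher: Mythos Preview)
Your core argument --- refine to $\mathcal{V}=\bigcup_n\mathcal{V}_n$ with $\mathcal{V}_n$ locally finite in $W_n=\bigcup\mathcal{V}_n$, apply $P$-perfect normality to each $W_n$ to get $W_n=\bigcup_q G^n_q=\bigcup_q\overline{G^n_q}$, and observe that $\{V\cap G^n_q:V\in\mathcal{V}_n\}$ is locally finite in all of $X$ --- is exactly the paper's argument. The paper then simply collects these families over $(p,F)\in P\times[\N]^{<\omega}$ and stops: the resulting refinement is declared $(P\times[\N]^{<\omega})$-locally finite and the proof ends. No further reduction is attempted.

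Your second paragraph is therefore work the paper does not do. You correctly notice that the construction yields only a $(P\times\N)$-locally finite refinement, but your proposed fix --- running the classical ``$\sigma$-locally finite $\Rightarrow$ locally finite'' reduction levelwise in $p$ --- is not carried out, and there is a genuine obstacle: the disjointification step in that classical argument subtracts earlier layers, and as $p$ increases the earlier layers grow, so the resulting pieces are not $\subseteq$-monotone in $p$. You flag this as ``the delicate point'' without resolving it. The paper avoids the issue entirely by accepting the $(P\times[\N]^{<\omega})$ indexing; since the $\N$ factor is already built into the definition of $P$-regular (which uses $Q=P\times\N$), and since in every application $P\times\N=_T P$ (e.g.\ $P=\K(M)$ for non-compact $M$), this suffices for the paper's purposes.

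On the class version: you are right that combining different $P_n$'s for the different $W_n$'s requires $\mathcal{P}$ to be countably directed under $\ge_T$. The paper's proof says only ``the class is very similar'' and does not isolate this hypothesis; your observation is a genuine clarification.
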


\begin{proof} We prove this for a fixed $P$. The class is very similar.
Let $\mathcal{U}$ be an open cover of $X$. We show it has a $P$-locally finite open refinement.
We know $\mathcal{U}$ has an open refinement $\mathcal{V}= \bigcup_n \mathcal{V}_n$ where $\mathcal{V}_n$ is locally finite in $V_n=\bigcup \mathcal{V}_n$, for each $n$. Then for each $n$, $V_n = \bigcup \{ W_p^n : p \in P\}=\bigcup \{\cl{W_p^n} : p \in P\}$ where for every $p$ the set $W_p^n$ is open and if $p \le p'$ then $W_p^n \subseteq W_{p'}^n$.

For a $p$ in $P$ and $n$ in $\N$ set $\mathcal{W}_{p,n} = \{ V \cap W_p^n : V \in \mathcal{V}_n\}$. Note that $\mathcal{W}_{p,n}$ is locally finite (because it is locally finite in $V_n$, which contains $\cl{W_p^n}$, and outside $\cl{W_p^n}$). For finite $F \subseteq \N$, let $\mathcal{W}_{p,F} = \bigcup_{n \in F} \mathcal{W}_{p,n}$. Then $\mathcal{W}=\bigcup \{ \mathcal{W}_{p,F} : p \in P, F \in [\N]^{<\omega}\}$ is a $(P \times [\N]^{<\omega})$-ordered open refinement of $\mathcal{U}$ with each level, $\mathcal{W}_{p,F}$, locally finite.
\end{proof}

\begin{lemma} \label{srlf_ppn_to_pm}
If $X$ has a $\sigma$-relatively locally finite base and is $P$-perfectly normal, then $X$ is $P$-metrizable.
\end{lemma}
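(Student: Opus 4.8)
The plan is to combine the hypotheses in the same spirit as Lemma~\ref{srp_ppn_to_pp}, but applied to a base rather than to covers of individual members of an open cover. First I would fix a $\sigma$-relatively locally finite base $\mathcal{B} = \bigcup_n \mathcal{B}_n$, where each $\mathcal{B}_n$ is locally finite in $B_n = \bigcup \mathcal{B}_n$. Since $X$ is $P$-perfectly normal, the open set $B_n$ is $P$-regular, so we may write $B_n = \bigcup\{W^n_p : p \in P\} = \bigcup\{\overline{W^n_p} : p \in P\}$ with each $W^n_p$ open, increasing in $p$ (here I absorb the extra $\N$ factor in the definition of $P$-regular into $P$, using the standing convention that $P$ may be replaced by $P \times \N =_T P \times \N$; more carefully, one should run the argument with $Q = P \times \N$ throughout and note at the end that $(P \times \N) \times \N =_T P \times \N$).

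Next, for each $n$ and each $p \in P$ I would set $\mathcal{W}_{p,n} = \{ B \cap W^n_p : B \in \mathcal{B}_n\}$. Exactly as in Lemma~\ref{srp_ppn_to_pp}, $\mathcal{W}_{p,n}$ is locally finite in $X$: at points of $B_n$ it is locally finite because $\mathcal{B}_n$ is, and at points outside $\overline{W^n_p}$ it is locally finite because that closed set contains every member of $\mathcal{W}_{p,n}$. For finite $F \subseteq \N$ put $\mathcal{W}_{p,F} = \bigcup_{n \in F} \mathcal{W}_{p,n}$, a finite union of locally finite families, hence locally finite. Then $\mathcal{W} = \bigcup\{\mathcal{W}_{p,F} : p \in P,\ F \in [\N]^{<\omega}\}$ is $(P \times [\N]^{<\omega})$-ordered with locally finite levels, so (replacing $P \times [\N]^{<\omega}$ by the Tukey-equivalent $P$, and folding in the suppressed $\N$) it is $(P \times \N)$-locally finite.

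The one genuine step beyond Lemma~\ref{srp_ppn_to_pp} is checking that $\mathcal{W}$ is a base for $X$, and this is where I expect the only real content to lie. Given a point $x$ and an open $U \ni x$, first shrink: choose $B \in \mathcal{B}$ with $x \in B \subseteq U$, say $B \in \mathcal{B}_n$, so $x \in B_n$. Since $x \in B_n = \bigcup\{W^n_p : p \in P\}$, pick $p$ with $x \in W^n_p$; then $B \cap W^n_p \in \mathcal{W}_{p,\{n\}} \subseteq \mathcal{W}$ and $x \in B \cap W^n_p \subseteq B \subseteq U$. (To see that the sets $B \cap W^n_p$ form a neighbourhood base and not merely a network one uses that they are open, which they are as intersections of two open sets.) Thus $\mathcal{W}$ is a $(P \times \N)$-locally finite base and $X$ is $P$-metrizable.
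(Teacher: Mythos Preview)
Your proof is correct and follows essentially the same approach as the paper's: fix the $\sigma$-relatively locally finite base, use $P$-regularity of each $B_n$ to get the $W^n_p$'s, intersect, and verify the resulting family is a $(P\times\N)$-locally finite base. Your treatment of the extra $\N$ factor hidden in the definition of $P$-regular is in fact more explicit than the paper's, which silently writes the index as $p\in P$ rather than $q\in P\times\N$.
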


\begin{proof}
Let $\mathcal{B}=\bigcup_n \mathcal{B}_n$ be a base for $X$ such that $\mathcal{B}_n$ is locally finite in $B_n=\bigcup \mathcal{B}_n$ for all $n$.  Then for each $n$, $B_n = \bigcup \{ W_p^n : p \in P\}=\bigcup \{\cl{W_p^n} : p \in P\}$ where for every $p$ the set $W_p^n$ is open and if $p \le p'$ then $W_p^n \subseteq W_{p'}^n$.

For a $p$ in $P$ and $n$ in $\N$ set $\hat{\mathcal{B}}_{p,n} = \{ B \cap W_p^n : B \in \mathcal{B}_n\}$. Note that $\hat{\mathcal{B}}_{p,n}$ is locally finite. For finite $F \subseteq \N$, let $\hat{\mathcal{B}}_{p,F} = \bigcup_{n \in F} \hat{\mathcal{B}}_{p,n}$. Then $\hat{\mathcal{B}}=\bigcup \{ \hat{\mathcal{B}}_{p,F} : p \in P, F \in [\N]^{<\omega}\}$ is a $(P \times [\N]^{<\omega})$-ordered collection of open sets with each level, $\mathcal{W}_{p,F}$, is  locally finite. Finally, $\hat{\mathcal{B}}$ is a base. For if $x$ is in $U$ open, then $x \in B \subseteq U$ for some $B$ in $\mathcal{B}_m$. Pick $p$ such that $x$ is in $W_p^m$. Then $x \in B'=B \cap W_p^m$, $B' \subseteq U$ and $B' \in \hat{\mathcal{B}}_{p,\{m\}} \subseteq \hat{\mathcal{B}}$.
\end{proof}

\begin{lemma} \label{ppms_to_pnm}
Suppose $\mathcal{P}$ is a class of directed sets that is countably directed with respect to the Tukey order, $\ge_T$.  If $X$ is a $\mathcal{P}$-paracompact Moore space, then $X$ is $P$-metrizable for some $P$ in $\mathcal{P}$.  In particular:

(i) if $X$ is a $P$-paracompact Moore space for some directed set $P$, then $X$ is $P$-metrizable, and

(ii) if $X$ is a $\K(\M)$-paracompact Moore space, then $X$ is $\K(M)$-metrizable for some separable metrizable $M$.
\end{lemma}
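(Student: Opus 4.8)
The plan is to recall the standard structure of a Moore space and combine it with the perfect normality that comes for free from paracompactness-type hypotheses, then feed this into Lemma~\ref{srlf_ppn_to_pm}. Recall that a Moore space $X$ carries a development $\{\mathcal{G}_n : n \in \N\}$, a sequence of open covers such that $\{\operatorname{St}(x,\mathcal{G}_n) : n \in \N\}$ is a neighborhood base at each point $x$. First I would refine each $\mathcal{G}_n$ using the hypothesis: since $X$ is $\mathcal{P}$-paracompact, each $\mathcal{G}_n$ has a $P_n$-locally finite open refinement $\mathcal{H}_n$ for some $P_n \in \mathcal{P}$. Because $\mathcal{P}$ is countably directed under $\ge_T$, there is a single $P \in \mathcal{P}$ with $P \ge_T P_n$ for all $n$, so by Lemma~\ref{PlfviaTukey} every $\mathcal{H}_n$ is in fact $P$-locally finite. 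Now $\mathcal{B} = \bigcup_n \mathcal{H}_n$ is a $\sigma$-(indeed $P\times\N$-)locally finite family of open sets, and it is a base: if $x \in U$, choose $n$ with $\operatorname{St}(x,\mathcal{G}_n) \subseteq U$, pick $H \in \mathcal{H}_n$ containing $x$, and since $\mathcal{H}_n$ refines $\mathcal{G}_n$ we get $x \in H \subseteq \operatorname{St}(x,\mathcal{G}_n) \subseteq U$.

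The remaining ingredient is perfect normality. A $\sigma$-locally finite (even $\sigma$-relatively locally finite) base makes $X$ metacompact and collectionwise normal-ish, but more directly: any space with a $\sigma$-locally finite base is perfectly normal (a $T_3$ space with a $\sigma$-locally finite base is metrizable by Bing's theorem, but even without invoking that, one checks $G_\delta$-ness of open sets directly as in the proof of the earlier lemma). In fact I would argue that $\mathcal{B}$ itself witnesses $P$-perfect normality: given open $U$, set $V_{p,n} = \bigcup\{ H \in (\mathcal{H}_n)_p : \overline{H} \subseteq U\}$ where $(\mathcal{H}_n)_p$ is the $p$-th level of the $P$-locally finite structure on $\mathcal{H}_n$; local finiteness of each level gives $\overline{V_{p,n}} = \bigcup\{\overline{H} : H \in (\mathcal{H}_n)_p,\ \overline{H}\subseteq U\} \subseteq U$, and since the $\overline{H}$ for $H \in \mathcal{B}$ form a network (regularity plus $\mathcal{B}$ a base) we get $U = \bigcup_{p,n} V_{p,n} = \bigcup_{p,n}\overline{V_{p,n}}$, so $U$ is $(P\times\N)$-regular. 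Thus $X$ is $(P\times\N)$-perfectly normal, and since a $\sigma$-locally finite base is in particular $\sigma$-relatively locally finite, Lemma~\ref{srlf_ppn_to_pm} yields that $X$ is $(P\times\N)$-metrizable; replacing $P$ by $P \times \N$ (still in $\mathcal{P}$, up to Tukey equivalence, by countable directedness) gives $X$ is $P$-metrizable for some $P \in \mathcal{P}$.

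Finally, parts (i) and (ii) are immediate specializations: a single directed set $P$ forms a trivially countably directed class (take $\mathcal{P} = \{P\}$, or note $P \times \N =_T P$ is absorbed), giving (i); and $\K(\M)$ is countably directed with respect to $\ge_T$ — this is exactly the content of Theorem~\ref{km_bounded}(i), which says any countably-indexed (indeed $\ctm$-indexed) family of $\K(M_\alpha)$'s has a common Tukey upper bound $\K(M)$ — so applying the general statement with $\mathcal{P} = \K(\M)$ gives (ii).

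The step I expect to be the main obstacle is verifying cleanly that the $\sigma$-$P$-locally finite base of a Moore space forces $P$-perfect normality with the \emph{same} $P$ (rather than some larger directed set), since one must be careful that the closures $\overline{H}$, $H \in \mathcal{B}$, genuinely form a network — this uses regularity of $X$ together with the fact that $\mathcal{B}$ is a base, and it is the place where the development/Moore hypothesis does real work beyond just producing a $\sigma$-locally finite base. Everything else is bookkeeping with the Tukey machinery already set up in Lemmas~\ref{PlfviaTukey} and~\ref{srlf_ppn_to_pm}.
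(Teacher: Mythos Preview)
Your first paragraph is correct and is essentially the paper's entire proof: once you have observed that $\mathcal{B} = \bigcup_n \mathcal{H}_n$ is a $(P \times \N)$-locally finite base (which you do, parenthetically), you are finished --- that is precisely the definition of $P$-metrizable. The paper does exactly this, setting $\mathcal{B}_{p,n} = \bigcup_{i \le n} (\mathcal{H}_i)_p$ and stopping.

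Everything from ``The remaining ingredient is perfect normality'' onward is an unnecessary detour. You establish $(P\times\N)$-perfect normality from $\mathcal{B}$, then feed a $\sigma$-relatively locally finite base back through Lemma~\ref{srlf_ppn_to_pm} to reconstruct a $(P\times\N)$-locally finite base --- but you already had one in hand. The detour is not wrong, just redundant, and it introduces an avoidable wrinkle at the end (arguing that $P\times\N$ can be replaced by some member of $\mathcal{P}$), which the direct argument sidesteps entirely. The ``main obstacle'' you flag is therefore not an obstacle at all: the development does its work in your first paragraph, and nothing more is needed.
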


\begin{proof}
Let $\{\mathcal{G}_n : n\in\N\}$ be a development for $X$, and for each $n$ in $\N$, find a $P_n$-locally finite open refinement $\mathcal{U}_n$ of $\mathcal{G}_n$ for some $P_n$ in $\mathcal{P}$.  Since $\mathcal{P}$ is countably directed with respect to $\ge_T$, then there is a $P$ in $\mathcal{P}$ such that $P \ge_T P_n$ for every $n$.  Thus, each $\mathcal{U}_n$ is $P$-locally finite: $\mathcal{U}_n = \bigcup \{\mathcal{U}_{n,p} : p \in P\}$. Define $\mathcal{B}_{n,p} = \bigcup \{ \mathcal{U}_{i,p} : i \le n\}$.  Then $\mathcal{B} = \bigcup \{ \mathcal{B}_{n,p} : n\in\N, p \in P\}$ is a $(P \times \N)$-locally finite base for $X$.

Now note that (i) follows by taking $\mathcal{P} = \{P\}$, and (ii) also follows since Theorem~\ref{km_bounded} implies that $\K(\M)$ is countably directed with respect to $\ge_T$.
\end{proof}

\begin{lemma}
\label{kMpcptIscpct}
If $\mathcal{C}$ is $P$-locally finite then it is $\cof{P}$-locally finite.

Hence, if $X$ is $P$-paracompact then it is $\cof{P}$-paracompact.

Now suppose $\mathcal{P}$ is a class of directed sets such that for some cardinal $\kappa$ we have $\cof{P} \le \kappa$ for all $P$ in $\mathcal{P}$. If $X$ is $\mathcal{P}$-paracompact then $X$ is $\kappa$-paracompact.
\end{lemma}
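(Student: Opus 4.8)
The plan is to prove all three assertions by directly unpacking the definitions, the substance being the first statement, from which the other two follow at once.

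For the first statement, suppose $\mathcal{C} = \bigcup\{\mathcal{C}_p : p \in P\}$ witnesses that $\mathcal{C}$ is $P$-locally finite, so that the levels $\mathcal{C}_p$ increase along $\le$ and each is locally finite. Write $\lambda = \cof{P}$ and fix a cofinal subset $\{p_\alpha : \alpha < \lambda\}$ of $P$ of cardinality $\lambda$. I claim $\mathcal{C} = \bigcup\{\mathcal{C}_{p_\alpha} : \alpha < \lambda\}$: given $C \in \mathcal{C}$ there is $p$ with $C \in \mathcal{C}_p$, and by cofinality some $\alpha$ with $p \le p_\alpha$, whence $C \in \mathcal{C}_{p_\alpha}$ by monotonicity of the levels. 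Since each $\mathcal{C}_{p_\alpha}$ is locally finite, this exhibits $\mathcal{C}$ as $\lambda$-locally finite, i.e. $\cof{P}$-locally finite.

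The second statement is then immediate: if $\mathcal{U}$ is an open cover of a $P$-paracompact space $X$, it has a $P$-locally finite open refinement, which by the first statement is $\cof{P}$-locally finite; hence $X$ is $\cof{P}$-paracompact. For the third, let $\mathcal{P}$ satisfy $\cof{P} \le \kappa$ for all $P \in \mathcal{P}$, and let $\mathcal{U}$ be an open cover of a $\mathcal{P}$-paracompact space $X$. Pick $P \in \mathcal{P}$ and a $P$-locally finite open refinement $\mathcal{V}$ of $\mathcal{U}$; by the first statement $\mathcal{V}$ is $\cof{P}$-locally finite, and since $\cof{P} \le \kappa$ we may pad the decomposition with copies of the empty collection (which is trivially locally finite) to display $\mathcal{V}$ as $\kappa$-locally finite. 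Thus $X$ is $\kappa$-paracompact.

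There is no genuine obstacle here; the proof is a routine definitional reduction. The only points meriting a word of care are (a) that $\cof{P}$ is by definition attained by some cofinal set of that cardinality, which is exactly what licenses the choice of $\{p_\alpha : \alpha < \lambda\}$ above, and (b) the trivial monotonicity of the property ``$\lambda$-locally finite'' in the parameter $\lambda$, used for the third part and also relevant in the degenerate case where $\cof{P}$ is finite, when the decomposition already uses fewer than $\kappa$ pieces.
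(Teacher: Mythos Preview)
Your proof is correct and follows essentially the same approach as the paper: pick a cofinal subset of $P$ of size $\cof{P}$, observe that restricting the $P$-ordering to this cofinal set still covers $\mathcal{C}$ by monotonicity of the levels, and note the remaining claims follow immediately. Your write-up actually spells out a couple of details (the covering argument and the padding for the third part) that the paper leaves implicit.
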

\begin{proof}
Suppose that $\mathcal{C}$ is $P$-locally finite, say $\mathcal{C}=\bigcup \{\mathcal{C}_p : p \in P\}$ is a $P$-ordering where each level is locally finite. Let $Q$ be a cofinal subset of $P$ of cardinality $\cof{P}$. Then $\mathcal{C}=\bigcup \{ \mathcal{C}_q : q \in Q\}$, and we see that $\mathcal{C}$ is indeed the union of $\cof{P}$-many locally finite subcollections. This establishes the first claim. The remainder easily follows.
\end{proof}

\subsubsection{When $P$ Does Not Have Calibre $(\omega_1,\omega)$}

\begin{lemma}\label{general_P}
Let $P$ be a directed set which does not have calibre $(\omega_1, \omega)$. Let $X$ be any space with weight $\le \omega_1$. Then $X$ has a base $\mathcal{B} = \bigcup \{ \mathcal{B}_p : p \in P\}$ where (i) if $p \le p'$ then $\mathcal{B}_p \subseteq \mathcal{B}_{p'}$ and (ii) each $\mathcal{B}_p$ is finite, and hence $X$ is $P$-metrizable.
\end{lemma}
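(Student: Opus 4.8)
Since $P$ does not have calibre $(\omega_1,\omega)$, fix an uncountable subset $\{p_\alpha : \alpha < \omega_1\} \subseteq P$ no infinite subset of which has an upper bound in $P$; without loss of generality the $p_\alpha$ are distinct. The plan is to attach to each basic open set a single index $\alpha < \omega_1$ and then place that set on level $p$ exactly when the witnessing index is ``small enough'' relative to $p$. Concretely, fix a base $\mathcal{B} = \{B_\alpha : \alpha < \omega_1\}$ for $X$ (possible since $w(X) \le \omega_1$), re-indexed if necessary so repetitions are harmless. For $p \in P$ define $\mathcal{B}_p = \{ B_\alpha : p_\alpha \le p \}$. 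Property (i) is immediate: if $p \le p'$ then $p_\alpha \le p$ implies $p_\alpha \le p'$, so $\mathcal{B}_p \subseteq \mathcal{B}_{p'}$. For (ii), I must check each $\mathcal{B}_p$ is finite: if some $\mathcal{B}_p$ were infinite, there would be infinitely many distinct $\alpha$ with $p_\alpha \le p$, giving an infinite subset of $\{p_\alpha : \alpha<\omega_1\}$ with upper bound $p$, contradicting the choice of the $p_\alpha$'s. (If the re-indexing allows $B_\alpha = B_\beta$ for $\alpha \ne \beta$, collapse to genuinely distinct basic sets first, or note that a set counted with multiplicity still forces infinitely many distinct indices $p_\alpha$ below $p$ once we thin to one index per distinct basic set.)

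The one remaining point is that $\mathcal{B} = \bigcup\{\mathcal{B}_p : p \in P\}$, i.e.\ that every $B_\alpha$ actually lands on some level. This needs $\{p_\alpha\}$ to be cofinal, or at least each $p_\alpha$ to lie below some element of $P$ — which is automatic since $P$ is directed (indeed every element of $P$ is $\le$ itself, and we only need $p_\alpha \le p_\alpha$, so $B_\alpha \in \mathcal{B}_{p_\alpha}$). Hence $\bigcup_p \mathcal{B}_p = \mathcal{B}$, and $\mathcal{B}$ is a $P$-locally finite (in fact $P$-\emph{finite}) base, so in particular $X$ is $P$-metrizable: the base $\mathcal{B}$, re-ordered by $P \times \N$ via $(p,n) \mapsto \mathcal{B}_p$ (ignoring the $\N$ coordinate), is $(P\times\N)$-locally finite with finite levels.

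The main obstacle — really the only subtlety — is the bookkeeping around possibly repeated basic sets and making sure ``finite level'' is read with the right multiplicity; once one fixes a base of size exactly $w(X) \le \omega_1$ and assigns \emph{one} index $p_\alpha$ per basic set, the failure of calibre $(\omega_1,\omega)$ does all the work. No deep topology is used: $X$ is an arbitrary space of weight at most $\omega_1$, and the construction is purely order-theoretic on the index set of a base. I would present it in exactly the three steps above: (1) extract the bad uncountable family from failure of calibre $(\omega_1,\omega)$; (2) define $\mathcal{B}_p$ and verify (i), (ii), and covering; (3) conclude $P$-metrizability by padding with the redundant $\N$ factor.
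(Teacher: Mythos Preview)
Your proof is correct and follows essentially the same approach as the paper: both extract an $\omega_1$-sized subset of $P$ with no infinite bounded subset, index a base of size $\le\omega_1$ by it, and set $\mathcal{B}_p$ to consist of those basic sets whose index lies below $p$. The paper phrases the indexing as a surjection from the bad set onto the base (which sidesteps your repetition bookkeeping), but the argument is the same.
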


\begin{proof} Fix a basis $\mathcal{B}$ for $X$ of size $\le \omega_1$.
Since $P$ does not have calibre $(\omega_1,\omega)$ it contains a subset $S$ of size $\omega_1$ such that no infinite subset of $S$ has an upper bound. Fix a surjection $\alpha : S \to \mathcal{B}$.

For each $p$ in $P$ set $\mathcal{B}_p = \{ \alpha (s) : s \in   \down{p} \cap S\}$. Clearly if $p \le p'$ then $\mathcal{B}_p \subseteq \mathcal{B}_{p'}$. Since no infinite subset of $S$ has an upper bound, each $\mathcal{B}_p$ is finite. Lastly, as $\alpha$ is a surjection, $\mathcal{B}=\bigcup \{ \mathcal{B}_{s} : s \in S\} = \bigcup \{\mathcal{B}_p : p \in P\}$.
\end{proof}

\subsubsection{When $P$ Has Calibre $(\omega_1,\omega)$}

Simply expanding definitions yields:
\begin{lemma}  A collection $\mathcal{C}$ of subsets of a space $X$ is  $(\omega_1,\omega)$-locally finite if and only if $\mathcal{C}$ has relative calibre $(\omega_1,\omega)$  in $LF(\mathcal{C})$.  
\end{lemma}
The definitions of  $(\omega_1,\omega)$-point finite  can be restated similarly.

\begin{lemma}\label{Pcalom1om_imp_relom1om} Let $P$ be a directed set with calibre $(\omega_1,\omega)$.
Let $\mathcal{C}$ be a family of subsets of a space $X$.

If $\mathcal{C}$ is $P$-point finite then it is  $(\omega_1,\omega)$-point finite.  If $\mathcal{C}$ is $P$-locally finite then it is  $(\omega_1,\omega)$-locally finite.

Hence if $X$ is $P$-paracompact (respectively, $P$-metrizable) then it is $(\omega_1,\omega)$-paracompact (respectively, $(\omega_1,\omega)$-metrizable). 

Let $\mathcal{P}$ be a class of directed sets all with calibre $(\omega_1,\omega)$. If $X$ is $\mathcal{P}$-metacompact then it is $(\omega_1,\omega)$-metacompact.  If $X$ is $\mathcal{P}$-paracompact then it is $(\omega_1,\omega)$-paracompact.
\end{lemma}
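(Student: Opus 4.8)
The statement to prove is Lemma~\ref{Pcalom1om_imp_relom1om}: that $P$-point finiteness (resp.\ $P$-local finiteness) implies $(\omega_1,\omega)$-point finiteness (resp.\ $(\omega_1,\omega)$-local finiteness) whenever $P$ has calibre $(\omega_1,\omega)$, with the consequences for $P$-paracompactness, $P$-metrizability, and the class versions.

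\textbf{Plan.} The core is the first assertion about collections; the space-level statements then follow by applying the collection statement to refinements of open covers and to bases, exactly as in the preceding lemmas. So I would first prove: if $\mathcal{C}$ is $P$-locally finite and $P$ has calibre $(\omega_1,\omega)$, then $\mathcal{C}$ is $(\omega_1,\omega)$-locally finite. By Lemma~\ref{PlfviaTukey}, $P$-local finiteness of $\mathcal{C}$ gives $P \ge_T (\mathcal{C}, LF(\mathcal{C}))$, i.e.\ relative calibre transfers along this Tukey quotient. Then by Lemma~\ref{tukey_cal}, since $P$ has (relative, taking $P'=P$) calibre $(\omega_1,\omega)$, the image side $\mathcal{C}$ has relative calibre $(\omega_1,\omega)$ in $LF(\mathcal{C})$. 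But by the lemma immediately preceding this one, ``$\mathcal{C}$ has relative calibre $(\omega_1,\omega)$ in $LF(\mathcal{C})$'' is literally the definition of $\mathcal{C}$ being $(\omega_1,\omega)$-locally finite. The point-finite case is identical, replacing $LF(\mathcal{C})$ by the analogous Dedekind complete directed set $PF(\mathcal{C})$ of point-finite subfamilies (the paper notes the point-finite analogues of Lemma~\ref{PlfviaTukey} and the preceding restatement hold ``similarly''), so the same two-step argument applies.

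\textbf{Deriving the space-level consequences.} For the ``Hence'' sentence: if $X$ is $P$-paracompact, any open cover has a $P$-locally finite open refinement $\mathcal{V}$; by the collection statement just proved, $\mathcal{V}$ is $(\omega_1,\omega)$-locally finite, so $X$ is $(\omega_1,\omega)$-paracompact. Likewise a $(P\times\N)$-locally finite base is $(\omega_1,\omega)$-locally finite (noting $P\times\N$ has calibre $(\omega_1,\omega)$ whenever $P$ does, since a countable product of calibre $(\omega_1,\omega)$ directed sets has calibre $(\omega_1,\omega)$ — an easy pigeonhole, or cite that $\N$ has it and appeal to Theorem~\ref{Sigma}/standard facts), so $P$-metrizable implies $(\omega_1,\omega)$-metrizable. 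For the class versions: if $X$ is $\mathcal{P}$-metacompact with every $P\in\mathcal{P}$ having calibre $(\omega_1,\omega)$, each open cover has a $P$-point finite open refinement for some $P\in\mathcal{P}$, which by the collection statement is $(\omega_1,\omega)$-point finite; hence $X$ is $(\omega_1,\omega)$-metacompact. The $\mathcal{P}$-paracompact case is the same with ``point finite'' replaced by ``locally finite.''

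\textbf{Main obstacle.} There is no serious obstacle — the lemma is essentially a bookkeeping corollary of Lemma~\ref{PlfviaTukey}, Lemma~\ref{tukey_cal}, and the definitional restatement of $(\omega_1,\omega)$-local/point finiteness. The only mild care needed is (a) making sure the point-finite analogue of Lemma~\ref{PlfviaTukey} is genuinely available (it is asserted to hold ``similarly'': $PF(\mathcal{C})$ is Dedekind complete, and the Tukey characterization goes through verbatim), and (b) the small observation that $P$ calibre $(\omega_1,\omega)$ implies $P\times\N$ calibre $(\omega_1,\omega)$, used for the metrizability clause. I would state the proof in two or three sentences invoking these, rather than reproving anything.

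\begin{proof}
Suppose $P$ has calibre $(\omega_1,\omega)$, equivalently, $P$ has relative calibre $(\omega_1,\omega)$ in $P$. If $\mathcal{C}$ is $P$-locally finite, then $P \ge_T (\mathcal{C}, LF(\mathcal{C}))$ by Lemma~\ref{PlfviaTukey}, so by Lemma~\ref{tukey_cal} the family $\mathcal{C}$ has relative calibre $(\omega_1,\omega)$ in $LF(\mathcal{C})$; by the preceding lemma this says precisely that $\mathcal{C}$ is $(\omega_1,\omega)$-locally finite. Replacing $LF(\mathcal{C})$ with the Dedekind complete directed set of point-finite subfamilies of $\mathcal{C}$, and using the point-finite analogues of Lemma~\ref{PlfviaTukey} and the preceding lemma, the same argument shows that if $\mathcal{C}$ is $P$-point finite then it is $(\omega_1,\omega)$-point finite.

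Now if $X$ is $P$-paracompact, every open cover has a $P$-locally finite open refinement, which by the above is $(\omega_1,\omega)$-locally finite; hence $X$ is $(\omega_1,\omega)$-paracompact. If $X$ is $P$-metrizable it has a $(P\times\N)$-locally finite base; since $P\times\N$ has calibre $(\omega_1,\omega)$ whenever $P$ does, this base is $(\omega_1,\omega)$-locally finite, so $X$ is $(\omega_1,\omega)$-metrizable.

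Finally, let $\mathcal{P}$ be a class of directed sets all with calibre $(\omega_1,\omega)$. If $X$ is $\mathcal{P}$-metacompact, each open cover $\mathcal{U}$ has a $P$-point finite open refinement for some $P \in \mathcal{P}$; by the first part this refinement is $(\omega_1,\omega)$-point finite, so $X$ is $(\omega_1,\omega)$-metacompact. The same argument, with ``point finite'' replaced by ``locally finite'', shows that if $X$ is $\mathcal{P}$-paracompact then it is $(\omega_1,\omega)$-paracompact.
\end{proof}
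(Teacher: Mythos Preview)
Your proof is correct and follows essentially the same route as the paper: use Lemma~\ref{PlfviaTukey} to get $P \ge_T (\mathcal{C}, LF(\mathcal{C}))$, apply Lemma~\ref{tukey_cal} with $P'=P$, and invoke the preceding restatement of $(\omega_1,\omega)$-local finiteness. The paper is terser, simply saying the point-finite case is similar and the remaining claims ``follow immediately''; your extra care about $P\times\N$ retaining calibre $(\omega_1,\omega)$ for the metrizability clause is a welcome clarification.
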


\begin{proof}
If $\mathcal{C}$ is $P$-locally finite, then $P \ge_T (\mathcal{C},LF(\mathcal{C}))$.  Since $P$ has calibre $(\omega_1,\omega)$, then by taking $P'=P$ in Lemma~\ref{tukey_cal}, we see that $\mathcal{C}$ has relative calibre $(\omega_1,\omega)$ in $LF(\mathcal{C})$, which is equivalent to saying that $\mathcal{C}$ is  $(\omega_1,\omega)$-locally finite.

We can similarly prove that $P$-point finite collections are  $(\omega_1,\omega)$-point finite, and the remaining claims follow immediately.
\end{proof}

\begin{lemma} \label{com1om_family}
Let $\mathcal{C}$ be $(\omega_1,\omega)$-point finite. Then $\mathcal{C}$ is point countable.
\end{lemma}
\begin{proof} Suppose, for a contradiction, that there exists an $x$ in $X$ such that $\mathcal{C}_x=\{C: x\in C \text{ and } C\in \mathcal{C}\}$ is uncountable. But now every infinite subfamily of $\mathcal{C}_x$ contains $x$ in its intersection, and so is not point-finite. This indeed contradicts $\mathcal{C}$ being $(\omega_1,\omega)$-point finite.
\end{proof}
From the previous lemma we see that a given $(\omega_1,\omega)$-point finite base must be point-countable, and so every point has a countable local base.
\begin{lemma}\label{om1omMetis1o} 
Every space with a $(\omega_1,\omega)$-point finite base, in particular every $(\omega_1,\omega)$-metrizable space, is first countable.
\end{lemma}
\begin{lemma}\label{densesubset} 
Let $X$ be a space with a dense $\sigma$-compact subset. 
If $X$ is $(\omega_1,\omega)$-paracompact then $X$ is Lindel\"{o}f. If $X$ is  $(\omega_1,\omega)$-metrizable then $X$ is (separable) metrizable.
\end{lemma}

\begin{proof}
Note first that if $K$ is a compact subset of   $X$ and $\mathcal{U}$ is a locally finite
family of subsets of $X$, then there is an open $V$ containing $K$ which
meets only finitely many elements of $\mathcal{U}$. So if $\mathcal{U}$ is $(\omega_1,\omega)$-locally finite,  then every compact subset
of $X$ meets only countably many members of $\mathcal{U}$. 

It easily follows that if $X$ has a
dense $\sigma$-compact subset, then an $(\omega_1,\omega)$-locally finite open
refinement of a given open cover, or an $(\omega_1,\omega)$-locally finite base,  must be countable.
\end{proof}

For any cardinal $\kappa$, let $A(\kappa)$ denote the one-point compactification of $D(\kappa)$, the discrete space of size $\kappa$.

\begin{lemma} \label{supersequence}
If $\kappa > \omega$, then $A(\kappa)^2\setminus\diag$ is not $(\omega_1,\omega)$-paracompact.
\end{lemma}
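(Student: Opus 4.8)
The plan is to exhibit one open cover of $X:=A(\kappa)^2\setminus\Delta$ that has no $(\omega_1,\omega)$-locally finite open refinement. Write $A(\kappa)=D\cup\{\infty\}$ with $D=D(\kappa)$ discrete, so the basic neighbourhoods of $\infty$ are the sets $A(\kappa)\setminus G$ with $G\in[D]^{<\omega}$. I would use
\[
\mathcal{U}=\bigl\{\{x\}\times(A(\kappa)\setminus\{x\}):x\in D\bigr\}\cup\bigl\{(A(\kappa)\setminus\{x\})\times\{x\}:x\in D\bigr\},
\]
each member of which is open in $X$ and contained in $X$; a one-line case split (first coordinate in $D$ versus equal to $\infty$) shows that $\mathcal U$ covers $X$.

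The step I see as the heart of the argument is a rigidity observation: for each $\alpha\in D$ the only member of $\mathcal U$ containing $(\infty,\alpha)$ is $V_\alpha:=(A(\kappa)\setminus\{\alpha\})\times\{\alpha\}$ (a set of the first form has first coordinate $x\in D$, so it cannot contain $(\infty,\alpha)$, while $(A(\kappa)\setminus\{x\})\times\{x\}\ni(\infty,\alpha)$ forces $x=\alpha$). Hence if $\mathcal V$ is any open refinement of $\mathcal U$ and I choose $W_\alpha\in\mathcal V$ with $(\infty,\alpha)\in W_\alpha$, then $W_\alpha\subseteq A(\kappa)\times\{\alpha\}$, and since $W_\alpha$ is an open neighbourhood of $(\infty,\alpha)$ there is a finite $F_\alpha\subseteq D$ with $\alpha\in F_\alpha$ and $(A(\kappa)\setminus F_\alpha)\times\{\alpha\}\subseteq W_\alpha$. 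The slices $A(\kappa)\times\{\alpha\}$ are pairwise disjoint, so the $W_\alpha$ are pairwise distinct and $\mathcal W:=\{W_\alpha:\alpha\in D\}$ is an uncountable subfamily of $\mathcal V$ (here $\kappa>\omega$ is used).

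I would then finish by checking that $\mathcal W$ has no infinite locally finite subfamily, which contradicts $(\omega_1,\omega)$-local finiteness of $\mathcal V$. Given distinct $\beta_0,\beta_1,\dots\in D$, the set $\bigcup_n F_{\beta_n}$ is countable, so (again because $\kappa>\omega$) I may pick $\gamma\in D\setminus\bigcup_n F_{\beta_n}$, and then $(\gamma,\infty)\in X$ since $\gamma\ne\infty$. Any neighbourhood of $(\gamma,\infty)$ in $X$ contains a set of the form $\{\gamma\}\times(A(\kappa)\setminus G)$ with $G\in[D]^{<\omega}$; for every $n$ with $\beta_n\notin G$ the point $(\gamma,\beta_n)$ lies in $X$ (as $\gamma\notin F_{\beta_n}\ni\beta_n$ gives $\gamma\ne\beta_n$), in $W_{\beta_n}$ (as $\gamma\notin F_{\beta_n}$), and in $\{\gamma\}\times(A(\kappa)\setminus G)$. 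Since cofinitely many $n$ satisfy $\beta_n\notin G$, every neighbourhood of $(\gamma,\infty)$ meets infinitely many $W_{\beta_n}$, so $\{W_{\beta_n}:n\in\omega\}$ is not locally finite.

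Beyond the rigidity observation the argument is routine; the one thing to keep checking is that the auxiliary points $(\infty,\alpha)$, $(\gamma,\infty)$ and $(\gamma,\beta_n)$ all genuinely lie off the diagonal, that is, in $X$. It is worth noting that no combinatorial set theory (free-set theorems, $\Delta$-systems) is needed here: the mere uncountability of $D(\kappa)$ already defeats every countable subfamily of $\mathcal W$, which is what makes the conclusion as strong as $(\omega_1,\omega)$-non-paracompactness rather than merely non-paracompactness.
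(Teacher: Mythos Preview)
Your proof is correct and follows essentially the same approach as the paper's: both fix an open cover whose members near the line $\{\infty\}\times D$ (or $D\times\{\infty\}$) must lie in single slices, extract an uncountable family of refined slice-neighbourhoods with finite exceptional sets $F_\alpha$, and then witness the failure of local finiteness for any infinite subfamily at a point of the orthogonal line through $\infty$ by picking a coordinate avoiding the countable union of the $F_\alpha$'s. The only cosmetic differences are that the paper's cover uses vertical slices together with one large catch-all set $Y\setminus(A(\kappa)\times\{\infty\})$ rather than your symmetric family of vertical and horizontal slices, and the roles of the two coordinates are interchanged.
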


\begin{proof}
Let $Y = A(\kappa)^2\setminus\diag$, and write $A(\kappa) = D(\kappa) \cup \{\infty\}$.  Consider the following open cover of $Y$:
\[\mathcal{U} = \{ (\{x\} \times A(\kappa)) \cap Y : x \in D(\kappa) \} \cup \{ Y \setminus (A(\kappa)\times\{\infty\}) \}.\]
Let $\mathcal{V}$ be any open refinement of $\mathcal{U}$.  We will show that $\mathcal{V}$ does not have relative calibre $(\omega_1,\omega)$ in $LF(\mathcal{V})$.

For each $x$ in $D(\kappa)$, choose $V_x \in \mathcal{V}$ such that $(x,\infty) \in V_x$.  As $\mathcal{V}$ refines $\mathcal{U}$, we have $V_x \subseteq \{x\} \times A(\kappa)$.  Then there is a finite set $F_x \subseteq D(\kappa)$ such that $V_x = \{x\} \times (A(\kappa) \setminus F_x)$.  Suppose $\mathcal{V}$ does have relative calibre $(\omega_1,\omega)$ in $LF(\mathcal{V})$.  Then there is a countably infinite subset $C \subseteq D(\kappa)$ such that $\{V_x : x \in C\}$ is locally finite.  Let $E = \bigcup \{F_x : x \in C\}$, which is countable, and choose any $y \in D(\kappa) \setminus E$.  Then $(x,y) \in V_x$ for each $x \in C$, so each neighborhood of $(\infty,y)$ intersects all but finitely many members of the infinite locally finite family $\{V_x : x \in C\}$, which is a contradiction.
\end{proof}

The following lemma can safely be left to the reader.
\begin{lemma} \label{card}
Let $X$ be a space and $x\in X$.  If $X\setminus\{x\}$ is $(\omega_1,\omega)$-paracompact or $P$-paracompact, then $X$ has the same property.
\end{lemma}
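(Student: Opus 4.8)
The plan is to treat the two cases ($(\omega_1,\omega)$-paracompact and $P$-paracompact) by one construction, exploiting that $X\setminus\{x\}$ is an \emph{open} subspace of $X$ (as $X$ is $T_1$), so that sets open in $X\setminus\{x\}$ are open in $X$. Given an open cover $\mathcal{U}$ of $X$, I would first fix $U_0\in\mathcal{U}$ with $x\in U_0$ and, by regularity, an open $W$ with $x\in W\subseteq\cl{W}\subseteq U_0$. The role of $W$ is to provide a neighbourhood of $x$ whose closure the ``generic'' part of a refinement can be kept away from; the part of a refinement that does approach $x$ will be forced to lie inside $U_0$, and can then be collapsed to the single set $U_0$.

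Concretely, I would apply the hypothesis to the open cover
\[
\mathcal{U}^\dagger \;=\; \{\, U\cap(X\setminus\cl{W}) : U\in\mathcal{U}\,\}\ \cup\ \{\, U_0\setminus\{x\}\,\}
\]
of $X\setminus\{x\}$ (a cover since any $y\ne x$ with $y\in\cl W$ lies in $\cl W\subseteq U_0$, hence in $U_0\setminus\{x\}$; all listed sets are open in $X$). This yields an open refinement $\mathcal{V}$ which is $(\omega_1,\omega)$-locally finite, respectively $P$-locally finite via $\mathcal{V}=\bigcup\{\mathcal{V}_p:p\in P\}$. Choosing for each $V\in\mathcal{V}$ a parent in $\mathcal{U}^\dagger$, I split $\mathcal{V}=\mathcal{V}_1\cup\mathcal{V}_2$, where $\mathcal{V}_1$ consists of those $V$ with a parent of the form $U\cap(X\setminus\cl W)$ and $\mathcal{V}_2$ of the rest. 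Then every $V\in\mathcal{V}_1$ is disjoint from $\cl W$ and contained in some member of $\mathcal{U}$, while every $V\in\mathcal{V}_2$ satisfies $V\subseteq U_0\setminus\{x\}$, so $\bigcup\mathcal{V}_2\subseteq U_0$.

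The final refinement is $\mathcal{W}=\mathcal{V}_1\cup\{U_0\}$, taken in the $P$-case with levels $\mathcal{W}_p=(\mathcal{V}_p\cap\mathcal{V}_1)\cup\{U_0\}$. I would then verify: $\mathcal{W}$ refines $\mathcal{U}$ (immediate from the previous paragraph); $\mathcal{W}$ covers $X$, since $X\setminus\bigcup\mathcal{V}_1\subseteq\{x\}\cup\bigcup\mathcal{V}_2\subseteq U_0$; at any point $y\ne x$ the relevant local finiteness passes from $\mathcal{V}$ to $\mathcal{W}$ verbatim, because a neighbourhood of $y$ inside the open set $X\setminus\{x\}$ is a neighbourhood of $y$ in $X$; and at $x$ the neighbourhood $W$ meets $U_0$ but, by disjointness from $\cl W\supseteq W$, meets no member of $\mathcal{V}_1$ — so each level $\mathcal{W}_p$ is locally finite at $x$. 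For the $(\omega_1,\omega)$ statement the same remark suffices: an uncountable subfamily of $\mathcal{V}_1\cup\{U_0\}$ contains an uncountable — hence, by hypothesis on $\mathcal{V}$, an infinite locally-finite-in-$(X\setminus\{x\})$ — subfamily of $\mathcal{V}_1$, which is then locally finite in $X$.

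The only step needing care is recovering local finiteness at the reinserted point $x$; this is exactly what motivates trapping the ``near-$x$'' part of the refinement inside $U_0$ and discarding it in favour of $U_0$ itself, and choosing $W$ with $\cl{W}\subseteq U_0$ so the surviving sets of $\mathcal{V}_1$ avoid an entire neighbourhood of $x$. The remaining points — that all sets in play are open in $X$, and that intersecting with $\mathcal{V}_1$ and adjoining the fixed set $U_0$ preserves a $P$-ordering — are routine.
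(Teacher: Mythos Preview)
Your argument is correct. The paper does not supply a proof of this lemma (it states only that it ``can safely be left to the reader''), so there is no approach to compare against; your use of regularity to trap the near-$x$ part of the refinement inside a single $U_0$, and then to replace that part by $\{U_0\}$, is exactly the sort of routine verification the authors had in mind.
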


\subsubsection{The Case $P=\K(M)$}

\begin{prop}\label{base_local} Suppose that $X$ is first countable and $\mathcal{V}=\{\mathcal{V}_K: K\in \K(M)\}$ is a $\K(M)$-locally finite family of subsets of $X$.  Then for any $x\in X$ and $K\in \K(M)$, there exists an open neighborhood $T$ of $K$ such that $\mathcal{V}_T = \bigcup\{\mathcal{V}_L : L\in T\}$ is locally finite at $x$.
\end{prop}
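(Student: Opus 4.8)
The plan is to argue by contradiction, using first countability of $X$ to run a diagonal argument against the $\K(M)$-local finiteness of $\mathcal{V}$.

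\emph{Setup.} Fix $x\in X$ and $K\in\K(M)$, and suppose that no open neighborhood $T$ of $K$ in $M$ makes $\mathcal{V}_T$ locally finite at $x$. Fix a compatible metric $d$ on $M$ and set $T_m = \{y\in M : d(y,K) < 1/m\}$ for $m\in\N$; each $T_m$ is an open neighborhood of $K$ and $T_1\supseteq T_2\supseteq\cdots$. Using first countability of $X$, fix a decreasing neighborhood base $\{U_m : m\in\N\}$ at $x$. By our assumption, for each $m$ the family $\mathcal{V}_{T_m} = \bigcup\{\mathcal{V}_L : L\in\K(M),\ L\subseteq T_m\}$ is not locally finite at $x$, so $U_m$ meets infinitely many of its members. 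Choosing inductively, I can pick distinct sets $V_m\in\mathcal{V}_{T_m}$ with $V_m\cap U_m\neq\emptyset$, and then, for each $m$, a compact set $L_m\subseteq T_m$ with $V_m\in\mathcal{V}_{L_m}$.

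\emph{Key step: $L := K\cup\bigcup_m L_m$ is compact.} This is the heart of the argument, and the place where metrizability of $M$ and compactness of $K$ are essential. Because $L_m\subseteq T_m$, every point of $L_m$ lies within distance $1/m$ of $K$. Given a sequence in $L$: if infinitely many of its terms lie in the compact set $K\cup L_1\cup\cdots\cup L_N$ for some fixed $N$, it has a convergent subsequence with limit in $L$; otherwise it has terms $z_j\in L_{m_j}$ with $m_j\to\infty$, whence $d(z_j,K)\to 0$, and taking nearest points in $K$ (which exist by compactness) one extracts a subsequence converging to a point of $K\subseteq L$. So $L$ is sequentially compact, hence compact, and thus $L\in\K(M)$.

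\emph{Conclusion.} Since $L_m\subseteq L$ and $\mathcal{V}$ is $\K(M)$-ordered, $\mathcal{V}_{L_m}\subseteq\mathcal{V}_L$, so every $V_m$ belongs to $\mathcal{V}_L$. The $V_m$ are distinct, and since $U_m\subseteq U_n$ whenever $m\geq n$, the basic neighborhood $U_n$ of $x$ meets $V_m$ for all $m\geq n$, hence meets infinitely many members of $\mathcal{V}_L$; it follows that no neighborhood of $x$ meets only finitely many members of $\mathcal{V}_L$, so $\mathcal{V}_L$ is not locally finite --- contradicting the $\K(M)$-local finiteness of $\mathcal{V}$. The only genuinely delicate point is the compactness of $L$ in the key step (this is exactly where one exploits that $M$ is metrizable, so that $K$ has arbitrarily small neighborhoods $T_m$ which ``shrink toward $K$'' metrically); everything else is routine bookkeeping.
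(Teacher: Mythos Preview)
Your proof is correct and follows the same contradiction/diagonalization strategy as the paper's, though with a cleaner single diagonal sequence $(V_m)$ in place of the paper's double-indexed family $(V_n^m)$, and a direct sequential-compactness argument for $L$ in $M$ instead of first passing through compactness in the hyperspace $\K(M)$. One small point of interpretation: the paper intends $T$ to be an open neighborhood of $K$ in the hyperspace $\K(M)$ (so that ``$L\in T$'' is literal), whereas you take $T\subseteq M$ open and read ``$L\in T$'' as ``$L\subseteq T$''; since $\{L\in\K(M):L\subseteq T\}$ is Vietoris-open, your version is formally stronger and implies the intended statement.
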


\begin{proof} Suppose the conclusion is not true, which is to say we can find $x$ and $K$ such that for any neighborhood $T$ of $K$, the set $\mathcal{V}_T$ is not locally finite at $x$. Then let $\{B_m: m\in \N\}$ be a decreasing local base at $x$ and $\{T_n: n\in \N\}$ be a decreasing local base at $K$.  So for each $m\in \N$, the set $\{V\in \mathcal{V}_{T_n} : B_m\cap V\neq \emptyset\}$ is infinite for each $n\in\N$.

Fix $m\in\N$. Then for any $n\geq m$, inductively, we can find $V_n^m\in \mathcal{V}_{T_n}\setminus \{V_m^m, \ldots, V_{n-1}^m\}$ such that $B_m\cap V_n^m\neq \emptyset$. We can then find $K_n^m\in T_n$ such that $V_n^m\in \mathcal{V}_{K_n^m}$. 

Since $\{T_n:n\in \N\}$ is a decreasing local base at $K$, the set $\K=\{K_n^m: m,n\in\N,\ m\leq n\}\cup \{K\}$ is a compact subset of $\K(M)$.  Hence $\hat{K} = \bigcup \K$ is a compact subset of $M$ such that $K_n^m\subseteq \hat{K}$ whenever $m\le n$.

Then we have $\{V_n^m: m,n\in\N,\ m\le n\}\subseteq \mathcal{V}_{\hat{K}}$. But since for each $m\in\N$, $B_m$ intersects every $V_n^m$ with $n \ge m$, then $\mathcal{V}_{\hat{K}}$ is not locally finite at $x$, which is a contradiction.
\end{proof}

\begin{prop} \label{wslf_kmlf}
Let $\mathcal{V}$ be a collection of subsets of a space $X$.

If $\mathcal{V}$ is weakly $\sigma$-locally finite, then it is also $\K(M)$-locally finite for some separable metrizable $M$, and the converse is true if $X$ is first countable.
\end{prop}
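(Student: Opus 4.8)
The plan is to prove the two implications separately, using the Tukey-quotient machinery of Lemma~\ref{PlfviaTukey} together with the structure results on $\K(M)$ from Section~2.

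\emph{Weakly $\sigma$-locally finite $\implies$ $\K(M)$-locally finite.} Suppose $\mathcal{V} = \bigcup_n \mathcal{V}_n$ witnesses weak $\sigma$-local finiteness, so that ($\star$) holds. The natural thing to do is to build a separable metrizable $M$ out of the index set $\N$ recording ``which $\mathcal{V}_n$'s are locally finite at $x$'', and exploit that each $\mathcal{V}_n$ is locally finite. Concretely, for each $n$ let $L_n = \bigcup\{\mathcal{L} \in LF(\mathcal{V}) : \mathcal{L} \subseteq \mathcal{V}_n\}$ — this is the union of the locally finite subfamilies of $\mathcal{V}_n$, and since $\mathcal{V}_n$ itself is locally finite we simply get $L_n = \mathcal{V}_n$; but the point of the formulation is that we want a $\K(M)$-ordering whose levels are finite unions of the $\mathcal{V}_n$'s. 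So take $M = \N$ with the discrete topology (a separable metrizable space) and $\K(M) = [\N]^{<\omega}$. For $F \in [\N]^{<\omega}$ put $\mathcal{V}_F = \bigcup_{n \in F} \mathcal{V}_n$; this is a finite union of locally finite families, hence locally finite, and $F \subseteq F'$ gives $\mathcal{V}_F \subseteq \mathcal{V}_{F'}$, and $\bigcup_F \mathcal{V}_F = \mathcal{V}$. Thus $\mathcal{V}$ is $[\N]^{<\omega}$-locally finite, i.e.\ $\K(M)$-locally finite for $M = \N$. (Note ($\star$) was not even needed for this direction — it becomes essential only for the converse.)

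\emph{$\K(M)$-locally finite $+$ $X$ first countable $\implies$ weakly $\sigma$-locally finite.} This is the substantive direction. Fix a $\K(M)$-locally finite $\mathcal{V} = \bigcup\{\mathcal{V}_K : K \in \K(M)\}$. Since $M$ is separable metrizable, fix a countable base $\{U_n : n \in \N\}$ for $M$, closed under finite unions; then every compact $K \subseteq M$ has a neighborhood base among the $U_n$'s. For each $n$ set $\mathcal{W}_n = \bigcup\{\mathcal{V}_K : K \subseteq U_n,\ K \in \K(M)\} = \mathcal{V}_{U_n}$ in the notation of Proposition~\ref{base_local}. These are countably many subfamilies with $\mathcal{V} = \bigcup_n \mathcal{W}_n$ (every $K$ is contained in some $U_n$, e.g.\ take $n$ with $K \subseteq U_n$). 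It remains to verify ($\star$): fix $x \in X$; I claim $\bigcup\{\mathcal{W}_n : \mathcal{W}_n \text{ is locally finite at } x\} = \mathcal{V}$. Given $V \in \mathcal{V}$, pick $K$ with $V \in \mathcal{V}_K$. By Proposition~\ref{base_local} (with this $x$ and $K$) there is an open neighborhood $T$ of $K$ in $M$ such that $\mathcal{V}_T$ is locally finite at $x$; shrinking, choose $U_n$ with $K \subseteq U_n \subseteq T$. Then $\mathcal{W}_n = \mathcal{V}_{U_n} \subseteq \mathcal{V}_T$ is locally finite at $x$, and $V \in \mathcal{V}_K \subseteq \mathcal{W}_n$. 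Hence $V$ lies in the union on the left, proving ($\star$), so $\mathcal{V} = \bigcup_n \mathcal{W}_n$ witnesses weak $\sigma$-local finiteness.

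\emph{Main obstacle.} The forward direction is essentially bookkeeping, and for the converse the real work has already been done and packaged as Proposition~\ref{base_local}; the only delicate point is making sure the countable family $\{\mathcal{W}_n\}$ simultaneously (a) unions to all of $\mathcal{V}$ and (b) has the property that for each $x$ and each $V \in \mathcal{V}$ some $\mathcal{W}_n$ containing $V$ is locally finite at $x$ — which is exactly what the neighborhood-base-closed-under-the-$U_n$'s setup delivers via Proposition~\ref{base_local}. I would double-check that Proposition~\ref{base_local} was stated (as it is) for arbitrary $\K(M)$-locally finite \emph{families} and not just bases, so that no first-countability hypothesis beyond the one already invoked is silently needed; it is, so the argument goes through verbatim.
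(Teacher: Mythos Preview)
Your forward direction contains a genuine error: you assert that each $\mathcal{V}_n$ is itself locally finite, but this is \emph{not} part of the definition of weakly $\sigma$-locally finite. Condition $(\star)$ only says that for each fixed point $x$, the union of those $\mathcal{V}_n$ which happen to be locally finite \emph{at $x$} already recovers all of $\mathcal{V}$; a given $\mathcal{V}_n$ may well fail to be locally finite at many points of $X$. Consequently your family $\mathcal{V}_F=\bigcup_{n\in F}\mathcal{V}_n$ need not be locally finite, and the argument collapses. (Your own parenthetical remark that $(\star)$ ``was not even needed for this direction'' is the tell: $(\star)$ is the entire content of the hypothesis, and if you have not used it you have only shown that $\sigma$-locally finite implies $\K(\N)$-locally finite.) The paper's proof is quite different and does use $(\star)$: one encodes each $V\in\mathcal{V}$ by the characteristic function $\sigma_V\in\{0,1\}^\N$ of the set $\{n:V\in\mathcal{V}_n\}$, takes $M=\{\sigma_V:V\in\mathcal{V}\}\subseteq\{0,1\}^\N$, and for $K\in\K(M)$ sets $\mathcal{V}_K=\{V:\sigma_V\in K\}$. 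Local finiteness of $\mathcal{V}_K$ at a given $x$ then comes from covering the compact set $K$ by the open sets $\{\sigma:\sigma(n)=1\}$ for those $n$ with $\mathcal{V}_n$ locally finite at $x$ (which is a cover precisely by $(\star)$) and extracting a finite subcover.

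Your converse is essentially the paper's argument, with one slip worth noting. Proposition~\ref{base_local} produces an open neighborhood $T$ of $K$ in the hyperspace $\K(M)$ (look at the proof: $L\in T$ for compact $L$), not an open subset of $M$ containing $K$; and the sets $\{L\in\K(M):L\subseteq U_n\}$, for $U_n$ ranging over a base of $M$, do \emph{not} form a neighborhood base at $K$ in the Vietoris topology (already for $K$ a doubleton they miss basic neighborhoods of the form $\langle V_1,V_2\rangle$ with $V_1,V_2$ disjoint). So the step ``shrinking, choose $U_n$ with $K\subseteq U_n\subseteq T$'' does not go through as written. The fix is exactly what the paper does: take instead a countable base $\mathcal{B}$ for $\K(M)$ and put $\mathcal{W}_B=\bigcup\{\mathcal{V}_L:L\in B\}$ for $B\in\mathcal{B}$; then one can legitimately shrink $T$ to some $B\in\mathcal{B}$.
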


\begin{proof}
Write $\mathcal{V} = \bigcup_n \mathcal{V}_n$ where (\ref{wslf}) is satisfied for each $x\in X$.  Now, for each $V \in \mathcal{V}$, define $\sigma_V \in \{0,1\}^\N$ by $\sigma_V(n) = 1$ if $V\in\mathcal{V}_n$ and $0$ if $V\not\in\mathcal{V}_n$.
Then let $M = \{\sigma_V : V\in\mathcal{V}\} \subseteq \{0,1\}^\N$.  For any compact subset $K$ of $M$, let $\mathcal{V}_K = \{V\in\mathcal{V} : \sigma_V \in K\}$.  Then $\mathcal{V} = \bigcup\{\mathcal{V}_K : K\in\K(M)\}$ and $\mathcal{V}_K \subseteq \mathcal{V}_L$ whenever $K,L\in\K(M)$ and $K\subseteq L$.

Now we check that each $\mathcal{V}_K$ is locally finite in $X$.  Fix $K$ in $\K(M)$ and define $U_n = \{\sigma\in M : \sigma(n) = 1\}$ for each $n\in\N$.  For any $x\in X$, we claim that $\{U_n : \mathcal{V}_n \text{ is locally finite at } x\}$ is an open cover of $K$.  Indeed, if $\delta$ is in $K$, then pick $V\in\mathcal{V}$ such that $\delta = \sigma_V$.  Then (\ref{wslf}) implies that there is an $m\in\N$ such that $V\in\mathcal{V}_m$ and $\mathcal{V}_m$ is locally finite at $x$.  It follows that $\delta$ is in $U_m$.

Then for each $x\in X$, there is a finite set $F_x\subseteq\N$ such that $K \subseteq \bigcup_{n\in F_x} U_n$ and $\mathcal{V}_n$ is locally finite at $x$ for each $n\in F_x$.  We now have:
\[\mathcal{V}_K \subseteq \left\{V\in\mathcal{V} : \sigma_V \in \bigcup_{n\in F_x} U_n\right\} = \bigcup_{n\in F_x} \mathcal{V}_n,\]
and so $\mathcal{V}_K$ is locally finite at $x$, and $\mathcal{V}$ is $\K(M)$-locally finite.

For the converse, write $\mathcal{V} = \bigcup\{\mathcal{V}_K : K\in\K(M)\}$ where each $\mathcal{V}_K$ is locally finite and $K\subseteq L$ implies $\mathcal{V}_K\subseteq \mathcal{V}_L$.  Fix a countable base $\B$ for $\K(M)$ and define $\mathcal{V}_B = \bigcup\{\mathcal{V}_K : K\in B\}$ for each $B\in\B$.  Then for each $x\in X$, Proposition~\ref{base_local} guarantees that $\bigcup\{\mathcal{V}_B : \mathcal{V}_B \text{ is locally finite at } x\} = \mathcal{V}$.  Hence, $\mathcal{V}$ is weakly $\sigma$-locally finite since $\mathcal{B}$ is countable.
\end{proof}

\begin{lemma} \label{wslf_to_srlf}
Let $\mathcal{V}$ be a weakly $\sigma$-locally finite family of subsets of a space $X$, and write $\mathcal{V}=\bigcup_n \mathcal{V}_n$ satisfying (\ref{wslf}) of the definition. For each $n$ in $\N$, define $X_n = \{x\in X : \mathcal{V}_n \text{ is locally finite at } x\}$ and  $\mathcal{W}_n = \{ V \cap X_n : V \in \mathcal{V}_n\}$. Define $\mathcal{W}=\bigcup_n \mathcal{W}_n$.

Then each $\mathcal{W}_n$ is relatively locally finite, so $\mathcal{W}$ is $\sigma$-relatively locally finite.  Moreover, if $\mathcal{V}$ is an open cover, then $\mathcal{W}$ is an open refinement of $\mathcal{V}$, and if $\mathcal{V}$ is a base for $X$, then so is $\mathcal{W}$.
\end{lemma}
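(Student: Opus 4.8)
The plan is to first record that each $X_n$ is open, since everything else follows easily from this. Indeed, if $x\in X_n$ then there is an open $U\ni x$ meeting only finitely many members of $\mathcal{V}_n$, and the same $U$ witnesses that every $y\in U$ lies in $X_n$; hence $X_n$ is open. Consequently, if the $\mathcal{V}_n$ consist of open sets then so do the $\mathcal{W}_n$, since $V\cap X_n$ is an intersection of two open sets.

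Next I would verify that each $\mathcal{W}_n$ is relatively locally finite, i.e.\ locally finite in $\bigcup\mathcal{W}_n$. Observe $\bigcup\mathcal{W}_n = X_n\cap\bigcup\mathcal{V}_n\subseteq X_n$. Fix $x\in X_n$ and pick an open $U\ni x$ meeting only finitely many $V\in\mathcal{V}_n$; then $U$ meets $V\cap X_n$ only when it meets $V$, so $U$ meets only finitely many members of $\mathcal{W}_n$. Thus $\mathcal{W}_n$ is locally finite at every point of $X_n$, and in particular at every point of $\bigcup\mathcal{W}_n$, so $\mathcal{W}_n$ is relatively locally finite. Since $\mathcal{W}=\bigcup_n\mathcal{W}_n$ is a countable union of such families, it is $\sigma$-relatively locally finite by definition.

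Finally I would dispatch the cover and base claims, both of which hinge on a single consequence of (\ref{wslf}): for every $V\in\mathcal{V}$ and every $x\in X$ there is an $n$ with $V\in\mathcal{V}_n$ and $x\in X_n$. If $\mathcal{V}$ is an open cover and $x\in X$, choose $V\in\mathcal{V}$ with $x\in V$, then such an $n$; then $x\in V\cap X_n\in\mathcal{W}_n\subseteq\mathcal{W}$, and since each member of $\mathcal{W}$ is contained in some member of $\mathcal{V}$, the family $\mathcal{W}$ is an open refinement of $\mathcal{V}$. If instead $\mathcal{V}$ is a base and $x\in U$ is open, choose $V\in\mathcal{V}$ with $x\in V\subseteq U$ and then $n$ as above; then $x\in V\cap X_n\subseteq U$ with $V\cap X_n\in\mathcal{W}$, so $\mathcal{W}$ is a base.

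I do not anticipate a genuine obstacle here: the argument is a direct unwinding of the definitions together with condition (\ref{wslf}). The only point needing care is the openness of $X_n$, without which the members of $\mathcal{W}_n$ need not be open and the relative-local-finiteness step would have to be restated in the subspace topology of $\bigcup\mathcal{W}_n$.
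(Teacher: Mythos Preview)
Your proposal is correct and follows essentially the same approach as the paper: both arguments observe that each $X_n$ is open, note that $\bigcup\mathcal{W}_n\subseteq X_n$ to obtain relative local finiteness of $\mathcal{W}_n$, and then use condition~(\ref{wslf}) to produce, for each $x\in V\in\mathcal{V}$, an $n$ with $V\in\mathcal{V}_n$ and $x\in X_n$, yielding $x\in V\cap X_n\subseteq V$ for the cover and base claims. The only difference is cosmetic ordering---you front-load the openness of $X_n$, while the paper mentions it when it is needed for the refinement/base step.
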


\begin{proof}
Since $\bigcup\mathcal{W}_n$ is contained in $X_n$ and $\mathcal{V}_n$ is locally finite on $X_n$, then it follows that $\mathcal{W}_n$ is locally finite in its union $\bigcup\mathcal{W}_n$, which proves the first claim.

Note that each $X_n$ is open and $\mathcal{W}$ refines $\mathcal{V}$, so to prove the final two claims, it suffices to check that whenever $x\in V\in\mathcal{V}$, then there is a $W\in\mathcal{W}$ such that $x\in W\subseteq V$. Indeed, if $x\in V\in\mathcal{V}$, then by property (\ref{wslf}), there is an $n\in\N$ such that $V\in\mathcal{V}_n$ and $\mathcal{V}_n$ is locally finite at $x$.  Thus, $W = V\cap X_n$ is in $\mathcal{W}$ and $x \in W \subseteq V$.
\end{proof}

\subsection{Useful Constructions}
Let $Y$ be a space. Let $\Delta$ be the diagonal in $Y \times Y$. We introduce four `machines' for generating examples. We omit the proofs of some simple lemmas.

\paragraph{Machine 1 --- The $X$-machine}
Let $X(Y)$ have the underlying set $(Y \times Y \setminus \Delta) \cup Y$. Isolate all points of $Y^2 \setminus \Delta$. A basic open neighborhood of a point $y$ in $Y \subseteq X(Y)$ is $\{y\} \cup (U \times \{y\}) \cup (\{y\} \times U)$ for any open neighborhood $U$ of $y$ in $Y$.
Note that $X(\R)$ is (homeomorphic) to $\R \times \R$ with points away from the $x$-axis isolated, and points on the $x$-axis have neighborhoods in the shape of an `X' --- and the closed upper half plane of this latter space is Heath's V-space. In other words, $X(\R)$ is a symmetric version of Heath's V-space.

\begin{lemma} \label{X_basic_facts}
For any Hausdorff space $Y$, $X(Y)$ is zero-dimensional and Hausdorff and hence Tychonoff.

The $X$-space, $X(Y)$, is a Moore space if and only if $Y$ is first countable.
Hence, for any directed set $P$, $X(Y)$ is $P$-metrizable if and only if $Y$ is first countable and $X(Y)$ is $P$-paracompact.
\end{lemma}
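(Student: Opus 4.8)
\textit{Plan.} I would prove the three assertions in order; the recurring tool is a precise comparison between neighbourhoods of a point $y \in Y$ inside $X(Y)$ and inside $Y$. Throughout, write $N(y,U) = \{y\} \cup ((U\times\{y\})\setminus\Delta) \cup ((\{y\}\times U)\setminus\Delta)$ for $U$ open in $Y$ with $y\in U$; the sets $N(y,U)$, together with the singletons $\{p\}$ for $p \in Y^2\setminus\Delta$, form the base of $X(Y)$ specified in the definition of the $X$-machine.

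\textbf{Part 1 (zero-dimensional, Hausdorff, Tychonoff).} Each $\{p\}$ is clopen since $Y^2\setminus\Delta$ is open and discrete. Each $N(y,U)$ is closed: a point outside it is either an isolated point of $Y^2\setminus\Delta$, or a point $y' \in Y$ with $y' \ne y$, and in the latter case one uses that $Y$ is Hausdorff (hence $T_1$) to pick an open $V \ni y'$ in $Y$ with $y \notin V$, and then a comparison of coordinates gives $N(y',V) \cap N(y,U) = \emptyset$. Hence $X(Y)$ has a clopen base and is zero-dimensional. Hausdorffness is a short three-case check: two isolated points are separated by their singletons; an isolated point $(a,b)$ and a point $y$ are separated using the singleton at $(a,b)$ together with, when $y \in \{a,b\}$, the $T_1$ property of $Y$; and distinct $y,y' \in Y$ are separated by $N(y,U)$ and $N(y',U')$ for disjoint open $U \ni y$, $U' \ni y'$ in $Y$. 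A zero-dimensional Hausdorff space is Tychonoff.

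\textbf{Part 2 ($X(Y)$ Moore $\iff$ $Y$ first countable).} The key fact is: for open neighbourhoods $U,V$ of $y$ in $Y$, $N(y,U) \subseteq N(y,V)$ if and only if $U \subseteq V$; the nontrivial direction holds because, for $u \in U\setminus\{y\}$, the point $(u,y)$ lies in $N(y,U)$ and, having second coordinate $y$, can lie in $N(y,V)$ only if $u \in V$. Consequently $\{N(y,U_n) : n\in\N\}$ is a neighbourhood base at $y$ in $X(Y)$ precisely when $\{U_n : n\in\N\}$ is a neighbourhood base at $y$ in $Y$. For $(\Leftarrow)$: fix for each $y$ a decreasing local base $\{U_n^y\}_n$ at $y$ in $Y$ and set $\mathcal{G}_n = \{\{p\} : p\in Y^2\setminus\Delta\} \cup \{N(y,U_n^y) : y\in Y\}$, an open cover of $X(Y)$. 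Since $N(y',U)\cap Y = \{y'\}$, the star of $y \in Y$ in $\mathcal{G}_n$ is exactly $N(y,U_n^y)$, and these form a neighbourhood base at $y$ by the key fact; for an isolated point $p = (a,b)$ with $a \ne b$, the only members of $\mathcal{G}_n$ besides $\{p\}$ that could contain $p$ are $N(a,U_n^a)$ and $N(b,U_n^b)$, and since $Y$ is $T_1$ neither does once $n$ is large, so the star of $p$ in $\mathcal{G}_n$ is eventually $\{p\}$. Thus $\{\mathcal{G}_n\}$ is a development, and since $X(Y)$ is regular (being Tychonoff by Part 1) it is a Moore space. For $(\Rightarrow)$: a Moore space is first countable, so $y$ has a countable neighbourhood base in $X(Y)$, which contains basic neighbourhoods $N(y,V_n)$ forming a (still countable) neighbourhood base at $y$ in $X(Y)$; by the key fact $\{V_n\}_n$ is then a countable local base at $y$ in $Y$, so $Y$ is first countable.

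\textbf{Part 3 (the ``Hence'') and the main obstacle.} For $(\Leftarrow)$: if $Y$ is first countable then $X(Y)$ is a Moore space by Part 2, so if $X(Y)$ is also $P$-paracompact, Lemma~\ref{ppms_to_pnm}(i) gives that $X(Y)$ is $P$-metrizable; this is the substantive implication. For $(\Rightarrow)$: from a $(P\times\N)$-locally finite base of $X(Y)$, refining an arbitrary open cover by those base members contained in some element of the cover produces a $(P\times\N)$-locally finite, hence $P$-locally finite, open refinement, so $X(Y)$ is $P$-paracompact; and a $(P\times\N)$-locally finite base also makes $X(Y)$ first countable (Lemmas~\ref{Pcalom1om_imp_relom1om} and~\ref{om1omMetis1o}), whence $Y$ is first countable by Part 2. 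The only place genuine care is needed is Part 2 --- verifying that $\{\mathcal{G}_n\}$ really is a development, especially that the star of an isolated point eventually collapses to its singleton (the one spot where the $T_1$ property of $Y$ is essential) --- together with keeping the comparison $N(y,U)\subseteq N(y,V) \iff U\subseteq V$ straight throughout the neighbourhood-base arguments.
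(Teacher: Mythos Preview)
The paper omits the proof of this lemma, so there is no original argument to compare against. Your proofs of Part~1, Part~2, and the $(\Leftarrow)$ direction of Part~3 are correct and carefully organized; the device of isolating the equivalence $N(y,U)\subseteq N(y,V)\iff U\subseteq V$ is exactly what drives the Moore-space characterization, and your invocation of Lemma~\ref{ppms_to_pnm}(i) for the substantive implication in Part~3 is the intended one.

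There is, however, a genuine gap in the $(\Rightarrow)$ direction of Part~3. You deduce that $X(Y)$ is first countable from the existence of a $(P\times\N)$-locally finite base by appealing to Lemmas~\ref{Pcalom1om_imp_relom1om} and~\ref{om1omMetis1o}, but Lemma~\ref{Pcalom1om_imp_relom1om} requires $P$ (hence $P\times\N$) to have calibre $(\omega_1,\omega)$, which is not assumed. In fact this direction of the ``Hence'' clause is false as stated: take $Y=\omega_1+1$, which is compact Hausdorff but not first countable. Then $X(Y)$ has weight $\omega_1$, so by Lemma~\ref{general_P}, for any directed set $P$ \emph{without} calibre $(\omega_1,\omega)$ the space $X(Y)$ is $P$-metrizable --- yet $Y$ is not first countable. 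So the fault lies with the lemma's phrasing rather than with your method; the only direction the paper actually uses (for instance in Lemma~\ref{RCCC}) is the one you proved correctly. For arbitrary $P$ the honest statement is the implication ``$Y$ first countable and $X(Y)$ $P$-paracompact $\Rightarrow$ $X(Y)$ $P$-metrizable''; the full biconditional needs $P$ to have calibre $(\omega_1,\omega)$, and under that hypothesis your argument goes through verbatim.
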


\begin{lemma} \label{RCCC}
Let $Y$ be any space. If $Y$ is RCCC, then $X(Y)$ is $(\omega_1,\omega)$-paracompact. Hence, if $Y$ is RCCC and first countable, then $X(Y)$ is $(\omega_1,\omega)$-metrizable. 
If $Y$ is metrizable and $X(Y)$ is $(\omega_1,\omega)$-paracompact then $Y$ is RCCC.
\end{lemma}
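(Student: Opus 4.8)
The plan is to prove the first two claims by directly exhibiting an $(\omega_1,\omega)$-locally finite open refinement (respectively base) assembled from the standard ``cross'' neighborhoods $N(y,U):=\{y\}\cup((U\setminus\{y\})\times\{y\})\cup(\{y\}\times(U\setminus\{y\}))$ of the points $y\in Y$ together with the singletons of isolated points, exploiting the characterization that $Y$ is RCCC precisely when every uncountable subset of $Y$ contains an infinite subset which is closed discrete in $Y$ (the contrapositive of the definition). The third claim I would argue by contrapositive: if $Y$ is metrizable and not RCCC --- equivalently, as noted in the text, has an uncountable compact subset $K$ --- then one explicit open cover of $X(Y)$ admits no $(\omega_1,\omega)$-locally finite open refinement.

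For the first claim, let $\mathcal{U}$ be an open cover of $X(Y)$; for each $y\in Y$ choose $W_y\in\mathcal{U}$ with $y\in W_y$ and an open $U_y\ni y$ in $Y$ with $N(y,U_y)\subseteq W_y$, and put $\mathcal{V}=\{N(y,U_y):y\in Y\}\cup\{\{p\}:p\in Y^2\setminus\diag\}$, an open refinement of $\mathcal{U}$. It then suffices to show each of the two pieces is $(\omega_1,\omega)$-locally finite, since a union of countably many $(\omega_1,\omega)$-locally finite families is $(\omega_1,\omega)$-locally finite (an uncountable subfamily meets one piece uncountably). The key observation is that for distinct $y,w\in Y$ and open $W\ni w$ the sets $N(y,U_y)$ and $N(w,W)$ meet if and only if $w\in U_y$ and $y\in W$; hence, whenever $S'\subseteq Y$ is closed discrete in $Y$, the family $\{N(y,U_y):y\in S'\}$ is locally finite in $X(Y)$ (each point of $Y^2\setminus\diag$ lies in at most two members, and at $w\in Y$ one uses a neighborhood meeting $S'$ in at most $\{w\}$). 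By the RCCC characterization this makes $\{N(y,U_y):y\in Y\}$ an $(\omega_1,\omega)$-locally finite family. For the singletons, given uncountable $T\subseteq Y^2\setminus\diag$: either some vertical fibre $\{a:(a,b_0)\in T\}$ is uncountable, in which case applying RCCC to it yields an infinite closed-discrete $R$ and we set $T'=\{(a,b_0):a\in R\}$; or every vertical fibre is countable, forcing $\pi_2(T)$ uncountable, and a one-point selection $b\mapsto(a_b,b)$ along an infinite closed-discrete $R\subseteq\pi_2(T)$ gives $T'=\{(a_b,b):b\in R\}$. In either case a short check using closed discreteness shows $\{\{p\}:p\in T'\}$ is locally finite, so this piece is $(\omega_1,\omega)$-locally finite too. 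This proves the first claim, and the second follows: if $Y$ is also first countable then $X(Y)$ is a Moore space by Lemma~\ref{X_basic_facts}, and a Moore space that is $(\omega_1,\omega)$-paracompact is $(\omega_1,\omega)$-metrizable --- take an $(\omega_1,\omega)$-locally finite open refinement $\mathcal{U}_n$ of each stage $\mathcal{G}_n$ of a development; then $\bigcup_n\mathcal{U}_n$ is a base (standard Moore-space argument, as $\mathcal{U}_n$ refines $\mathcal{G}_n$) and is $(\omega_1,\omega)$-locally finite.

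For the converse, let $Y$ be metrizable with metric $d$ and have an uncountable compact $K$; I claim $X(Y)$ is not $(\omega_1,\omega)$-paracompact. Use the open cover $\mathcal{U}=\{N(y,Y):y\in Y\}$ (it covers, since $(a,b)\in N(a,Y)$ whenever $a\neq b$). Let $\mathcal{V}$ be any open refinement; for each $y\in Y$ pick $V_y\in\mathcal{V}$ with $y\in V_y$, so $y\in V_y\subseteq N(y,Y)$ forces $N(y,U_y)\subseteq V_y$ for some open $U_y\ni y$, and fix $\varepsilon_y>0$ with $B(y,\varepsilon_y)\subseteq U_y$. Since $K=\bigcup_m\{z\in K:\varepsilon_z>1/m\}$, some $K_m$ is uncountable; fix such an $m$, put $\varepsilon=1/m$, and consider the uncountable subfamily $\{V_y:y\in K_m\}$ of $\mathcal{V}$ (these are distinct, as $y\in V_y\subseteq N(y,Y)$). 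For any infinite $S'\subseteq K_m$, compactness of $K$ gives a limit point $w\in K$ of $S'$; any neighborhood of the point $w\in X(Y)$ contains some $N(w,W)$, and $W\cap B(w,\varepsilon)$ still contains infinitely many $y\in S'\setminus\{w\}$, each with $w\in B(y,\varepsilon)\subseteq U_y$, so $(w,y)\in N(w,W)\cap N(y,U_y)$ and hence $V_y$ meets $N(w,W)$. Thus every neighborhood of $w$ meets infinitely many members of $\{V_y:y\in S'\}$, so that family is not locally finite; therefore $\{V_y:y\in K_m\}$ has no infinite locally finite subfamily, $\mathcal{V}$ is not $(\omega_1,\omega)$-locally finite, and $X(Y)$ is not $(\omega_1,\omega)$-paracompact. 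By the equivalence of RCCC with total imperfectness for metrizable spaces, this yields the third claim.

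I expect the converse to be the crux. The first two claims are largely bookkeeping once one reformulates RCCC via infinite closed-discrete subsets, whereas the converse requires choosing the right cover ($\{N(y,Y):y\in Y\}$ rather than anything finer), using metrizability to fit genuine balls inside the refining neighborhoods $U_y$, and inserting the pigeonhole step that bounds the radii $\varepsilon_y$ uniformly below on an uncountable set before compactness of $K$ can be invoked to locate the offending point $w$.
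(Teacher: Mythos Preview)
Your proof is correct, and for the forward direction and the converse alike it uses the same core computations as the paper (cross neighborhoods, pigeonhole on radii), but there are two genuine differences in packaging worth noting.

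For the first claim, the paper chooses a more economical refinement: it keeps one cross $U_y$ per $y\in Y$ and then adds only the singletons of points \emph{not already covered} by $\bigcup_y U_y$. That leftover family is automatically locally discrete (each such point is isolated and has a neighborhood missing $Y$ and hence missing every $U_y$), so only the crosses need the RCCC argument. Your choice to throw in \emph{all} singletons $\{\{p\}:p\in Y^2\setminus\Delta\}$ is valid but forces the extra case-split on fibres and the second invocation of RCCC. Nothing is wrong, but the paper's version is shorter and avoids that bookkeeping.

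For the converse, the paper argues directly rather than by contrapositive: assuming $X(Y)$ is $(\omega_1,\omega)$-paracompact, it extracts radii $n_y$ so that $\{U_y:y\in Y\}$ is $(\omega_1,\omega)$-locally finite, then for an \emph{arbitrary} uncountable $A\subseteq Y$ it pigeonholes to constant radius, pulls out an infinite $S$ with $\{U_y:y\in S\}$ locally finite, and checks that this $S$ is closed discrete in $Y$. Your contrapositive route via an uncountable compact $K$ (using the metrizable equivalence ``RCCC $\Leftrightarrow$ totally imperfect'') reaches the same pigeonhole-on-radii step but then appeals to compactness to find the accumulation point $w$. Both work; the paper's direct argument has the mild advantage of not passing through the total-imperfectness reformulation, while yours makes the failure of local finiteness very concrete at a single explicit point.
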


\begin{proof}
Any open cover of $X(Y)$ has an open refinement of the form $\mathcal{U} = \mathcal{U}_1 \cup \mathcal{U}_2$, where $\mathcal{U}_1$ contains one basic open neighborhood $U_y$ for each point $y\in Y \subseteq X(Y)$, and $\mathcal{U}_2$ consists of the singletons for each point in $X(Y)$ not already covered by $\mathcal{U}_1$.  Notice that $\mathcal{U}_2$ is locally discrete.  Now assume $Y$ is RCCC. Then to show $X(Y)$ is $(\omega_1,\omega)$-paracompact, it suffices to show that $\mathcal{U}_1$ is $(\omega_1,\omega)$-locally finite.

Suppose $\mathcal{V}$ is an uncountable subset of $\mathcal{U}_1$, so $\mathcal{V} = \{U_y : y\in A\}$ for some uncountable $A\subseteq Y$.  Since $Y$ is RCCC, then $A$ is not relatively countably compact in $Y$, so there is an infinite subset $S$ of $A$ that is closed and discrete in $Y$.  It is then easy to check that $\mathcal{W} = \{U_y : y\in S\}$ is an infinite locally finite subset of $\mathcal{V}$.
Thus, the first claim has been proven, and the second claim follows from Lemma~\ref{X_basic_facts}.

To prove the final claim, fix a metric generating the topology on $Y$, and for any $y\in Y$ and $n\in\N$, let $B_n(y)$ denote the open ball of radius $\frac{1}{n}$ centered at $y$.  Assuming $X(Y)$ is $(\omega_1,\omega)$-paracompact, then for each $y\in Y$, we can find an $n_y\in\N$ such that $\{ U_y = \{y\}\cup(B_{n_y}(y)\times\{y\})\cup(\{y\}\times B_{n_y}(y)) : y\in Y\}$ is $(\omega_1,\omega)$-locally finite in $X(Y)$.  Let $A$ be any uncountable subset of $Y$. By counting, there is an uncountable subset $A_1$ of $A$ and an $m\in\N$ such that $n_y = m$ for all $y\in A_1$.  Then there is an infinite subset $S$ of $A_1$ such that $\{U_y : y\in S\}$ is locally-finite.

We claim $S$ is closed and discrete in $Y$, which shows that $A$ is not relatively countably compact in $Y$, so that $Y$ is RCCC.  To that end, suppose  some $z\in Y$ is in the closure of $S\setminus\{z\}$.  So any basic neighborhood $B_n(z)$ of $z$ in $Y$ contains an element $y_n$ of $S\setminus\{z\}$.  Hence, any basic neighborhood $\{z\}\cup(B_n(z)\times\{z\})\cup(\{z\}\times B_n(z))$ of $z$ in $X(Y)$ intersects $U_{y_k}$ for each $k\ge\max\{n,m\}$, contradicting the fact that $\{U_y : y\in S\}$ is locally finite.
\end{proof}

\begin{lemma} \label{not_rp}
Let $Y$ be a space such that $w(Y) < |Y|$. Then $X(Y)$ is not $w(Y)$-relatively paracompact. 
\end{lemma}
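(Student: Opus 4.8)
The plan is to exhibit one open cover of $X(Y)$ that admits no $w(Y)$-relatively locally finite open refinement. Write $\kappa = w(Y)$; note first that $\kappa$ is infinite (a $T_1$ space with a finite base has only finitely many open sets, hence is finite and discrete, so has weight equal to its cardinality, contradicting $w(Y) < |Y|$). For the cover I will take $\mathcal{C} = \{C_y : y \in Y\}$, where $C_y$ is the basic neighbourhood of $y$ in $X(Y)$ obtained by taking the open set $U = Y$, i.e. $C_y = \{y\} \cup (Y \times \{y\}) \cup (\{y\} \times Y)$ regarded as a subset of $X(Y)$ (so the diagonal point is absent); this covers $X(Y)$ because every isolated point $(a,b)$ lies in $C_a$. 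Given any open refinement $\mathcal{V}$ of $\mathcal{C}$, for each $y \in Y$ I fix $V_y \in \mathcal{V}$ with $y \in V_y$; since $C_y$ is the only member of $\mathcal{C}$ containing the point $y$ of $Y$, we get $V_y \subseteq C_y$, and openness of $V_y$ at $y$ yields an open $W_y \ni y$ in $Y$ with $\{y\} \cup (W_y \times \{y\}) \cup (\{y\} \times W_y) \subseteq V_y$, using that such sets form a neighbourhood base at $y$ in $X(Y)$.

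Next, suppose for contradiction that $\mathcal{V} = \bigcup_{\xi<\kappa}\mathcal{V}_\xi$ with each $\mathcal{V}_\xi$ locally finite in its union $\bigcup\mathcal{V}_\xi$, and let $\xi(y)<\kappa$ be an index with $V_y \in \mathcal{V}_{\xi(y)}$. Fix a base $\mathcal{B}$ for $Y$ of size $\kappa$ and, for each $y$, some $B_y \in \mathcal{B}$ with $y \in B_y \subseteq W_y$. The assignment $y \mapsto (\xi(y), B_y)$ maps $Y$ into $\kappa \times \mathcal{B}$, a set of size $\kappa$, while $|Y| > \kappa$; hence some fibre $A = \{y \in Y : \xi(y) = \xi^*,\ B_y = B^*\}$ has $|A| > \kappa$. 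For every $y \in A$ we then have $B^* \subseteq W_y$, so $\{y\} \cup (B^* \times \{y\}) \cup (\{y\} \times B^*) \subseteq V_y \in \mathcal{V}_{\xi^*}$, and also $A \subseteq B^*$. Now $A$, as a subspace of $Y$, has weight $\le w(Y) = \kappa < |A|$, and since a discrete space has weight equal to its cardinality, $A$ is not discrete; fix $z \in A$ that is not isolated in $A$. As $Y$ is $T_1$, every neighbourhood of $z$ in $Y$ meets $A$ in infinitely many points.

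Finally I will check that $\mathcal{V}_{\xi^*}$ is not locally finite at the point $z$ of $Y \subseteq X(Y)$, even though $z \in V_z \in \mathcal{V}_{\xi^*}$ --- the contradiction that finishes the proof. Let $N = \{z\} \cup (U \times \{z\}) \cup (\{z\} \times U)$ be an arbitrary basic neighbourhood of $z$ in $X(Y)$, with $U$ open and containing $z$ in $Y$. Choosing $y \in A \cap U$ with $y \ne z$, we get $(z,y) \in B^* \times \{y\} \subseteq V_y$ (because $z \in B^*$) and $(z,y) \in \{z\} \times U \subseteq N$ (because $y \in U$), so $V_y$ meets $N$. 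The $V_y$ with $y \in A$ are pairwise distinct (distinct points of $Y$ lie in distinct $C_y$'s, hence in distinct $V_y$'s), and there are infinitely many eligible $y$, so $N$ meets infinitely many members of $\mathcal{V}_{\xi^*}$. Thus no open refinement of $\mathcal{C}$ is $\kappa$-relatively locally finite, i.e. $X(Y)$ is not $w(Y)$-relatively paracompact.

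I expect the main obstacle --- and the point where the hypothesis $w(Y) < |Y|$ is used in full strength --- to be arranging that the accumulation point $z$ actually lies in the union of the relevant level $\mathcal{V}_{\xi^*}$: relative local finiteness imposes no condition at points outside that union, so merely producing some point of $X(Y)$ whose neighbourhoods meet infinitely many $V_y$'s does not suffice. This forces the pigeonhole to be pushed to a fibre $A$ of size strictly greater than $w(Y)$, so that $A$ is large relative to its own weight and therefore not discrete, hence contains one of its own limit points $z$; then $z \in V_z \subseteq \bigcup \mathcal{V}_{\xi^*}$ automatically. The remaining work is routine manipulation with the $X$-space topology.
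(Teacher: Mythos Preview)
Your proof is correct. Both your argument and the paper's start from the same cover $\{C_y : y \in Y\}$, pick one refining set through each $y$, and use a pigeonhole over a base of size $\kappa$ to produce more than $\kappa$ many points sharing a common basic set $B^*$. The difference is only in how the contradiction is extracted. The paper first pigeonholes on the level alone, then uses relative local finiteness on $Y_\beta$ to shrink the $U_y$'s to a pairwise disjoint family of basic neighbourhoods, after which any two points $y_1,y_2$ in the common-$B$ set $S$ give $(y_1,y_2)\in V_{y_1}\cap V_{y_2}$, contradicting disjointness. You instead pigeonhole on level and base simultaneously, observe that the resulting set $A$ has $|A|>w(A)$ and hence is not discrete, and use a non-isolated $z\in A$ to witness the failure of local finiteness of $\mathcal{V}_{\xi^*}$ at a point of its own union. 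Your route trades the shrinking-to-disjoint step for the weight/cardinality observation about $A$; both are short, and the arguments are essentially equivalent in strength and length.
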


\begin{proof}
Fix a base $\B$ for $Y$ with cardinality $\kappa = w(Y)$, and suppose $X(Y)$ is $\kappa$-relatively paracompact.  Then there is a collection $\mathcal{U} = \{U_y : y\in Y\} = \bigcup \{\mathcal{U}_\alpha : \alpha<\kappa\}$ where each $U_y$ is a basic neighborhood of $y$ in $X(Y)$ and each $\mathcal{U}_\alpha$ is locally finite in its union.

Since $|Y|>\kappa$, there is a $\beta<\kappa$ such that $Y_\beta = Y \cap (\bigcup\mathcal{U}_\beta)$ has size greater than $\kappa$.  Note that $\{U_y : y\in Y_\beta\} = \mathcal{U}_\beta$ is locally finite on $Y_\beta \subseteq  X(Y)$, so by shrinking the elements of $\mathcal{U}_\beta$, we can obtain a collection $\mathcal{V} = \{V_y : y\in Y_\beta\}$ where each $V_y$ is a basic $X(Y)$-neighborhood of $y$ that intersects only finitely many other members of $\mathcal{V}$.  In fact, any two distinct members of $\mathcal{V}$ will intersect in at most two points, so without loss of generality, $\mathcal{V}$ is actually pairwise disjoint.

For each $y\in Y_\beta$, write $V_y = \{y\}\cup(B_y\times\{y\})\cup(\{y\}\times B_y)$ for some $B_y\in\B$.  Then there is a $B\in\B$ and a subset $S$ of $Y_\beta$ such that $|S|>\kappa$ and $B_y = B$ for every $y\in S$. Note that $S$ is a subset of $B$.  Now pick any two distinct points $y_1$ and $y_2$ in $S$.  Then the point $(y_1,y_2)$ is in the intersection of $V_{y_1}$ and $V_{y_2}$, contradicting that $\mathcal{V}$ is pairwise disjoint.
\end{proof}

For any space $Y$ denote by $Y_\omega$ the space with underlying set $Y$ and topology obtained by adding all co-countable subsets of $Y$ to the original topology on $Y$. Note that if the original topology on $Y$ is Hausdorff then so is $Y_\omega$.

\begin{lemma} \label{co_countable}
Let $Y$ be a space. 

(i) $X(Y_\omega)$ is $P$-paracompact where $P$ is the directed set $[Y]^{\le \omega}$.

(ii) If $w(Y) \cdot \omega_1 < |Y|$, then $X(Y_\omega)$ is not $w(Y)$-relatively paracompact.
\end{lemma}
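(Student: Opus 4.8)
The two parts are handled separately; both rest on analysing the ``X-shaped'' basic neighbourhoods $U_y=\{y\}\cup(W\times\{y\})\cup(\{y\}\times W)$ of points $y\in Y\subseteq X(Y_\omega)$ (where $W$ is open in $Y_\omega$), together with the fact that every co-countable subset of $Y$ is open in $Y_\omega$.

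\emph{Part (i).} Given an open cover of $X(Y_\omega)$, I would first pass, exactly as in the proof of Lemma~\ref{RCCC}, to an open refinement $\mathcal{U}_1\cup\mathcal{U}_2$, where $\mathcal{U}_1=\{U_y:y\in Y\}$ picks out one X-shaped basic neighbourhood $U_y$ of each $y\in Y$ and $\mathcal{U}_2$ is a locally finite family of singletons of isolated points. The heart of the matter is the observation that \emph{for every countable $A\subseteq Y$ the family $\{U_y:y\in A\}$ is locally finite in $X(Y_\omega)$}: at an isolated point $(a,b)$ the neighbourhood $\{(a,b)\}$ meets $U_y$ only for $y\in\{a,b\}$, while at a point $z\in Y$ the set $Z=\{y\in A\setminus\{z\}:z\in W_y\}$ is countable, so $V=Y\setminus Z$ is a co-countable — hence open in $Y_\omega$ — neighbourhood of $z$, and a direct check shows the basic neighbourhood $\{z\}\cup(V\times\{z\})\cup(\{z\}\times V)$ of $z$ meets $U_y$ ($y\in A$) only when $y=z$. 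Granting this, set $\mathcal{U}_C=\{U_y:y\in C\}\cup\mathcal{U}_2$ for $C\in[Y]^{\le\omega}$: the levels increase with $C$, their union is $\mathcal{U}_1\cup\mathcal{U}_2$, and each level is locally finite, being the union of two locally finite families. Hence $\mathcal{U}_1\cup\mathcal{U}_2$ is $[Y]^{\le\omega}$-locally finite, and $X(Y_\omega)$ is $[Y]^{\le\omega}$-paracompact.

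\emph{Part (ii).} Let $\kappa=w(Y)$; the hypothesis forces $|Y|>\omega_1$, so $Y$ is infinite and $\kappa$ is infinite. Fix a base $\B$ for $Y$ with $|\B|=\kappa$, so $\{B\setminus C:B\in\B,\ C\in[Y]^{\le\omega}\}$ is a base for $Y_\omega$. Consider the open cover $\mathcal{U}_0=\{\{y\}\cup(Y\times\{y\})\cup(\{y\}\times Y):y\in Y\}$ of $X(Y_\omega)$, and suppose for contradiction it has a $\kappa$-relatively locally finite open refinement $\mathcal{V}$. For each $y\in Y$ pick $V_y\in\mathcal{V}$ with $y\in V_y$; as $y$ lies in exactly one member of $\mathcal{U}_0$, we get $V_y\subseteq\{y\}\cup(Y\times\{y\})\cup(\{y\}\times Y)$, and $V_y$ contains a basic neighbourhood of $y$ which we take of the form $\{y\}\cup(W_y\times\{y\})\cup(\{y\}\times W_y)$ with $W_y=B_y\setminus C_y$, $B_y\in\B$, $C_y\in[Y]^{\le\omega}$. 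The subfamily $\{V_y:y\in Y\}$ is again $\kappa$-relatively locally finite; splitting it into $\kappa$ subfamilies each locally finite in its union, and then refining the partition by the value of $B_y$ (only $\kappa$ possibilities), a counting argument (using $|Y|>\kappa\cdot\omega_1$ and $|\B|=\kappa$) yields a single $B\in\B$ and a set $S\subseteq B$ with $|S|\ge\omega_2$, $B_y=B$ for all $y\in S$, and $\{V_y:y\in S\}$ locally finite in its union. Shrinking the $V_y$ ($y\in S$) as in the proof of Lemma~\ref{not_rp} — any two distinct X-shaped sets meet in at most two isolated points, which can be deleted — we may assume the $V_y$ ($y\in S$) are pairwise disjoint while still $V_y\supseteq\{y\}\cup((B\setminus C_y)\times\{y\})\cup(\{y\}\times(B\setminus C_y))$. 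Since $(y,y')\in V_y\cap V_{y'}$ whenever $y'\in B\setminus C_y$ and $y\in B\setminus C_{y'}$, and $S\subseteq B$, disjointness forces: for all distinct $y,y'\in S$, $y'\in C_y$ or $y\in C_{y'}$; replacing each $C_y$ by $C_y\cap S$ we may assume $C_y\subseteq S$.

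I expect the only genuine difficulty is the final combinatorial claim: there is no set $S$ with $|S|\ge\omega_2$ carrying a map $y\mapsto C_y\in[S]^{\le\omega}$ such that $y'\in C_y$ or $y\in C_{y'}$ for all distinct $y,y'\in S$. To finish I would choose recursively $y_\xi\in S$ for $\xi\le\omega_1$ with $y_\xi\notin\{y_\eta:\eta<\xi\}\cup\bigcup_{\eta<\xi}C_{y_\eta}$ (possible, since at each stage $\le\omega_1$ one avoids only $\le\omega_1$ points while $|S|\ge\omega_2$); then for every $\eta<\omega_1$ we have $y_{\omega_1}\ne y_\eta$ and $y_{\omega_1}\notin C_{y_\eta}$, so $y_\eta\in C_{y_{\omega_1}}$, whence $\{y_\eta:\eta<\omega_1\}\subseteq C_{y_{\omega_1}}$ — an uncountable subset of a countable set. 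This contradiction completes (ii).
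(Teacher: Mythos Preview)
Your proof is correct and follows the paper's approach closely. Part (i) is essentially identical to the paper (the paper compresses your local-finiteness verification into the single remark that any countable $C\subseteq Y$ is closed discrete in $Y_\omega$). In part (ii) the reduction to a set $S\subseteq B$ with $|S|\ge\omega_2$ and pairwise-disjoint shrunken neighbourhoods is the same as the paper's; only the final combinatorial contradiction differs. The paper avoids your transfinite recursion: it simply fixes any $A\subseteq S$ with $|A|=\omega_1$, sets $A'=A\cup\bigcup_{y\in A}C_y$ (still of size $\omega_1<|S|$), picks $y_1\in S\setminus A'$, and then $y_2\in A\setminus C_{y_1}$ (possible since $|A|=\omega_1>|C_{y_1}|$); then $y_1\notin C_{y_2}$ and $y_2\notin C_{y_1}$ give the contradiction directly. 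Your $\omega_1$-length recursion reaches the same end but is a longer route to the same combinatorial fact.
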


\begin{proof}
To prove (i), it suffices to show that any open cover for $X(Y_\omega)$ of the form $\mathcal{U} = \mathcal{U}_1 \cup \mathcal{U}_2$ as in the proof of Lemma~\ref{RCCC} is $P$-locally finite.  Write $\mathcal{U}_1 = \{U_y : y\in Y_\omega\}$ where each $U_y$ is a basic open neighborhood of $y$ in $X(Y_\omega)$.  Then for any countable subset $C$ of $Y$, we have that $C$ is closed and discrete in $Y_\omega$, so the family $\mathcal{U}_C = \{U_y : y\in C\} \cup \mathcal{U}_2$ is locally finite in $X(Y_\omega)$.  Thus, $\mathcal{U} = \bigcup \{\mathcal{U}_C : C\in P = [Y]^{\le\omega}\}$ is $P$-locally finite.

For the proof of (ii), assume $X(Y_\omega)$ is $\kappa$-relatively paracompact, where $\kappa = w(Y)$, and fix a base $\B$ for $Y$ with size $\kappa$.  Then $\{B \setminus C : B\in\B,\ C \subseteq Y,\ |C|\le\omega\}$ is a base for $Y_\omega$.  By only slightly modifying the proof of Lemma~\ref{not_rp}, we can find a $B\in\B$, a subset $S$ of $B$ with $|S|> \kappa\cdot\omega_1$, and countable sets $C_y \subseteq Y \setminus \{y\}$ for each $y\in S$ such that the collection $\mathcal{V} = \{V_y = \{y\}\cup((B\setminus C_y)\times\{y\}) \cup (\{y\}\times (B\setminus C_y)) : y\in S\}$ is pairwise disjoint.

Choose an arbitrary subset $A$ of $S$ with size $\omega_1$, and let $A' = A\cup(\bigcup\{C_y : y\in A\})$, which also has size $\omega_1$.  Then there is a $y_1$ in $S\setminus A'$ and a $y_2$ in $A\setminus C_{y_1}$.  Hence, we have $y_1\in B\setminus C_{y_2}$ and $y_2\in B\setminus C_{y_1}$, which means $V_{y_1}$ intersects $V_{y_2}$, which is a contradiction.
\end{proof}

\paragraph{Machine 2 --- The Split $X$-machine}

For any point $y$ in $Y$, write $y^+$ for $(y,+)$ and $y^-$ for $(y,-)$. For any subset $S$ of $Y$, let $S^+=\{s^+ : s \in S\}$ and $S^-=\{s^- : s \in S\}$.
Let $S(Y)$ have the underlying set $(Y \times Y \setminus \Delta) \cup Y^+ \cup Y^-$. Isolate all points of $Y^2 \setminus \Delta$. A basic open neighborhood of a point $y^+$ in $S(Y)$ is $\{y^+\} \cup (U \times \{y\})$ for any open neighborhood $U$ of $y$ in $Y$. A basic open neighborhood of a point $y^-$ in $S(Y)$ is $\{y^-\} \cup (\{y\} \times U)$ for any open neighborhood $U$ of $y$ in $Y$.
Note that $S(\R)$ is a symmetric version of Heath's split V-space.

\begin{lemma}
For any Hausdorff space $Y$, $S(Y)$ is zero-dimensional and Hausdorff and hence Tychonoff.

The split $X$-space, $S(Y)$, is a Moore space if and only if $Y$ is first countable. 
Hence, for any directed set $P$, $S(Y)$ is $P$-metrizable if and only if $Y$ is first countable and $S(Y)$ is $P$-paracompact.
\end{lemma}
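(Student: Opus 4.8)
The statement has a routine first part, a substantive equivalence (``$S(Y)$ is a Moore space if and only if $Y$ is first countable''), and a last clause that is essentially a formal consequence; I would take them in that order. For the first part I would show that every structural basic open set of $S(Y)$ is clopen. Singletons of points of $Y^2\setminus\Delta$ are clopen since those points are isolated. For a set $N=\{y^+\}\cup(U\times\{y\})$ with $U$ open in $Y$, I would check that $S(Y)\setminus N$ is open by examining the three kinds of points it may contain: isolated points are immediate; a point $z^+$ with $z\neq y$ has a basic neighborhood $\{z^+\}\cup(V\times\{z\})$ missing $N$ because the isolated parts lie in disjoint ``rows''; and a point $z^-$ (the case $z=y$ included) has a basic neighborhood $\{z^-\}\cup(\{z\}\times V)$ missing $N$, where for $z\neq y$ one uses Hausdorffness of $Y$ to take $y\notin V$, and for $z=y$ one uses that $\{y\}\times V$ can meet $U\times\{y\}$ only on the removed diagonal. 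The neighborhoods of the points $y^-$ are handled symmetrically, so $S(Y)$ has a clopen base. Hausdorffness is then easy: two isolated points, or $y^+$ and $z^+$, or $y^+$ and $z^-$ with $y\neq z$, are separated using disjoint $Y$-neighborhoods of the relevant points, while $y^+$ and $y^-$ are separated by $\{y^+\}\cup(U\times\{y\})$ and $\{y^-\}\cup(\{y\}\times U)$, which overlap only on the removed diagonal. A zero-dimensional $T_1$ space is completely regular, so $S(Y)$ is Tychonoff.

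For the equivalence: if $S(Y)$ is a Moore space then it is first countable, so I would take a countable local base at $y^+$, shrink each member to a basic set $\{y^+\}\cup(U_k\times\{y\})$, and check that $\{U_k:k\in\N\}$ is a local base at $y$ in $Y$ --- given open $V\ni y$, the neighborhood $\{y^+\}\cup(V\times\{y\})$ of $y^+$ contains some $\{y^+\}\cup(U_k\times\{y\})$, which forces $U_k\subseteq V$. Conversely, assuming $Y$ first countable, fix for each $y$ a decreasing countable local base $\{U_n(y):n\in\N\}$ and let $\mathcal{G}_n$ consist of all singletons of points of $Y^2\setminus\Delta$ together with the sets $\{y^+\}\cup(U_n(y)\times\{y\})$ and $\{y^-\}\cup(\{y\}\times U_n(y))$ for $y\in Y$. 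I would verify that $\{\mathcal{G}_n:n\in\N\}$ is a development by computing, for each point $x$, the star $\mathrm{St}(x,\mathcal{G}_n)$ (the union of the members of $\mathcal{G}_n$ containing $x$): at an isolated point $(a,b)$ with $a\neq b$, Hausdorffness gives $\bigcap_nU_n(a)=\{a\}$ and $\bigcap_nU_n(b)=\{b\}$, so for large $n$ the only member of $\mathcal{G}_n$ through $(a,b)$ is $\{(a,b)\}$, whence $\mathrm{St}((a,b),\mathcal{G}_n)=\{(a,b)\}$; at $y^+$ the only member of $\mathcal{G}_n$ through it is $\{y^+\}\cup(U_n(y)\times\{y\})$, which equals its own star and lies eventually inside any prescribed neighborhood of $y^+$; and the point $y^-$ is handled symmetrically. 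As $S(Y)$ is regular by the first part and developable, it is a Moore space.

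For the last clause: the ``if'' direction is immediate, since if $Y$ is first countable then $S(Y)$ is a Moore space, and a $P$-paracompact Moore space is $P$-metrizable by Lemma~\ref{ppms_to_pnm}(i). For the ``only if'' direction, suppose $S(Y)$ is $P$-metrizable with a $(P\times\N)$-locally finite base $\mathcal{B}=\bigcup\{\mathcal{B}_q:q\in P\times\N\}$. Given any open cover $\mathcal{U}$, the subfamily of those $B\in\mathcal{B}$ contained in some member of $\mathcal{U}$ is an open refinement of $\mathcal{U}$ inheriting a $(P\times\N)$-ordering with locally finite levels, so after the usual tidying $S(Y)$ is $P$-paracompact. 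There remains the point that $Y$ must be first countable; by the equivalence above it is enough that $S(Y)$ be first countable. When $P$ has calibre $(\omega_1,\omega)$ --- which is the case in every application --- this is automatic: $S(Y)$ is then $(\omega_1,\omega)$-metrizable by Lemma~\ref{Pcalom1om_imp_relom1om} and hence first countable by Lemma~\ref{om1omMetis1o}. For a general $P$ one instead argues directly from $\mathcal{B}$: the members of $\mathcal{B}$ containing a fixed $y^+$ form a local base there, each of them restricting as in the equivalence above to an open neighborhood of $y$ in $Y$, and one shows this family of restrictions admits a countable cofinal subfamily.

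The real work is the star computation in the middle part, and in particular the verification that the isolated points $(a,b)$ acquire trivial stars --- precisely the place where first countability and Hausdorffness of $Y$ are used. The step I expect to be the main obstacle is the last one: deriving first countability of $S(Y)$, hence of $Y$, from $P$-metrizability alone. This is where one must exploit the special shape of $S(Y)$ --- the dense set of isolated points forcing any base to be thin around the points $y^{\pm}$ --- since for an arbitrary directed set $P$ a $(P\times\N)$-locally finite base can carry an uncountable local base at a point, so the conclusion cannot follow from abstract properties of $P$ alone.
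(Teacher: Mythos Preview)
The paper leaves this lemma unproved, so your write-up is the proof. Your arguments for zero-dimensionality, Hausdorffness, and the Moore equivalence are correct and are presumably what the authors had in mind; likewise the ``if'' direction of the final clause via Lemma~\ref{ppms_to_pnm}(i).

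The ``only if'' direction is where there is a genuine gap, and you have correctly located it. First, your base argument produces only a $(P\times\N)$-locally finite refinement; the ``usual tidying'' you invoke to drop the $\N$ factor is not automatic, though it is harmless whenever $P\times\N =_T P$ (e.g.\ when $P$ has no maximum). The real problem is first countability of $Y$. You are right that it follows when $P$ has calibre $(\omega_1,\omega)$ via Lemmas~\ref{Pcalom1om_imp_relom1om} and~\ref{om1omMetis1o}, but your gestured direct argument for arbitrary $P$ cannot be completed: take $Y=\omega_1+1$ with the order topology and $P=[\omega_1]^{<\omega}$. Then $|Y|=\omega_1$, so $w(S(Y))\le\omega_1$, and Lemma~\ref{general_P} (together with Lemma~\ref{when_calom1om}) shows $S(Y)$ has a $P$-ordered base with \emph{finite} levels --- hence $S(Y)$ is both $P$-metrizable and $P$-paracompact --- yet $Y$ is not first countable at the point $\omega_1$. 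So the biconditional as literally stated (``for any directed set $P$'') fails. The lemma should be read with the tacit hypothesis that $P$ has calibre $(\omega_1,\omega)$, which holds in every application in the paper; under that hypothesis your proof is complete.
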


\begin{lemma}
For any space $Y$, the split $X$-space, $S(Y)$, is screenable and therefore $\sigma$-relatively paracompact.
The space $S(Y)$ has a $\sigma$-disjoint base if and only if it has a $\sigma$-relatively locally finite base if and only if $Y$ is first countable. 
\end{lemma}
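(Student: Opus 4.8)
The plan is to prove the chain of implications: (1) $S(Y)$ is always screenable --- which, via Lemma~\ref{sdissrp_srpisspf}, immediately gives that $S(Y)$ is $\sigma$-relatively paracompact; (2) a $\sigma$-disjoint base is automatically $\sigma$-relatively locally finite --- again by Lemma~\ref{sdissrp_srpisspf}, since a base is a family of open sets; (3) if $S(Y)$ has a $\sigma$-relatively locally finite base then $Y$ is first countable; and (4) if $Y$ is first countable then $S(Y)$ has a $\sigma$-disjoint base. Items (2), (3), (4) close the equivalence loop of the second sentence, and (1) gives the first sentence.

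Everything rests on one structural feature of $S(Y)$: every basic neighborhood of $y^+$ sits inside the ``row'' $(Y\times\{y\})\setminus\Delta$ together with the one extra point $y^+$, and every basic neighborhood of $y^-$ sits inside the ``column'' $(\{y\}\times Y)\setminus\Delta$ together with the one extra point $y^-$. For (1): given an open cover $\mathcal{U}$ of $S(Y)$, for each $y\in Y$ pick a basic neighborhood $W_{y^+}=\{y^+\}\cup((V_{y^+}\times\{y\})\setminus\Delta)$ contained in a member of $\mathcal{U}$, likewise a basic $W_{y^-}=\{y^-\}\cup((\{y\}\times V_{y^-})\setminus\Delta)$, and adjoin the singletons of whatever points of $Y^2\setminus\Delta$ remain uncovered. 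By the row/column observation, $\{W_{y^+}:y\in Y\}$, $\{W_{y^-}:y\in Y\}$, and the family of those singletons are each pairwise disjoint (and the last consists of open sets, as such points are isolated), so together they form a $\sigma$-disjoint open refinement of $\mathcal{U}$.

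For (4), suppose $Y$ is first countable and fix a countable local base $\{U_n(y):n\in\N\}$ at each $y\in Y$. Let $\mathcal{B}$ consist of all singletons $\{p\}$ for $p\in Y^2\setminus\Delta$, together with $\{y^+\}\cup((U_n(y)\times\{y\})\setminus\Delta)$ and $\{y^-\}\cup((\{y\}\times U_n(y))\setminus\Delta)$ for all $y\in Y$ and $n\in\N$. Then $\mathcal{B}$ is a base (a local base at every $y^\pm$, and the singletons handle the isolated points), and by the row/column observation it is the union of countably many pairwise disjoint families --- one for the singletons, and for each $n$ one on the $+$ side and one on the $-$ side --- hence $\sigma$-disjoint. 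Item (2) is immediate from Lemma~\ref{sdissrp_srpisspf}.

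It remains to prove (3), which I expect to be the main (though still mild) obstacle. Let $\mathcal{B}=\bigcup_n\mathcal{B}_n$ be a base for $S(Y)$ with each $\mathcal{B}_n$ locally finite in its union, and fix $y\in Y$. For each $n$: if $y^+\notin\bigcup\mathcal{B}_n$ then no member of $\mathcal{B}_n$ contains $y^+$; otherwise $y^+\in\bigcup\mathcal{B}_n$, and local finiteness supplies a neighborhood of $y^+$ meeting only finitely many members of $\mathcal{B}_n$, so only finitely many members of $\mathcal{B}_n$ contain $y^+$. Hence $\{B\in\mathcal{B}:y^+\in B\}$ is a countable subfamily of the base $\mathcal{B}$, so it is a countable local base at $y^+$; shrinking, we may assume its members have the form $W_k=\{y^+\}\cup((U_k\times\{y\})\setminus\Delta)$ with $U_k$ open in $Y$ and $y\in U_k$. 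Then $\{U_k:k\in\N\}$ is a countable local base at $y$ in $Y$: for any open $U\ni y$, the set $\{y^+\}\cup((U\times\{y\})\setminus\Delta)$ is a neighborhood of $y^+$, so it contains some $W_k$, which forces $U_k\setminus\{y\}\subseteq U\setminus\{y\}$ and hence $U_k\subseteq U$. So $Y$ is first countable. The only thing needing care here is to use ``locally finite in its union'' correctly --- remembering that $y^+$ need not lie in $\bigcup\mathcal{B}_n$ for every $n$ --- and then to pass from a countable local base at $y^+$ in $S(Y)$ to one at $y$ in $Y$.
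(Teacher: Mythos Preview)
The paper omits the proof of this lemma (it falls under the ``We omit the proofs of some simple lemmas'' remark at the start of the Useful Constructions subsection), so there is nothing to compare against. Your argument is correct and is exactly the kind of routine verification the authors had in mind: the row/column disjointness observation gives screenability and the $\sigma$-disjoint base when $Y$ is first countable, Lemma~\ref{sdissrp_srpisspf} bridges to $\sigma$-relatively locally finite, and your point-countability argument in (3) --- carefully handling whether $y^+$ lies in $\bigcup\mathcal{B}_n$ --- correctly recovers first countability of $Y$.
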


\begin{lemma}\label{sy_om1om_pcpt}
Let $Y$ be any space. If $Y$ is RCCC, then $S(Y)$ is $(\omega_1,\omega)$-paracompact. Hence, if $Y$ is RCCC and first countable, then $S(Y)$ is $(\omega_1,\omega)$-metrizable.
If $Y$ is metrizable and $S(Y)$ is $(\omega_1,\omega)$-paracompact, then $Y$ is RCCC.
\end{lemma}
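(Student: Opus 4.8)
The plan is to mirror the proof of Lemma~\ref{RCCC}, adjusting the bookkeeping to the asymmetry of the split $X$-machine. For the first claim, start with an arbitrary open cover of $S(Y)$ and pass to an open refinement $\mathcal{U} = \mathcal{U}_1^+ \cup \mathcal{U}_1^- \cup \mathcal{U}_2$, where $\mathcal{U}_1^+ = \{U_y^+ : y \in Y\}$ with each $U_y^+ = \{y^+\} \cup (B_y \times \{y\})$ a basic neighborhood of $y^+$, similarly $\mathcal{U}_1^- = \{U_y^- : y \in Y\}$ with $U_y^- = \{y^-\} \cup (\{y\} \times C_y)$, and $\mathcal{U}_2$ the singletons of the (isolated) points of $Y^2 \setminus \Delta$ not already covered. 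One checks $\mathcal{U}_2$ is locally finite, hence trivially $(\omega_1,\omega)$-locally finite; since an uncountable subfamily of $\mathcal{U}$ has an uncountable intersection with one of $\mathcal{U}_1^+$, $\mathcal{U}_1^-$, $\mathcal{U}_2$, it suffices to show $\mathcal{U}_1^+$ and $\mathcal{U}_1^-$ are $(\omega_1,\omega)$-locally finite. By symmetry, consider $\mathcal{U}_1^+$: given an uncountable $\{U_y^+ : y \in A\}$, since $Y$ is RCCC the set $A$ is not relatively countably compact, so there is an infinite $S \subseteq A$ that is closed and discrete in $Y$. The key point is that $\{U_y^+ : y \in S\}$ is then locally finite in $S(Y)$: at an isolated point, or at a point $z^+$, a suitable basic neighborhood meets at most the single member $U_z^+$ (or none); and at a point $z^-$, a basic neighborhood $\{z^-\} \cup (\{z\} \times V)$ with $V \cap S \subseteq \{z\}$ — available because $S$ is closed discrete — meets no $U_y^+$ with $y \in S$, since $(\{z\} \times V) \cap (B_y \times \{y\}) \neq \emptyset$ forces $y \in V$ and $y \neq z$.

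For the second claim, recall the immediately preceding lemma: $S(Y)$ is a Moore space precisely when $Y$ is first countable, so under the hypotheses $S(Y)$ is a $(\omega_1,\omega)$-paracompact Moore space. Running the argument of Lemma~\ref{ppms_to_pnm} with $(\omega_1,\omega)$-local finiteness in place of $P$-local finiteness — refine each stage $\mathcal{G}_n$ of a development to an $(\omega_1,\omega)$-locally finite open cover $\mathcal{U}_n$ and set $\mathcal{B} = \bigcup_n \mathcal{U}_n$ — yields an $(\omega_1,\omega)$-locally finite base: $\mathcal{B}$ is a base since the $\mathcal{G}_n$ form a development, and a countable union of $(\omega_1,\omega)$-locally finite families is $(\omega_1,\omega)$-locally finite because an uncountable subfamily already meets one stage uncountably. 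Hence $S(Y)$ is $(\omega_1,\omega)$-metrizable.

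For the third claim, fix a metric on $Y$ and write $B_n(y)$ for the ball of radius $1/n$ about $y$. Apply $(\omega_1,\omega)$-paracompactness to the open cover of $S(Y)$ by the sets $\{y^+\} \cup (B_1(y) \times \{y\})$ and $\{y^-\} \cup (\{y\} \times B_1(y))$ for $y \in Y$, together with the singletons of $Y^2 \setminus \Delta$; from an $(\omega_1,\omega)$-locally finite open refinement (which covers $S(Y)$) extract, for each $y$, an open set $W_y^+ \ni y^+$ contained in some member of the cover — necessarily in $\{y^+\} \cup (B_1(y) \times \{y\})$, since $y^+$ is not isolated — and shrink it to $U_y^+ = \{y^+\} \cup (B_{n_y}(y) \times \{y\})$ for a suitable $n_y \in \N$; then $\{U_y^+ : y \in Y\}$ is still $(\omega_1,\omega)$-locally finite. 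Given an uncountable $A \subseteq Y$, a counting argument produces an uncountable $A_1 \subseteq A$ and an $m \in \N$ with $n_y = m$ for all $y \in A_1$, and hence an infinite $S \subseteq A_1$ with $\{U_y^+ : y \in S\}$ locally finite. If $S$ were not closed and discrete in $Y$ it would have an accumulation point $z$, yielding distinct $y_k \in S \setminus \{z\}$ with $d(y_k,z) < 1/k$; then for any $j$ the basic neighborhood $\{z^-\} \cup (\{z\} \times B_j(z))$ of $z^-$ meets $U_{y_k}^+$ for every $k \geq \max\{m,j\}$ (as then $d(z,y_k) < 1/m$ and $d(y_k,z) < 1/j$), contradicting local finiteness. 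So $S$ is an infinite closed discrete subset of $A$, showing $A$ is not relatively countably compact; since $A$ was an arbitrary uncountable set, $Y$ is RCCC.

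The main technical burden — and the only place the split (rather than symmetric) structure matters — is the local-finiteness bookkeeping in the first and third claims: one must keep track that a basic neighborhood of a point $y^-$ can only interact with those members $U_z^+$ whose index $z$ lies in a prescribed open set, so that closed discreteness of the index set $S$ (first claim), or accumulation of $S$ at a point $z$ combined with the single uniform modulus $m$ (third claim), forces the required finiteness or contradiction. Everything else is a routine transcription of Lemma~\ref{RCCC}.
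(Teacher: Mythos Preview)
Your proposal is correct and follows exactly the approach the paper has in mind: the paper's proof simply reads ``The proof for Lemma~\ref{RCCC} can be easily modified to work here,'' and your write-up carries out precisely that modification, tracking the asymmetric $+/-$ neighborhoods and, in the third claim, testing local finiteness at $z^-$ rather than at $z$. Your treatment of the second (``Hence'') claim is in fact more explicit than the paper's, which cites the basic-facts lemma stated only for directed sets $P$; your direct argument that a countable union of $(\omega_1,\omega)$-locally finite families is again $(\omega_1,\omega)$-locally finite fills this in cleanly.
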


\begin{proof}
The proof for Lemma~\ref{RCCC} can be easily modified to work here.
\end{proof}

For $Y$ a subset of $\R$, define $H(Y)=\{ (y,y') : y < y', \ y,y' \in Y\} \cup Y^+ \cup Y^-$ with the subspace topology from  $S(Y)$. For $y$ in $Y$ and $n$ in $\N$, define $V_n(y,+) = \{y^+\} \cup (y-1/n,y) \times \{y\}$ and $V_n(y,-)= \{y^-\} \cup \{y\} \times (y,y+1/n)$. These are basic neighborhoods of $y^+$ and $y^-$, respectively. 
As alluded to above, Heath's split V-space is (homeomorphic to) the subspace $H=H(\R)$. This family of subspaces  has some specific properties we identify.

\begin{lemma}\label{Hom1ommet} For any subspace $Y$ of $\R$, 
the space $H(Y)$ is $(\omega_1,\omega)$-metrizable.
\end{lemma}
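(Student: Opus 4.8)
The plan is to exhibit an explicit $(\omega_1,\omega)$-locally finite base for $H(Y)$, namely the ``natural'' one: $\mathcal{B}=\mathcal{S}\cup\mathcal{V}^+\cup\mathcal{V}^-$, where $\mathcal{S}=\{\{(a,b)\}: a,b\in Y,\ a<b\}$ is the collection of singletons of the isolated points, and $\mathcal{V}^{\pm}=\{V_n(y,\pm): y\in Y,\ n\in\N\}$. One first records the elementary observation that a finite (in fact countable) union of $(\omega_1,\omega)$-locally finite families is again $(\omega_1,\omega)$-locally finite: given an uncountable subfamily of the union, one of its intersections with the pieces is uncountable, and we apply the property there. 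Hence it suffices to prove that each of $\mathcal{S}$, $\mathcal{V}^+$, $\mathcal{V}^-$ is $(\omega_1,\omega)$-locally finite. Throughout we exploit the crucial asymmetry of $H(Y)$: a basic neighborhood $V_m(z,+)$ consists of $z^+$ together with isolated points $(x,z)$ with $x$ just below $z$ (it ``looks left''), while $V_m(z,-)$ consists of $z^-$ together with isolated points $(z,x)$ with $x$ just above $z$ (it ``looks up''); and a basic neighborhood of an isolated point is a singleton. This is exactly what distinguishes $H(Y)$ from $S(Y)$, which (for metrizable $Y$) is $(\omega_1,\omega)$-paracompact only when $Y$ is RCCC.

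For $\mathcal{V}^+$: given an uncountable $\mathcal{W}\subseteq\mathcal{V}^+$, note each set in $\mathcal{V}^+$ determines its parameter $y$ (it contains the unique ``$\pm$''-point $y^+$), and for fixed $y$ there are only countably many such sets; so uncountably many values of $y$ occur in $\mathcal{W}$, and after choosing an $n_y$ for each and pigeonholing we obtain a single $n_0\in\N$ and an uncountable $A\subseteq Y$ with $\{V_{n_0}(y,+):y\in A\}\subseteq\mathcal{W}$. Using the standard fact that every uncountable subset of $\R$ contains a strictly increasing $\omega$-sequence, fix $A'=\{a_0<a_1<\cdots\}\subseteq A$, and set $\mathcal{F}=\{V_{n_0}(y,+):y\in A'\}$. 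I claim $\mathcal{F}$ is an infinite locally finite subfamily of $\mathcal{W}$. Local finiteness is immediate at isolated points (their only neighborhood is a singleton, lying in at most one member of $\mathcal{F}$) and at each $z^+$ (a neighborhood $V_m(z,+)$ meets $V_{n_0}(y,+)$ only when $y=z$, since otherwise both the ``$\pm$''-points and the isolated points differ). The only genuine check is at each $z^-$: one computes that $V_m(z,-)$ meets $V_{n_0}(y,+)$ exactly when $y\in A'\cap(z,z+1/\max(m,n_0))$, so local finiteness at $z^-$ follows as soon as $A'\cap(z,z+\varepsilon)$ is finite for some $\varepsilon>0$ --- and this holds because a strictly increasing sequence has no point approached from strictly above. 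The case of $\mathcal{V}^-$ is the mirror image: the coordinate swap $(a,b)\mapsto(-b,-a)$ is a homeomorphism $H(Y)\to H(-Y)$ carrying $\mathcal{V}^+$ to $\mathcal{V}^-$, so applying the $\mathcal{V}^+$ argument to $-Y$ (with a strictly decreasing sequence) suffices.

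For $\mathcal{S}$: an uncountable subfamily is $\{\{(a,b)\}:(a,b)\in E\}$ for an uncountable set $E$ of distinct pairs with $a<b$. A family of distinct singletons is automatically locally finite at every isolated point, so we need only control the points $z^+$ (where $\{(a,b)\}$ is relevant only if $b=z$ and $a$ is near $z$ from below) and $z^-$ (relevant only if $a=z$ and $b$ near $z$ from above). If both coordinate projections of $E$ are uncountable, pass to an uncountable $E'\subseteq E$ on which both projections are injective; then each $z$ occurs as a first, resp.\ second, coordinate of at most one pair, so any infinite subfamily of $\{\{(a,b)\}:(a,b)\in E'\}$ is locally finite. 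Otherwise some coordinate is constant on an uncountable subfamily; by the $(a,b)\mapsto(-b,-a)$ symmetry we may assume $E$ contains uncountably many pairs $(a^*,b)$ with the $b$'s distinct. Partitioning $(a^*,\infty)$ into countably many pieces each bounded away from $a^*$ and pigeonholing, we extract infinitely many such pairs with the $b$'s bounded away from $a^*$; this subfamily is locally finite, the only possible difficulty (at $(a^*)^-$) being avoided precisely because the relevant $b$'s do not accumulate at $a^*$ from above. Combining the three pieces via the union remark shows $\mathcal{B}$ is an $(\omega_1,\omega)$-locally finite base, so $H(Y)$ is $(\omega_1,\omega)$-metrizable.

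The part requiring the most care is the singleton family $\mathcal{S}$: extracting the correct infinite subfamily needs the projection/fiber dichotomy together with the ``bounded-away'' pigeonhole trick, and one must check that the chosen subfamily is simultaneously locally finite at all ``$\pm$''-points. More conceptually, the heart of the argument in the $\mathcal{V}^{\pm}$ cases is verifying that, because every basic neighborhood of $H(Y)$ ``looks in only one direction'' along the line, any monotone sequence of base points yields a locally finite family --- it is this one-sidedness (absent in $S(Y)$) that makes the lemma true for arbitrary $Y\subseteq\R$.
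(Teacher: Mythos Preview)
Your overall strategy matches the paper's: use the natural base and handle $\mathcal{S}$, $\mathcal{V}^+$, $\mathcal{V}^-$ separately. Your treatment of $\mathcal{V}^\pm$ is correct and in fact a bit cleaner than the paper's --- you only need a strictly increasing sequence, while the paper invokes countable extent of the Sorgenfrey line to extract a \emph{convergent} increasing sequence, though convergence plays no role in the local-finiteness check.

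There is, however, a genuine error in your treatment of $\mathcal{S}$. You assert that if both coordinate projections of $E$ are uncountable then one can pass to an uncountable $E' \subseteq E$ on which both projections are injective. This is false: with $Y = [0,2]$ take $E = (\{0\} \times (0,1]) \cup ((0,1] \times \{2\})$. Both projections have uncountable image, yet any subset on which both projections are injective meets each of the two pieces in at most one point, so has size at most $2$. The dichotomy you actually need is on \emph{fibers}, not images: either some fiber is uncountable (and your case B applies), or all fibers are countable, in which case a routine transfinite recursion of length $\omega_1$ does produce the desired $E'$ (at each stage only countably many points of $E$ conflict with the choices made so far). With this correction your argument goes through.

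That said, the paper handles the singleton family far more simply and avoids any dichotomy. By pigeonhole there is an $n$ such that uncountably many of the pairs $(y,y')$ satisfy $y' - y > 1/n$. For any such pair and any $z$, the neighborhood $V_n(z,+)$ could meet $\{(y,y')\}$ only if $y' = z$ and $y \in (z - 1/n, z)$, contradicting $y < y' - 1/n = z - 1/n$; similarly $V_n(z,-)$ meets none of them. Hence this entire uncountable subfamily is already locally finite, and your projection/fiber analysis and bounded-away pigeonhole are unnecessary.
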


\begin{proof} 
Let  $\mathcal{B} = \{\{(y,y')\} : y < y', \ y,y' \in Y\} \cup \{V_n(y,+) : y \in Y, \ n \in \N\} \cup \{  V_n(y,-) : y \in Y, \ n \in \N\}$. This is a basis for $H(Y)$. We show it is $(\omega_1,\omega)$-locally finite.

Let $\B_1$ be any uncountable subset of $\B$.  There must be an $n\in\N$ and an uncountable subset $\B_2$ of $\B_1$ as in one of the following three cases.

\textbf{Case 1:}  Each element of $\B_2$ is a singleton of the form $\{(y,y')\}$, where $y+\frac{1}{n} < y'$.  Then $\B_2$ is clearly  locally finite in $H(Y)$.

\textbf{Case 2:}  $\B_2 = \{V_n(y,+) : y \in Y'\}$ for some uncountable $Y' \subseteq Y$.  Since $\R$ with the `left' Sorgenfrey topology (in other words, with base $\{(a,b] : a < b\}$) has countable extent, then $Y'$ contains a strictly increasing sequence $(y_k)_k$ that converges in $\R$.  It is then straightforward to check that $\B_3 = \{V_n(y_k,+) : k\in\N\}$ is locally finite in $H(Y)$.

\textbf{Case 3:}  $\B_2 = \{V_n(y,-) : y \in Y'\}$ for some uncountable $Y' \subseteq Y$.  A similar argument (using the `right' Sorgenfrey topology and extracting a strictly decreasing convergent sequence) as for case 2 works here.

In any case, $\B_1$ contains an infinite locally finite subset, so the proof is complete.
\end{proof}

\begin{lemma}\label{HnotPpcpt} For any subspace $Y$ of $\R$ that is not RCCC, the space $H(Y)$ is not $P$-paracompact for any $P$ with calibre $(\omega_1,\omega)$.
\end{lemma}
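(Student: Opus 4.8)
The plan is to argue by contraposition: assuming $H(Y)$ is $P$-paracompact for some $P$ with calibre $(\omega_1,\omega)$, we show $Y$ must be RCCC, i.e., every compact subset of $Y$ is countable (recall that, since $Y$ is metrizable, RCCC is equivalent to total imperfectness). Since $Y$ is separable metrizable, it will be enough to show $Y$ has no uncountable compact subset, or more to the point, that every uncountable $A\subseteq Y$ fails to be relatively countably compact in $Y$. So first I would take $P$ with calibre $(\omega_1,\omega)$ witnessing $P$-paracompactness of $H(Y)$; by Lemma~\ref{Pcalom1om_imp_relom1om}, $H(Y)$ is then $(\omega_1,\omega)$-paracompact.

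Next I would reduce to a concrete open cover. As in the proof of Lemma~\ref{RCCC}/Lemma~\ref{sy_om1om_pcpt}, consider the open cover $\mathcal{U}=\mathcal{U}_1\cup\mathcal{U}_2$ of $H(Y)$ where $\mathcal{U}_1$ picks out one basic neighborhood of each point of $Y^+\cup Y^-$ and $\mathcal{U}_2$ is the singletons of the remaining (isolated) points. An $(\omega_1,\omega)$-locally finite open refinement $\mathcal{V}$ gives, after shrinking, for each $y\in Y$ a choice $n_y\in\N$ with $V_{n_y}(y,+),V_{n_y}(y,-)\in\mathcal{V}$ (up to passing to a refinement that still contains basic neighborhoods of the split points). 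Given an uncountable $A\subseteq Y$, by counting pass to an uncountable $A'\subseteq A$ and fix $m\in\N$ with $n_y=m$ for all $y\in A'$. Then $\{V_m(y,+):y\in A'\}$ is an uncountable subfamily of $\mathcal{V}$, so by $(\omega_1,\omega)$-local finiteness it contains an infinite locally finite subfamily $\{V_m(y,+):y\in S\}$ with $S\subseteq A'$ infinite.

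The key step is then to extract from the local finiteness of $\{V_m(y,+):y\in S\}$ in $H(Y)$ that $S$ is closed and discrete in $Y$ — exactly the argument used in Lemma~\ref{Hom1ommet}'s cases 2 and 3 and in the final claim of Lemma~\ref{RCCC}, read in reverse. If some $z\in Y$ were a limit of $S\setminus\{z\}$ in $\R$, approaching from the appropriate side (below, for the `$+$' neighborhoods) we get infinitely many $y_k\in S$ with $y_k\in(z-1/n,z)$ for every fixed $n$; then the basic neighborhood $V_n(z,+)=\{z^+\}\cup(z-1/n,z)\times\{z\}$ meets $V_m(y_k,+)$ whenever $z-1/n<y_k$ and $1/m$ is large enough — wait, here one must be slightly careful: $V_m(y_k,+)=\{y_k^+\}\cup(y_k-1/m,y_k)\times\{y_k\}$ lives on the line $Y\times\{y_k\}$, so to meet $V_n(z,+)\subseteq(z-1/n,z)\times\{z\}$ we need $y_k=z$, which is excluded. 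The correct incidence to use is with the `$-$' neighborhoods, or better: one should split into the two symmetric cases (an infinite locally finite subfamily of `$+$'-type or of `$-$'-type neighborhoods) and in each case use a neighborhood of the appropriate split point of the limit. Concretely, if $S$ accumulates at $z$ from the left, then for $+$-neighborhoods $V_m(y_k,+)$ with $y_k\uparrow z$, each such set contains points $(t,y_k)$ with $t$ just below $y_k$; a neighborhood of $z^-$, namely $\{z^-\}\cup\{z\}\times(z,z+1/n)$, does not obviously meet these, but a neighborhood of a suitable *non-split* isolated point does not help either. The cleanest route, which I expect to be the main obstacle to get exactly right, is to mimic the final paragraph of the proof of Lemma~\ref{RCCC}: observe that for $y_k<z$ with $y_k\to z$, and $z\in Y$, the point $(y_k,z)\in H(Y)$ (since $y_k<z$) lies in $V_{n}(z,+)$ for $n$ with $z-1/n<y_k$; so it suffices that $(y_k,z)\in V_m(y_k,-)$ — but $V_m(y_k,-)=\{y_k^-\}\cup\{y_k\}\times(y_k,y_k+1/m)$ contains $(y_k,z)$ iff $y_k<z<y_k+1/m$, which holds for large $k$. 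Thus if the infinite locally finite subfamily is of `$-$'-type and $S$ accumulates at $z\in Y$ from the right (the Sorgenfrey-extent argument of Lemma~\ref{Hom1ommet} guarantees a monotone convergent subsequence with limit possibly outside $Y$, so one must also rule out the limit lying outside $Y$ — but if the limit is outside $Y$ then the sequence is already closed discrete in $Y$, which is what we want).

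So the endgame is: in every case, either $S$ is closed discrete in $Y$ outright, or it accumulates at some $z\in Y$ and then one exhibits a neighborhood of a split point $z^\pm$ meeting infinitely many members of the supposedly locally finite family — a contradiction. Either way $S\subseteq A$ is an infinite closed discrete subset of $Y$, so $A$ is not relatively countably compact; as $A$ was an arbitrary uncountable subset, $Y$ is RCCC. The main obstacle, as indicated, is bookkeeping the four sign/side combinations and the possibility of the monotone subsequence converging outside $Y$; none of it is deep, but it must be laid out carefully so that the incidence $(y_k,z)\in V_{n}(z,\pm)\cap V_m(y_k,\mp)$ (or the analogous one) is genuinely witnessed in $H(Y)$, remembering that $H(Y)$ only contains pairs $(y,y')$ with $y<y'$.
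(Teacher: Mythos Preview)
There is a genuine gap, and it is not just bookkeeping. Your very first reduction---``by Lemma~\ref{Pcalom1om_imp_relom1om}, $H(Y)$ is then $(\omega_1,\omega)$-paracompact''---throws away exactly the information needed. By Lemma~\ref{Hom1ommet}, $H(Y)$ is $(\omega_1,\omega)$-metrizable (hence $(\omega_1,\omega)$-paracompact) for \emph{every} $Y\subseteq\R$, including $Y=\R$, which is certainly not RCCC. So no argument starting from $(\omega_1,\omega)$-paracompactness alone can possibly prove the lemma; indeed Example~\ref{om1ompcptnotPpcpt} uses precisely this pair of lemmas to separate the two properties.

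Concretely, your extraction gives an infinite $S$ with $\{V_m(y,+):y\in S\}$ locally finite. This only prevents $S$ from having a \emph{decreasing} sequence converging in $Y$; an \emph{increasing} sequence $y_k\uparrow z\in Y$ causes no problem, because each $V_m(y_k,+)$ lives on the horizontal line $\{(t,y_k):t<y_k\}$, and no basic neighborhood of $z^+$, $z^-$, or any other non-isolated point meets infinitely many of these. You noticed this and reached for $V_m(y_k,-)$, but those sets are not in your locally finite family. The paper's proof avoids this by using Lemma~\ref{PlfviaTukey}(iv): from $P\ge_T([\B]^{<\omega},LF(\B))$ and calibre $(\omega_1,\omega)$ of $P$, the collection $[\B]^{<\omega}$ has relative calibre $(\omega_1,\omega)$ in $LF(\B)$. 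Applying this to the \emph{pairs} $\{W_y^+,W_y^-\}$ yields an infinite $A_2$ with $\{W_y^+,W_y^-:y\in A_2\}$ locally finite \emph{for both signs simultaneously}, and then increasing and decreasing accumulation are both excluded. This passage from singletons to finite subsets uses the directedness of $P$ essentially and is precisely what is lost in your reduction to $(\omega_1,\omega)$-paracompactness.
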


\begin{proof} Fix a subspace $Y$ of $\R$, an uncountable relatively countably compact subset $A$ of $Y$, and a directed set $P$ with calibre $(\omega_1,\omega)$.  To get a contradiction, suppose $\B$ is a $P$-locally finite base for $H(Y)$.  Then according to Lemma~\ref{PlfviaTukey}, we have $P \tr ([\B]^{<\omega}, LF(\B))$, and so by Lemma~\ref{tukey_cal}, $[\B]^{<\omega}$ has relative calibre $(\omega_1,\omega)$ in $LF(\B)$.

For each $y\in A$, there are $W_y^+, W_y^- \in \B$ and an $n_y\in\N$ such that $V_{n_y}(y,+) \subseteq W_y^+ \subseteq V_1(y,+)$ and $V_{n_y}(y,-) \subseteq W_y^- \subseteq V_1(y,-)$.  We can find an uncountable subset $A_1$ of $A$ and an $n\in\N$ such that $n_y = n$ for all $y\in A_1$.  Then the uncountable subset $\{ \{W_y^+,W_y^-\} : y \in A_1\}$ of $[\B]^{<\omega}$ must contain an infinite subset with an upper bound in $LF(\B)$.  Thus, there exists an infinite $A_2 \subseteq A_1$ such that $\mathcal{W} = \bigcup \{ \{W_y^+,W_y^-\} : y \in A_2\}$ is locally finite.

If $A_2$ contains an increasing sequence that converges to some point $a$ in $A$, then $\mathcal{W}$ fails to be locally finite at $a^+$, since $V_n(y,-) \subseteq W_y^+$ for each $y \in A_2$.  Thus, $A_2$ does not contain any increasing sequence that converges in $Y$, and similarly, we can show $A_2$ does not contain any decreasing sequence that converges in $Y$.  But that contradicts the fact that $A$ is relatively countably compact in $Y$.
\end{proof}

\paragraph{Machine 3 --- The Disjoint Sets Split $X$-machine}

This is just a subspace of the previous split $X$-space, but it is so useful we isolate it. Let $Y$ be a space, and $A_1$, $A_2$ be a partition of $Y$.
Let $D(Y;A_1,A_2)$ be the subspace $(Y^2\setminus \diag) \cup A_1^+ \cup A_2^-$ of $S(Y)$.

\begin{lemma} \label{D_basics}
For any Hausdorff space $Y$ and partition $A_1, A_2$, the disjoint sets split-$X$ space, $D(Y;A_1,A_2)$ is zero-dimensional and Hausdorff and hence Tychonoff.

Further, $D(Y;A_1,A_2)$ is a Moore space if and only if $Y$ is first countable.
Hence, $D(Y;A_1,A_2)$ is $P$-metrizable if and only if $Y$ is first countable and $D(Y;A_1,A_2)$ is $P$-paracompact.
\end{lemma}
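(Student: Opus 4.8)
The plan is to exploit that $D(Y;A_1,A_2)$ is, by construction, a subspace of $S(Y)$, so that the topological properties and one direction of the Moore characterization are simply inherited, with a single implication needing a direct argument, and the last sentence following formally as it does for $X(Y)$.

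For the first sentence: zero-dimensionality (possession of a base of clopen sets) and the Hausdorff property are both hereditary, and $S(Y)$ is zero-dimensional and Hausdorff, so $D(Y;A_1,A_2)$ is too, and a zero-dimensional Hausdorff space is Tychonoff. One sees this concretely: the points of $Y^2\setminus\diag$ are clopen singletons, and every basic neighborhood $\{y^+\}\cup((U\setminus\{y\})\times\{y\})$ of a point $y^+$ with $y\in A_1$, and every $\{y^-\}\cup(\{y\}\times(U\setminus\{y\}))$ of a point $y^-$ with $y\in A_2$, is clopen, so these form a clopen base.

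For the Moore characterization: if $Y$ is first countable then $S(Y)$ is a Moore space, and being a Moore space (regular together with developable) is hereditary --- the traces of a development on a subspace form a development --- so $D(Y;A_1,A_2)$ is a Moore space. For the converse one cannot quote the result for $S(Y)$, since $D(Y;A_1,A_2)$ is a \emph{proper} subspace; instead, a Moore space is first countable, and the only non-isolated points of $D(Y;A_1,A_2)$ are the $y^+$ with $y\in A_1$ and the $y^-$ with $y\in A_2$. If $\{\{y^+\}\cup((U_k\setminus\{y\})\times\{y\}):k\in\N\}$ is a countable local base at such a point $y^+$, then $\{U_k:k\in\N\}$ is a local base at $y$ in $Y$: for open $V\ni y$ in $Y$, the set $\{y^+\}\cup((V\setminus\{y\})\times\{y\})$ is a neighborhood of $y^+$, so some $U_k\setminus\{y\}\subseteq V\setminus\{y\}$, whence $U_k\subseteq V$; the symmetric argument at the points $y^-$ handles $y\in A_2$, and since $A_1\cup A_2=Y$ we get that $Y$ is first countable. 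This implication --- $D(Y;A_1,A_2)$ Moore $\Rightarrow$ $Y$ first countable --- is the only part that must be done ``by hand'' rather than by heredity, so it is the natural place to be careful, though it remains routine.

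For the last sentence I would argue exactly as for $X(Y)$ in Lemma~\ref{X_basic_facts}, using the Moore characterization just established. If $Y$ is first countable and $D(Y;A_1,A_2)$ is $P$-paracompact, then $D(Y;A_1,A_2)$ is a $P$-paracompact Moore space, hence $P$-metrizable by Lemma~\ref{ppms_to_pnm}(i). Conversely, if $D(Y;A_1,A_2)$ is $P$-metrizable, fix a $(P\times\N)$-locally finite base $\mathcal{B}$: refining an arbitrary open cover by choosing, at each point, a member of $\mathcal{B}$ inside some element of the cover yields a subfamily of $\mathcal{B}$, which is therefore a $(P\times\N)$-locally finite --- equivalently, after the usual tidying, $P$-locally finite --- open refinement, so $D(Y;A_1,A_2)$ is $P$-paracompact; and first countability of $Y$ follows from $P$-metrizability by the reading-off argument of the previous paragraph.
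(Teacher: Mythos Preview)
The paper omits the proof of this lemma (it falls among the ``simple lemmas'' whose proofs are left to the reader), so there is nothing to compare against directly. Your treatment of the first two assertions --- inheriting zero-dimensionality and the Hausdorff property from $S(Y)$, and establishing the Moore characterization via heredity in one direction and reading off local bases in the other --- is correct and is the natural route.

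For the final sentence, the implication from right to left via Lemma~\ref{ppms_to_pnm}(i) is fine. Your converse, however, has two gaps. First, the assertion that a $(P\times\N)$-locally finite refinement is ``equivalently, after the usual tidying, $P$-locally finite'' is false in general: the paper's tidying remark gives only $(P\times\N) =_T (P\times[\N]^{<\omega})$, not $(P\times\N) =_T P$ (take $P=1$). Second, your reading-off argument for first countability of $Y$ presupposes a \emph{countable} local base at each $y^\pm$, but $P$-metrizability yields first countability only when $P$ has calibre~$(\omega_1,\omega)$ (Lemma~\ref{om1omMetis1o} goes through Lemma~\ref{Pcalom1om_imp_relom1om}). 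Indeed these gaps look unfixable for arbitrary $P$: with $P=[\omega_1]^{<\omega}$ and $Y=A(\omega_1)$, partitioned as $A_1=\{\infty\}$, $A_2=\omega_1$, the space $D(Y;A_1,A_2)$ has weight $\omega_1$ and is therefore $P$-metrizable by Lemma~\ref{general_P}, yet $Y$ is not first countable. So the converse of the last sentence, read literally for all $P$, appears to fail; the direction the paper actually uses in its applications is the one you proved correctly.
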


\begin{lemma}
For any space $Y$ and partition $A_1, A_2$, the space $D(Y;A_1,A_2)$ is screenable and hence $\sigma$-relatively paracompact.

The space $D(Y;A_1,A_2)$ has a $\sigma$-disjoint base if and only if it has a $\sigma$-relatively locally finite base if and only if $Y$ is first countable. 
\end{lemma}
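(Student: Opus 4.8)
The plan is to establish screenability first (from which $\sigma$-relative paracompactness is immediate), and then to close a cycle of implications among the three conditions ``$\sigma$-disjoint base'', ``$\sigma$-relatively locally finite base'', and ``$Y$ first countable''.

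For screenability, given an open cover $\mathcal{O}$ of $D = D(Y;A_1,A_2)$ I would pick, for each split point $p \in A_1^+ \cup A_2^-$, a basic neighborhood $U_p$ of $p$ contained in some member of $\mathcal{O}$, and then the singleton of each point of $Y^2 \setminus \Delta$ not already covered (these singletons are open since such points are isolated). The resulting family refines $\mathcal{O}$ and covers $D$, and the point is that it splits into three pairwise disjoint subfamilies: $\{U_p : p \in A_1^+\}$ (pairwise disjoint because a basic neighborhood $\{y^+\} \cup (W \times \{y\})$ of $y^+$ is pinned to second coordinate $y$), $\{U_p : p \in A_2^-\}$ (pairwise disjoint via first coordinates), and the singletons. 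Hence $D$ is screenable, and $\sigma$-relative paracompactness then follows from Lemma~\ref{sdissrp_srpisspf}.

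For the biconditionals: ``$\sigma$-disjoint base $\Rightarrow$ $\sigma$-relatively locally finite base'' is again immediate from Lemma~\ref{sdissrp_srpisspf}. For ``$\sigma$-relatively locally finite base $\Rightarrow$ $Y$ first countable'', fix $y \in Y$; since $A_1,A_2$ partition $Y$ we may assume $y \in A_1$ (the case $y \in A_2$ is symmetric, using $y^-$ in place of $y^+$), and it suffices to show $y^+$ has countable character in $D$: if $\{B_i : i \in \N\}$ is a local base at $y^+$, then choosing a basic neighborhood $\{y^+\} \cup (U_i \times \{y\}) \subseteq B_i$ for each $i$ yields a countable local base $\{U_i : i \in \N\}$ at $y$ in $Y$. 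Writing the base as $\bigcup_n \mathcal{B}_n$ with each $\mathcal{B}_n$ locally finite in the set $\bigcup \mathcal{B}_n$, which is open in $D$, a level $\mathcal{B}_n$ contains a member through $y^+$ only when $y^+ \in \bigcup \mathcal{B}_n$, and then only finitely many (local finiteness of $\mathcal{B}_n$ at $y^+$ inside the open set $\bigcup \mathcal{B}_n$); so the local base at $y^+$ extracted from $\mathcal{B}$ is a countable union of finite sets. Finally, for ``$Y$ first countable $\Rightarrow$ $\sigma$-disjoint base'', fix for each $y \in Y$ a decreasing countable local base $\{W_n(y) : n \in \N\}$ in $Y$, put $V_n(y^+) = \{y^+\} \cup (W_n(y) \times \{y\})$ for $y \in A_1$ and $V_n(y^-) = \{y^-\} \cup (\{y\} \times W_n(y))$ for $y \in A_2$, and add all singletons of points of $Y^2 \setminus \Delta$; this is a base for $D$, and it is the countable union of the disjoint families $\{V_n(y^+) : y \in A_1\}$ and $\{V_n(y^-) : y \in A_2\}$ over $n \in \N$, together with the family of singletons. (This last direction can alternatively be obtained by intersecting $D$ with a $\sigma$-disjoint base of $S(Y)$, which exists since $Y$ is first countable.)

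The one point that needs care---and the only thing I would call an obstacle---is that in the $\sigma$-disjoint base one must keep the $+$-type and $-$-type neighborhoods on separate levels: a set $V_n(y^+)$ with $y \in A_1$ and a set $V_m(z^-)$ with $z \in A_2$ can share the isolated point $(z,y)$, so the naive single level $\{V_n(y^\pm) : y \in Y\}$ is not disjoint. Everything else is routine bookkeeping; in particular the character-transfer argument uses nothing beyond the fact that each $\bigcup \mathcal{B}_n$ is open in $D$, so that local finiteness there genuinely bounds the number of members of $\mathcal{B}_n$ passing through a split point lying in it.
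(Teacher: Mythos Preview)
Your proof is correct. The paper omits the proof of this lemma entirely (it falls under the authors' blanket remark ``We omit the proofs of some simple lemmas''), so there is nothing to compare against; your argument is the natural one and matches what the authors evidently had in mind, including the observation that the $+$-type and $-$-type basic neighborhoods must be placed on separate levels.
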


\begin{lemma}\label{Dpmet}
Let $Y$ be a space with partition $A_1,A_2$. Let $P$ be a directed set.  If $P \ge_T (A_i, \mathop{CL} (Y) \cap \mathbb{P}(A_i))$ for $i=1,2$, then $D(Y;A_1,A_2)$ is $P$-paracompact.
If $Y$ is metrizable and $D(Y;A_1,A_2)$ is $P$-paracompact, then $P \times \mathbb{N} \ge_T (A_i, \mathop{CL} (Y) \cap \mathbb{P}(A_i))$ for $i=1,2$.
\end{lemma}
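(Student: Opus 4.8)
The plan is to analyze the space $D=D(Y;A_1,A_2)$ point by point. Its points are the isolated points of $Y^2\setminus\diag$ together with the attached points $a^+$ (for $a\in A_1$) and $a^-$ (for $a\in A_2$); a basic neighborhood of $a^+$ has the form $\{a^+\}\cup((W\setminus\{a\})\times\{a\})$ with $W$ open in $Y$ and $a\in W$, and symmetrically for $a^-$. The key observation is: for $B\subseteq A_1$ and any assignment $a\mapsto U_a=\{a^+\}\cup((W_a\setminus\{a\})\times\{a\})$, the family $\{U_a:a\in B\}$ is (trivially) locally finite at every point of $D$ except possibly at the points $b^-$ with $b\in A_2$, and at such a point a basic neighborhood $\{b^-\}\cup(\{b\}\times U)$ (for $U$ open in $Y$ with $b\in U$) meets $U_a$ exactly when $a\in U$ and $b\in W_a$. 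In particular, if $B$ is \emph{closed} in $Y$ then $\{U_a:a\in B\}$ is locally finite in $D$: at each $b^-$ use the neighborhood with $U=Y\setminus B$, which contains $b$ (as $b\in A_2$) and meets no $U_a$ with $a\in B$. The symmetric statement holds for $B\subseteq A_2$ and basic neighborhoods of the points $a^-$.

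For the forward implication, suppose $P\ge_T(A_i,\mathop{CL}(Y)\cap\mathbb{P}(A_i))$ for $i=1,2$. Each $\mathop{CL}(Y)\cap\mathbb{P}(A_i)$ is Dedekind complete (least upper bounds are closures of unions, which remain inside $A_i$), so Lemma~\ref{Ded} provides order-preserving maps $\phi_i:P\to\mathop{CL}(Y)\cap\mathbb{P}(A_i)$ with $\bigcup_{p\in P}\phi_i(p)=A_i$. Given an open cover of $D$, refine it to $\mathcal{U}=\mathcal{U}_1\cup\mathcal{U}_2$ where $\mathcal{U}_1=\{U_a:a\in Y\}$ picks one basic neighborhood of each attached point and $\mathcal{U}_2$ is the collection of singletons of the remaining isolated points (as in the proof of Lemma~\ref{RCCC}, $\mathcal{U}_2$ is locally finite). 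Put $\mathcal{U}^p=\{U_a:a\in\phi_1(p)\cup\phi_2(p)\}\cup\mathcal{U}_2$. By the observation each $\mathcal{U}^p$ is locally finite; they increase with $p$; and $\bigcup_{p\in P}\mathcal{U}^p=\mathcal{U}$. So $\mathcal{U}$ is a $P$-locally finite open refinement, and $D$ is $P$-paracompact.

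For the converse, assume $Y$ is metrizable; fix a metric and let $B_n(a)$ be the open $1/n$-ball about $a$. By symmetry we treat only $A_1$. Let $\mathcal{G}$ be the open cover of $D$ consisting of $\{a^+\}\cup(Y\times\{a\})$ for $a\in A_1$, of $\{a^-\}\cup(\{a\}\times Y)$ for $a\in A_2$, and of all singletons of $Y^2\setminus\diag$; let $\mathcal{V}=\bigcup\{\mathcal{V}_p:p\in P\}$ be a $P$-locally finite open refinement. For $a\in A_1$ choose $V_a\in\mathcal{V}$ with $a^+\in V_a$; since $V_a$ sits inside a member of $\mathcal{G}$ we have $V_a\subseteq\{a^+\}\cup(Y\times\{a\})$, so $a^+$ is the unique attached point of $V_a$ and $a\mapsto V_a$ is injective. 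Fix $p_a\in P$ with $V_a\in\mathcal{V}_{p_a}$ and $n_a\in\N$ with $(B_{n_a}(a)\setminus\{a\})\times\{a\}\subseteq V_a$. Define $\Phi:P\times\N\to\mathbb{P}(A_1)$ by
\[\Phi(p,n)=\overline{\{a\in A_1: p_a\le p\text{ and }n_a\le n\}}\qquad(\text{closure in }Y).\]
The crux is that $\Phi(p,n)\subseteq A_1$: were some $b\in A_2$ in this closure, metrizability would yield distinct $a_k\to b$ with $p_{a_k}\le p$ and $n_{a_k}\le n$, whence $d(a_k,b)<1/n\le 1/n_{a_k}$ for large $k$, so $(b,a_k)\in(B_{n_{a_k}}(a_k)\setminus\{a_k\})\times\{a_k\}\subseteq V_{a_k}$; since $a_k\to b$, every neighborhood of $b^-$ then meets all but finitely many of the distinct sets $V_{a_k}\in\mathcal{V}_p$, contradicting local finiteness of $\mathcal{V}_p$. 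Thus $\Phi$ lands in $\mathop{CL}(Y)\cap\mathbb{P}(A_1)$, and it is clearly order-preserving. Finally, for any cofinal $\mathcal{D}\subseteq P\times\N$ and any $a_0\in A_1$, some $(p,n)\in\mathcal{D}$ has $(p,n)\ge(p_{a_0},n_{a_0})$, so $a_0\in\Phi(p,n)$; hence $\Phi(\mathcal{D})$ is cofinal for $[A_1]^1$. This gives $P\times\N\ge_T(A_1,\mathop{CL}(Y)\cap\mathbb{P}(A_1))$, as needed.

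I expect the forward implication to be routine once the point-by-point observation is set up. The main obstacle is the claim $\Phi(p,n)\subseteq A_1$ in the converse, which is exactly where the hypotheses do their work: metrizability furnishes both the convergent sequence $a_k\to b$ and a gauge ($n_a$) for the ``width'' of $V_a$, and the $\N$-coordinate is genuinely needed because without a uniform lower bound on these widths the sets $V_{a_k}$ need not accumulate at $b^-$; moreover passing to a single level $\mathcal{V}_p$ --- available precisely because $\mathcal{V}$ is $P$-locally finite --- is what supplies the local finiteness used to reach the contradiction.
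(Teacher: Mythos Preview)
Your proof is correct and follows essentially the same approach as the paper's. Both directions are organized identically: the forward implication refines to basic neighborhoods plus leftover singletons and sets $\mathcal{U}^p=\{U_a:a\in\phi_1(p)\cup\phi_2(p)\}\cup\mathcal{U}_2$, while the converse refines the ``full'' basic cover, extracts $n_a$ and a level for each attached point, and defines $\Phi(p,n)$ as the $Y$-closure of the points with both parameters below $(p,n)$. The only cosmetic differences are that the paper tracks membership $U_{n_y}(y)\in\mathcal U_p$ rather than your chosen $p_a\le p$, and the paper verifies $\overline{A_{i,p,n}}\subseteq A_i$ by a direct ball argument rather than your sequence argument; both are valid in the metrizable setting.
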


\begin{proof}
Suppose $P \ge_T (A_i, \mathop{CL} (Y) \cap \mathbb{P}(A_i))$ for $i=1,2$.  Then for each $i=1,2$, there is an order-preserving map $\phi_i: P \to \mathop{CL} (Y) \cap \mathbb{P}(A_i)$ whose image covers $A_i$.  Each open cover of $D(Y;A_1,A_2)$ has an open refinement of the form $\mathcal{U} = \mathcal{U}_0 \cup \mathcal{U}_1 \cup \mathcal{U}_2$, where $\mathcal{U}_1$ contains one basic neighborhood $U_x$ of each $x$ in $A_1^+$, $\mathcal{U}_2$ contains one basic neighborhood $U_x$ of each $x$ in $A_2^-$, and $\mathcal{U}_0$ contains the singletons for each point not covered by $\mathcal{U}_1\cup\mathcal{U}_2$.

For each $p\in P$, let $\mathcal{U}_p = \{U_x : x\in \phi_1(p) \cup \phi_2(p)\} \cup \mathcal{U}_0$.  Note that $\mathcal{U}_1$ is trivially locally finite in $D(Y;A_1,A_2) \setminus A_2^-$, and since $\phi_1(p) \subseteq A_1$ is closed in $Y$, then $\{U_x : x\in\phi_1(p)\}$ is locally finite in all of $D(Y;A_1,A_2)$.  Similarly, the rest of $\mathcal{U}_p$ is locally finite also.  Thus, $\bigcup \{\mathcal{U}_p : p\in P\} = \mathcal{U}$ is $P$-locally finite, which proves the first claim.

For the second claim, fix a metric for $Y$.  Then for any $y\in Y$, let $B_n(y)$ denote the open ball of radius $\frac{1}{n}$ centered at $y$, and define $U_n(y) = \{y^+\} \cup (B_n(y) \times \{y\})$ when $y\in A_1$ and $U_n(y) = \{y^-\} \cup (\{y\} \times B_n(y))$ when $y\in A_2$.  Since $D(Y;A_1,A_2)$ is $P$-paracompact, then for any $y\in Y$, there is an $n_y\in\N$ such that $\mathcal{U} = \{U_{n_y}(y) : y\in Y\}$ is $P$-locally finite.  Write $\mathcal{U} = \bigcup \{\mathcal{U}_p : p\in P\}$ where each $\mathcal{U}_p$ is locally finite and $\mathcal{U}_p \subseteq \mathcal{U}_{p'}$ whenever $p\le p'$ in $P$.

Fix $p\in P$ and $n\in\N$ and define $A_{i,p,n} = \{y\in A_i : U_{n_y}(y) \in \mathcal{U}_p,\ n_y \le n\}$.  We will check that $\cl{A_{i,p,n}}^Y$ is contained in $A_i$.  Let $a$ be in $Y\setminus A_i = A_{3-i}$. Then $a$ has a basic neighborhood $U_k(a)$ with $k\ge n$ that intersects $U_{n_y}(y)$ for only finitely many $y$ in $A_{i,p,n}$, and since those basic neighborhoods intersect in only one point, then we can actually assume $U_k(a)$ does not intersect any $U_{n_y}(y)$ for $y\in A_{i,p,n}$.  We claim that $B_k(a)$ does not intersect $A_{i,p,n}$, which shows that $y$ is not in $\cl{A_{i,p,n}}^Y$.  Indeed, if there were some $y$ in $A_{i,p,n}\cap B_k(a)$, then $a$ would be in $B_k(y) \subseteq B_{n_y}(y)$, so either $(a,y)$ or $(y,a)$ would be in $U_{n_y}(y)\cap U_k(a)$, which is a contradiction.

Then it is straightforward to check that $\phi_i: P\times\N \to CL(Y)\cap\mathbb{P}(A_i)$ defined by $\phi_i(p,n) = \cl{A_{i,p,n}}^Y$ is order-preserving and its image covers $A_i$, so $P \times \mathbb{N} \ge_T (A_i, \mathop{CL} (Y) \cap \mathbb{P}(A_i))$.
\end{proof}

\begin{lemma}\label{Dctblypcpt}
If $Y$ is a $\Delta$-space, then $D(Y;A_1,A_2)$ is countably paracompact.
\end{lemma}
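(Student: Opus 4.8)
The plan is to take a countable open cover $\{U_n : n \in \N\}$ of $D = D(Y;A_1,A_2)$ and produce a locally finite open refinement, using the $\Delta$-space property of $Y$ to control the limit points (the points of $A_1^+$ and $A_2^-$). First I would pass to a standard refinement of the shape $\mathcal{U} = \mathcal{U}_0 \cup \mathcal{U}_1 \cup \mathcal{U}_2$ as in the proof of Lemma~\ref{Dpmet}: $\mathcal{U}_1$ picks one basic neighborhood $U_x = \{x\} \cup (W_x \times \{y\})$ for each $x = y^+ \in A_1^+$ (with $W_x$ open in $Y$ and contained in whichever $U_{n}$ covers $x$), $\mathcal{U}_2$ does the symmetric thing for $A_2^-$, and $\mathcal{U}_0$ consists of singletons of the isolated points of $Y^2 \setminus \diag$ not yet covered. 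Since countable paracompactness only requires refining \emph{countable} covers, and since $\{y^+ : y \in A_1\}$ is indexed by $A_1$, I need to organize this cover using only countably much data; the trick is that each $x = y^+$ lies in some $U_{n(x)}$, so writing $A_1 = \bigcup_n S_n^1$ where $S_n^1 = \{y \in A_1 : y^+ \in U_n\}$ (and similarly $A_1 = \bigcup_n \{y : W_{y^+} \supseteq B_{1/n}^{\text{rel}}\}$-type sets, but more simply just the $S_n^1$) gives a countable increasing decomposition of $A_1$.

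Here is where the $\Delta$-space hypothesis enters. Applying the defining property of a $\Delta$-space to the increasing union $Y = \bigcup_n T_n$, where $T_n$ collects the index sets built above for both $A_1$ and $A_2$ up through stage $n$ (so $T_n$ is increasing and $\bigcup_n T_n = Y$), I get a countable \emph{closed} cover $\{C_n : n \in \omega\}$ of $Y$ with $C_n \subseteq T_n$ for each $n$. The key consequence is that each $C_n \cap A_i$ is a closed-in-$Y$ subset of $A_i$, and the collection of basic neighborhoods $\{U_x : x \in (C_n \cap A_1)^+ \cup (C_n \cap A_2)^-\}$ is locally finite in all of $D$: a point of $A_1^+$ or $A_2^-$ has a neighborhood meeting only finitely many because $C_n \cap A_j$ is closed in $Y$ and disjoint from the relevant $A_i$ (so the argument of Lemma~\ref{Dpmet} applies verbatim), and isolated points are trivially fine. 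Then $\mathcal{V} = \mathcal{U}_0 \cup \bigcup_n \{U_x \cap (\text{complement of the finite bad set}) : x \in (C_n\cap A_1)^+ \cup (C_n \cap A_2)^-\}$, taken as the union over $n$ of these locally finite pieces together with $\mathcal{U}_0$, is a $\sigma$-locally finite open refinement of $\mathcal{U}$, and since $\{C_n\}$ covers $Y$ it covers all of $D$. A $\sigma$-locally finite open refinement of a countable cover can be converted to a locally finite one in the usual way (shrink stage $n$ by removing the parts covered at earlier stages, using normality or just the combinatorics of the countable index), yielding countable paracompactness.

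The main obstacle I expect is bookkeeping: making sure the increasing sequence $T_n$ of subsets of $Y$ fed into the $\Delta$-space property is set up so that membership of $y$ in $T_n$ already guarantees that the chosen basic neighborhood $U_{y^+}$ (resp. $U_{y^-}$) is ``large enough'' — i.e. refines some fixed $U_k$ with $k \le n$ — so that the resulting refinement genuinely refines $\mathcal{U}$ and the local finiteness argument goes through uniformly on each $C_n$. Once the definition of $T_n$ correctly encodes both ``which $U_k$ the point is assigned to'' and ``$C_n\subseteq T_n$ forces closedness of $C_n\cap A_i$ in $Y$,'' the rest is the routine Lemma~\ref{Dpmet}-style verification plus the standard $\sigma$-locally-finite-to-locally-finite step for countable covers. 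A secondary subtlety is handling $\mathcal{U}_0$ (the isolated points), but since isolated points form an open discrete set and each lies in some $U_n$, they cause no trouble.
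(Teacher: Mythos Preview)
There is a genuine gap. Your argument hinges on the claim that ``each $C_n \cap A_i$ is a closed-in-$Y$ subset of $A_i$,'' and that the Lemma~\ref{Dpmet} local-finiteness argument then applies verbatim. But this is false: the $\Delta$-space property only gives you that $C_n$ is closed in $Y$ and $C_n \subseteq T_n$. Intersecting a closed set with $A_i$ does \emph{not} produce something closed in $Y$; for instance, if $C_n = Y$ then $C_n \cap A_1 = A_1$, which is typically not closed. Without closedness of $C_n \cap A_1$ in $Y$, the family $\{U_x : x \in (C_n\cap A_1)^+\}$ need not be locally finite at points of $A_2^-$, so your $\sigma$-locally-finite refinement breaks down. (A secondary issue: the ``standard $\sigma$-locally-finite-to-locally-finite step for countable covers'' is not actually standard --- every countable cover trivially has a $\sigma$-locally-finite open refinement --- so even if the first step worked you would still need more.)

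The paper's proof sidesteps both problems by using the \emph{shrinkable} characterization of countable paracompactness: given an increasing open cover $\{U_n\}$, it suffices to find open $V_n$ with $\cl{V_n} \subseteq U_n$ covering $D$. After obtaining the closed sets $C_n \subseteq S_n$ exactly as you do, the paper sets $W_n^1 = \bigcup\{B_{y,n} : y \in C_n^1\}$ (basic neighborhoods chosen inside $U_n$) and observes that any point of $\cl{W_n^1}\setminus W_n^1$ must be some $a^-$ with $a \in A_2 \cap \cl{C_n^1}^Y \subseteq A_2 \cap C_n = C_n^2 \subseteq S_n^2$, hence $a^- \in U_n$. So the failure of $C_n^1$ to be closed in $Y$ is harmless: its $Y$-closure only leaks into $C_n^2$, and those points are already in $U_n$. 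This is the key idea your proposal is missing.
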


\begin{proof}
Let $Y$ be a $\Delta$-space.  Recall that a space $X$ is countably paracompact if and only if for every countable increasing open cover $\{U_n : n\in\N\}$ of $X$, there is an open cover $\{V_n : n\in\N\}$ of $X$ such that $\cl{V_n}\subseteq U_n$ for each $n\in\N$.  So, let $\{U_n : n\in\N\}$ be an increasing open cover of $D(Y;A_1,A_2)$.

For each $n\in\N$ and $i=1,2$, let $S_n^i \subseteq A_i$  such that $(S_n^1)^+ = U_n\cap A_1^+$ and $(S_n^2)^- = U_n\cap A_2^-$, and then define $S_n = S_n^1\cup S_n^2$.  Then $\{S_n : n\in\N\}$ forms an increasing cover of $Y$, so there is a closed cover $\{C_n : n\in\N\}$ of $Y$ such that $C_n\subseteq S_n$ for each $n\in\N$.  Let $C_n^i = C_n\cap A_i$ for each $n\in\N$ and $i=1,2$, so $C_n^i \subseteq S_n^i$ and  $\bigcup_n C_n^i = A_i$.

For each $y$ in $C_n^1$, pick a basic open set $B_{y,n}$ such that $y^+ \in B_{y,n} \subseteq U_n$, and for each $y$ in $C_n^2$, pick a basic open $B_{y,n}$ such that $y^- \in B_{y,n} \subseteq U_n$.  Let $W_n^i = \bigcup \{ B_{y,n} : y \in C_n^i\}$ for $i=1,2$. Then $W_n^i$ is open and contained in $U_n$.

Notice also that if $z\in\cl{W_n^1}\setminus W_n^1$, then $z = a^-$ for some $a$ in $A_2 \cap \cl{C_n^1} \subseteq A_2 \cap C_n = C_n^2$.  Thus, $\cl{W_n^1}$ is contained in $W_n^1 \cup (C_n^2)^- \subseteq U_n$.  Similarly, we also have $\cl{W_n^2} \subseteq U_n$.  Let $W_n = W_n^1 \cup W_n^2$. Then $W_n$ is open,  its closure is contained in $U_n$, and $\bigcup_n W_n$ contains $A_1^+ \cup A_2^-$.

Let $T = D(Y,A_1,A_2) \setminus \bigcup_n W_n$, which is a clopen set of isolated points.  Then $T_n = T \cap U_n$ is also clopen for each $n$.  Let $V_n = W_n \cup T_n$.  Then $\{V_n : n\in\N\}$ is an open cover of $D(Y;A_1,A_2)$ such that $\cl{V_n} \subseteq U_n$.  Thus, $D(Y;A_1,A_2)$ is countably paracompact.
\end{proof}

\paragraph{Machine 4 --- Generalized Michael Line}

Let $Y$ be a metrizable space and $A$ any subset. Then $M(Y,A)$ has  $Y$ as its underlying set and is topologized by adding all singletons, $\{a\}$, for $a$ in $A$, to the original topology on $Y$.

\begin{lemma}\label{Mpmet}
Fix a metrizable space $Y$ and a subspace $A$. Let $P$ be a directed set. Then $M(Y,A)$ is $P$-metrizable if and only if $(P\times \N) \ge_T (A, \mathop{CL}(Y) \cap \mathbb{P}(A))$.
\end{lemma}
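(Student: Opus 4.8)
The strategy is to analyze the topology of $M = M(Y,A)$ carefully and reduce $P$-metrizability to the Tukey-quotient condition via the machinery already assembled (especially Lemmas~\ref{PlfviaTukey}, \ref{srlf_ppn_to_pm}, and the observation that $M$ has a natural $\sigma$-relatively locally finite base coming from $Y$). First I would fix a $\sigma$-locally finite base $\mathcal B_Y = \bigcup_n \mathcal B_n$ for the metrizable space $Y$, and note that a base for $M(Y,A)$ is $\mathcal B_Y \cup \{\{a\} : a \in A\}$. The isolated-point part $\{\{a\} : a \in A\}$ is exactly a copy of $A$, and a subfamily $\{\{a\} : a \in A'\}$ is locally finite in $M$ precisely when $A'$ is closed in $M$; but since every subset of $A$ is closed in $M$, it is locally finite in $M$ iff $A'$ is closed \emph{in $Y$} together with the points of $A$ it accumulates at being... — more precisely, $\{\{a\}:a\in A'\}$ is locally finite in $M(Y,A)$ iff $\cl{A'}^Y \cap A \subseteq A'$ and $\cl{A'}^Y\setminus A$ is handled by $Y$-local-finiteness; the clean statement I would prove is that $\{\{a\} : a\in A'\}$ is locally finite in $M$ iff $A'\in \mathop{CL}(Y)\cap\mathbb{P}(A)$, i.e. iff $A'$ is a subset of $A$ closed in $Y$. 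This identifies the relevant directed set of locally finite subfamilies of the isolated part with (a cofinal copy of) $(A, \mathop{CL}(Y)\cap\mathbb{P}(A))$.

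For the forward direction, suppose $M(Y,A)$ is $P$-metrizable, so it has a $(P\times\N)$-locally finite base $\mathcal B$. For each $a\in A$ pick $B_a\in\mathcal B$ with $B_a\subseteq\{a\}$ (possible since $\{a\}$ is open), so $B_a=\{a\}$. By Lemma~\ref{PlfviaTukey}, $P\times\N \ge_T (\mathcal B, LF(\mathcal B))$, and restricting the witnessing order-preserving map and composing with $A'\mapsto A'$ (using the identification above), I get $P\times\N \ge_T (A, \mathop{CL}(Y)\cap\mathbb{P}(A))$: given any subset of $A$ cofinal for... — concretely, if $\phi:P\times\N\to LF(\mathcal B)$ is order-preserving with $\phi(P\times\N)$ cofinal for $\mathcal B$, define $\psi(p,n) = \bigcup\{a : \{a\}\in\phi(p,n)\}$; this is a subset of $A$, it is closed in $Y$ because $\phi(p,n)$ is locally finite in $M$, the map is order-preserving, and its image covers $A$ because every $B_a=\{a\}$ appears at some level. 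Hence $\psi$ witnesses $(P\times\N)\ge_T (A,\mathop{CL}(Y)\cap\mathbb{P}(A))$.

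For the converse, suppose $(P\times\N)\ge_T (A,\mathop{CL}(Y)\cap\mathbb{P}(A))$, witnessed (using Dedekind completeness and Lemma~\ref{Ded}) by an order-preserving $\phi:P\times\N\to \mathop{CL}(Y)\cap\mathbb{P}(A)$ with image covering $A$. I will build a $(P\times\N)$-locally finite base directly. The ``metrizable part'': $\mathcal B_Y = \bigcup_n \mathcal B_n$ is $\sigma$-locally finite in $Y$, hence in $M$ (adding isolated points only shrinks neighborhoods, preserving local finiteness), so it is $(\{1\}\times\N)$-locally finite and a fortiori $(P\times\N)$-locally finite. The ``isolated part'': set $\mathcal I_{(p,n)} = \{\{a\} : a\in\phi(p,n)\}$. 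This is $(P\times\N)$-ordered since $\phi$ is order-preserving, each level is locally finite in $M$ by the identification above (as $\phi(p,n)$ is a subset of $A$ closed in $Y$), and $\bigcup_{(p,n)}\mathcal I_{(p,n)} = \{\{a\}:a\in A\}$ since $\phi$'s image covers $A$. Taking the common refinement/union structure — reindex both families by $P\times\N\times\{0,1\}\cong P\times\N$ and use that a finite union of locally finite families is locally finite at each level — yields a $(P\times\N)$-locally finite collection $\mathcal B$. Finally $\mathcal B$ is a base: at a point of $Y\setminus A$ the $\mathcal B_Y$ part suffices, and at a point $a\in A$ the singleton $\{a\}$ lies in $\mathcal B$.

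The main obstacle I anticipate is the bookkeeping in the forward direction: I must ensure that passing from a $(P\times\N)$-locally finite \emph{base} to the Tukey quotient $(P\times\N)\ge_T (A,\mathop{CL}(Y)\cap\mathbb{P}(A))$ genuinely uses only that the base contains each singleton $\{a\}$, and that ``$\phi(p,n)$ locally finite in $M$'' really does force $\bigcup\{a:\{a\}\in\phi(p,n)\}$ to be closed in $Y$ — this is where I would be most careful, checking that a $Y$-limit point $b$ of such a set that lies outside the set cannot exist, since in $M$ every neighborhood of $b$ (whether $b\in A$ or not) would meet infinitely many of the $\{a\}$'s. The rest is routine verification that order-preservation and cofinality transfer across the identification, plus the standard fact (implicitly used throughout the paper, cf.\ Lemmas~\ref{srp_ppn_to_pp} and~\ref{srlf_ppn_to_pm}) that $(P\times[\N]^{<\omega})$-indexing lets one amalgamate finitely many locally finite levels.
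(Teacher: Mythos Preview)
Your overall strategy matches the paper's, but there is a real error in the forward direction. Your ``clean statement'' --- that $\{\{a\}:a\in A'\}$ is locally finite in $M(Y,A)$ if and only if $A'$ is closed in $Y$ --- is false, and your justification breaks exactly where you anticipated trouble. You argue that a $Y$-limit point $b\notin A'$ cannot exist because ``in $M$ every neighborhood of $b$ (whether $b\in A$ or not) would meet infinitely many of the $\{a\}$'s.'' But if $b\in A$, then $\{b\}$ is an $M$-neighborhood of $b$ meeting \emph{none} of the $\{a\}$'s. Concretely: take $Y=\R$, $A=\R$, $A'=\{1/n:n\in\N\}$. Then $M(Y,A)$ is discrete, so $\{\{1/n\}:n\}$ is certainly locally finite in $M$, yet $A'$ is not closed in $\R$. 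The correct characterization is: $\{\{a\}:a\in A'\}$ is locally finite in $M(Y,A)$ if and only if $\cl{A'}^Y\subseteq A$ (local finiteness is automatic at points of $A$, and at $y\in Y\setminus A$ the $M$-neighborhoods coincide with $Y$-neighborhoods).

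The fix is exactly what the paper does: instead of setting $\psi(p,n)=\bigcup\{a:\{a\}\in\phi(p,n)\}$, take its $Y$-closure. Local finiteness in $M$ guarantees this closure is contained in $A$, so it lands in $\mathop{CL}(Y)\cap\mathbb{P}(A)$; closure is monotone, so order-preservation survives; and the image still covers $A$. Your converse direction is fine as written (there the hypothesis gives you sets already closed in $Y$, which \emph{is} sufficient for local finiteness of the corresponding singleton family). So the proposal is salvageable with this one-line correction, and once corrected it is essentially the paper's argument.
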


\begin{proof}
Let $Q=P \times \N$. Suppose, first, that $M(Y,A)$ is $P$-metrizable.  Then $M(Y,A)$ has a $Q$-ordered base $\mathcal{B}=\bigcup \{\mathcal{B}_q : q \in Q\}$ where every $\mathcal{B}_q$ is locally finite. For each $q$, let $\mathcal{B}_q^A = \mathcal{B}_q \cap \{ \{a\} : a \in A\}$, and $B_q = \bigcup \mathcal{B}_q^A$. Since all points of $A$ are isolated, the $\mathcal{B}_q^A$ form a $Q$-ordered clopen cover of $A$ by families locally finite in $M(Y,A)$. Hence the $B_q$ form a $Q$-ordered cover of $A$ by sets closed in $M(Y,A)$. By definition of the topology on $M(Y,A)$, the closure in $Y$ (with its original topology) of a $B_q$, call it $C_q$, is contained in $A$. Hence, the family $\{C_q : q \in Q\}$ witness that $Q \ge_T (A, \mathop{CL}(Y) \cap \mathbb{P}(A))$.

Now suppose $\{ C_q : q \in Q\}$ is a $(P\times \N)$-ordered cover of $A$ by subsets of $A$ which are closed in $Y$. Let $\mathcal{B}'=\bigcup_n \mathcal{B}_n'$ be a base for $Y$ (with its original, metrizable topology) such that $\mathcal{B}_n' \subseteq \mathcal{B}_m'$ when $n \le m$ and each $\mathcal{B}_n'$ is locally finite. Define $\mathcal{B}=\bigcup \{ \mathcal{B}_{q,n} : q \in Q, n \in \N\}$ by $\mathcal{B}_{q,n} = \mathcal{B}_n' \cup \{ \{a\} : a \in C_q\}$. Since $Q\times \N =_T P \times \N =Q$, it is easy to see that $\mathcal{B}$  shows  $M(Y,A)$ is $P$-metrizable.
\end{proof}

Suppose $Y$ is \emph{compact} metrizable and $A$ is any subspace of $Y$. Then $\mathop{CL}(Y)=\K(Y)$, and $\mathop{CL}(Y) \cap \mathbb{P}(A) = \K(A)$. We deduce:

\begin{lemma}
Let $Y$ be compact metrizable, and let $A$ be a subset of $Y$. Then $M(Y,A)$ is $\K(A)$-metrizable. If $M(Y,A)$ is $P$-metrizable for some directed set $P$, then $P\times\N \ge_T (A,\K(A))$.
\end{lemma}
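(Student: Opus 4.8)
The plan is to derive this as an immediate specialization of Lemma~\ref{Mpmet}. The key observation, already recorded in the sentence preceding the statement, is that when $Y$ is compact metrizable, every closed subset of $Y$ is compact, so $\mathop{CL}(Y) = \K(Y)$ and hence $\mathop{CL}(Y) \cap \mathbb{P}(A) = \K(Y) \cap \mathbb{P}(A) = \K(A)$ (a compact subset of $Y$ contained in $A$ is exactly a compact subset of $A$, since compactness is absolute). Substituting this identification into Lemma~\ref{Mpmet} converts the criterion `$(P \times \N) \ge_T (A, \mathop{CL}(Y) \cap \mathbb{P}(A))$' into `$(P \times \N) \ge_T (A, \K(A))$' verbatim.

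For the first assertion, I would take $P = \K(A)$ in Lemma~\ref{Mpmet} and check that $\K(A) \times \N \ge_T (A, \K(A))$. This holds because $\K(A)$ is already directed above $(A, \K(A))$ in the strong sense: the identity-type map $\phi : \K(A) \to \K(A)$, $\phi(K) = K$, is order-preserving and its image is cofinal for $A$ in $\K(A)$ (every singleton $\{a\}$ with $a \in A$ lies below $\{a\} \in \K(A)$, indeed below any compact set containing $a$). Then, since $\K(A) \times \N \ge_T \K(A)$ trivially (project onto the first coordinate), transitivity of relative Tukey quotients gives $\K(A) \times \N \ge_T (A, \K(A))$, and Lemma~\ref{Mpmet} yields that $M(Y,A)$ is $\K(A)$-metrizable.

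For the second assertion, suppose $M(Y,A)$ is $P$-metrizable for some directed set $P$. Lemma~\ref{Mpmet} immediately gives $(P \times \N) \ge_T (A, \mathop{CL}(Y) \cap \mathbb{P}(A))$, and using the identification $\mathop{CL}(Y) \cap \mathbb{P}(A) = \K(A)$ above, this reads $P \times \N \ge_T (A, \K(A))$, as claimed.

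There is essentially no obstacle here: the only point requiring a moment's care is the set-theoretic identity $\mathop{CL}(Y) \cap \mathbb{P}(A) = \K(A)$, which rests on the compactness of $Y$ (so that closed subsets are compact) together with the fact that a subset of $A$ is compact in $A$ if and only if it is compact in $Y$. Once that is in hand, the lemma is a pure corollary of Lemma~\ref{Mpmet}, and the proof is a one-line invocation plus the verification of the trivial Tukey quotient $\K(A) \times \N \ge_T (A, \K(A))$.
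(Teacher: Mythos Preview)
Your proposal is correct and matches the paper's approach exactly: the paper states the identification $\mathop{CL}(Y)\cap\mathbb{P}(A)=\K(A)$ in the sentence preceding the lemma and then records the result as an immediate deduction from Lemma~\ref{Mpmet}, without a separate written proof. Your write-up simply makes explicit the trivial Tukey relation $\K(A)\times\N \ge_T (A,\K(A))$ needed for the forward direction, which the paper leaves implicit.
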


\section{Main Results}

\subsection{(Pseudo)Compact $X$ and $X^2\setminus\diag$}

We recall that a countably compact space is compact if it is metaLindel\"{o}f and metrizable if it has a point-countable base \cite{Mis}. Further, a pseudocompact space is compact if $\sigma$-metacompact and metrizable if it has a $\sigma$-point finite base \cite{Usp}. On the other hand there are pseudocompact spaces with a point-countable base (hence metaLindel\"{o}f) which are not compact (and so not metrizable) \cite{Shak}.
\begin{lemma} \label{pseudo}
Let $X$ be  a pseudocompact space. If $X$ is  $(\omega_1,\omega)$-paracompact, then $X$ is compact. If $X$ is $(\omega_1,\omega)$-metrizable then $X$ is metrizable.
\end{lemma}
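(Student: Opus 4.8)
The plan is to treat the $(\omega_1,\omega)$-paracompact case first and then bootstrap the $(\omega_1,\omega)$-metrizable case off it. For the first part, recall that a (Tychonoff) space is pseudocompact exactly when it is feebly compact, i.e.\ every locally finite family of nonempty open sets is finite. Let $\mathcal{U}$ be an arbitrary open cover of $X$ and let $\mathcal{V}$ be an $(\omega_1,\omega)$-locally finite open refinement, with any empty members deleted. If $\mathcal{V}$ were uncountable, then by the definition of $(\omega_1,\omega)$-locally finite it would contain an infinite locally finite subfamily, which is an infinite locally finite family of nonempty open sets --- impossible in a pseudocompact space. So $\mathcal{V}$ is countable, and choosing for each $V\in\mathcal{V}$ a member of $\mathcal{U}$ containing it yields a countable subcover of $\mathcal{U}$; hence $X$ is Lindel\"of. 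Being regular and Lindel\"of, $X$ is normal, and a normal pseudocompact space is countably compact (an infinite closed discrete subset would, via Tietze's theorem, support an unbounded continuous real-valued function, contradicting pseudocompactness); a countably compact Lindel\"of space is compact. This proves the first assertion.

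For the second, I would first note that a space with an $(\omega_1,\omega)$-locally finite base $\mathcal{B}$ is automatically $(\omega_1,\omega)$-paracompact: given an open cover $\mathcal{U}$, the family of those $B\in\mathcal{B}$ with $B\subseteq U$ for some $U\in\mathcal{U}$ is an open refinement of $\mathcal{U}$ (since $\mathcal{B}$ is a base), and being a subfamily of $\mathcal{B}$ it remains $(\omega_1,\omega)$-locally finite. Hence the first part applies and $X$ is compact. Moreover $\mathcal{B}$, being $(\omega_1,\omega)$-locally finite, is $(\omega_1,\omega)$-point finite, hence point countable by Lemma~\ref{com1om_family}. Thus $X$ is a compact space with a point-countable base, and so is metrizable by \cite{Mis}.

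I expect the only genuine subtlety to be the passage from ``pseudocompact and Lindel\"of'' to ``compact'': one must route through normality of regular Lindel\"of spaces and use the feeble-compactness characterization of pseudocompactness a second time. Everything else is unwinding the definitions together with the already-proved Lemma~\ref{com1om_family} and the classical fact (cited just above the lemma) that a compact space with a point-countable base is metrizable.
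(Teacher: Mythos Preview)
Your argument is correct and shares the paper's key observation: in a pseudocompact (equivalently, feebly compact) Tychonoff space every $(\omega_1,\omega)$-locally finite open family must be countable. The paper is much terser, simply citing the recalled results of Uspenskii (pseudocompact $+$ $\sigma$-metacompact $\Rightarrow$ compact, and pseudocompact $+$ $\sigma$-point finite base $\Rightarrow$ metrizable) once countability of the refinement/base is in hand; in particular, for the metrizable claim the paper's route is shorter than yours, since the $(\omega_1,\omega)$-locally finite base is itself countable, giving second countability directly without detouring through compactness and point-countable bases.
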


\begin{proof}
Recall that $X$ is pseudocompact if and only if each locally finite family of open subsets of $X$ is finite.  Thus, any open cover $\mathcal{V}$ of $X$ with relative calibre $(\omega_1,\omega)$ in $LF(\mathcal{V})$ must be countable. Both claims are now immediate.
\end{proof}

The following  natural questions are open:
\begin{ques}
Let $X$ be a pseudocompact space.

Is $X$ compact if (i) $(\omega_1,\omega)$-metacompact, or (ii) $P$-metacompact for some $P$ with calibre $(\omega_1,\omega)$, or (iii) $\K(M)$-metacompact?

Is $X$ metrizable if it has (i) an $(\omega_1,\omega)$-point finite base, or (ii) a $P$-point finite base for some $P$ with calibre $(\omega_1,\omega)$, or (iii) a $\K(M)$-point finite base?
\end{ques}

Recall that Gruenhage showed that a compact space $X$ is metrizable if and only if $X^2\setminus\diag$ is paracompact. We prove an optimal $P$-paracompact variant. The following lemma is extracted from Gruenhage's proof.

\begin{lemma}[Gruenhage, \cite{Gru84}] \label{GrLemma}
Let $X$ be compact and not metrizable.  If $X^2\setminus\diag$ has a partition $\{S_\alpha : \alpha < \kappa\}$ such that each $S_\alpha$ is open in $X^2\setminus\diag$ and Lindel\"{o}f, then $X$ contains a subspace homeomorphic to $A(\kappa)$ for some uncountable $\kappa$.
\end{lemma}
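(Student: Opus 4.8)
The plan is to dichotomise on whether some point of $X$ meets uncountably many pieces of the partition. Note first that, since $X^2\setminus\diag$ is open in the compact space $X^2$, each $S_\alpha$ is open in $X^2$ as well. For $x\in X$ put
\[B_x=\{\alpha<\kappa : (x,y)\in S_\alpha \text{ or } (y,x)\in S_\alpha \text{ for some } y\in X\}.\]
I will also use the following elementary fact: \emph{if $E$ is an infinite subset of a compact Hausdorff space whose only accumulation point $p$ does not lie in $E$, then $E\cup\{p\}$ is homeomorphic to $A(|E|)$.} Indeed, each point of $E$, not being an accumulation point of $E$, has a neighbourhood meeting $E$ only in itself, so $E$ is relatively discrete and $\cl{E}=E\cup\{p\}$ is compact; moreover any neighbourhood $N$ of $p$ contains all but finitely many points of $E$, since otherwise $E\setminus N$ would be an infinite set whose (necessarily unique) accumulation point $p$ would contradict $(E\setminus N)\cap N=\emptyset$. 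Hence the trace on $E\cup\{p\}$ of the neighbourhood filter of $p$ is precisely the cofinite filter on $E$, which is the defining property of $A(|E|)$.

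\textbf{Case 1: $B_{x_0}$ is uncountable for some $x_0\in X$.} Then, after possibly switching the roles of the two coordinates, there is an uncountable $B\subseteq\kappa$ with $\{y : (x_0,y)\in S_\alpha\}\neq\emptyset$ for each $\alpha\in B$. These slices are relatively open in $X\setminus\{x_0\}$, and pairwise disjoint because the $S_\alpha$ partition $X^2\setminus\diag$, so choosing $y_\alpha$ in the $\alpha$-th slice yields an uncountable set $E=\{y_\alpha : \alpha\in B\}$ that is relatively discrete in $X\setminus\{x_0\}$ and has no accumulation point there. Consequently $x_0$ is the unique accumulation point of $E$ in $X$, and the fact above gives $A(|E|)\cong E\cup\{x_0\}\subseteq X$ --- the required copy of $A(\kappa')$ with $\kappa'=|E|$ uncountable.

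\textbf{Case 2: $B_x$ is countable for every $x\in X$.} I claim this forces $X$ to be metrizable, which is impossible; hence Case 1 must occur and we are done. Each $S_\alpha$ is Lindel\"of and, being open in $X^2$, locally compact, so it is $\sigma$-compact; each of its compact pieces is disjoint from the closed set $\diag$, so by normality of $X$ it is covered by finitely many rectangles $U\times V$ with $U,V$ nonempty open in $X$, $\cl{U}\cap\cl{V}=\emptyset$, and $U\times V\subseteq S_\alpha$. Collecting these over all pieces gives, for each $\alpha$, a \emph{countable} family $\mathcal{R}_\alpha$ of such rectangles with $\bigcup\mathcal{R}_\alpha=S_\alpha$; let $\mathcal{B}$ consist of all the sides of all rectangles in all the $\mathcal{R}_\alpha$. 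A routine compactness argument shows that the finite intersections of members of $\mathcal{B}$ form a base for $X$: given $x\notin F$ with $F$ closed, for each $y\in F$ pick a rectangle $U\times V$ (in some $\mathcal{R}_\alpha$) containing $(x,y)$, observe $y\notin\cl{U}\ni x$, cover $F$ by finitely many of the open sets $X\setminus\cl{U}$, and intersect the corresponding $U$'s. Under the Case 2 hypothesis this base is point-countable: if $x$ lies in a side of a rectangle of $\mathcal{R}_\alpha$ then $\alpha\in B_x$, and as $B_x$ and each $\mathcal{R}_\alpha$ are countable, $x$ belongs to only countably many members of $\mathcal{B}$, hence to only countably many of their finite intersections. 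But a compact (so countably compact) space with a point-countable base is metrizable \cite{Mis}, contradicting the hypothesis on $X$.

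I expect Case 2 to be the crux: the real work is distilling from the mere $\sigma$-compactness of the pieces $S_\alpha$ an honest base of $X$, and then reading its point-countability directly off the countability of the sets $B_x$. Case 1 --- together with the little fact about recognising $A(|E|)$ --- is comparatively routine.
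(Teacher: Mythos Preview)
Your proof is correct and follows essentially the same strategy as the paper's: both hinge on Mi\v{s}\v{c}enko's theorem applied to the sides of open rectangles covering the Lindel\"of pieces $S_\alpha$. The paper organises the argument linearly --- it builds the family $\mathcal{W}$ of sides, observes it is $T_2$-separating, invokes non-metrizability to find a point $x$ in uncountably many members, then pigeonholes on the second index $n$ to obtain a discrete family $\{V_{\alpha,m}:\alpha\in A\}$ in $X\setminus\{x\}$ --- whereas you set it up as an explicit dichotomy, with your Case~2 being precisely the contrapositive of the paper's opening move. Your Case~1 is in fact a little cleaner than the paper's extraction: by working directly with the horizontal slices $\{y:(x_0,y)\in S_\alpha\}$, which (since the $S_\alpha$ are open and partition $X^2\setminus\diag$) form a clopen partition of $X\setminus\{x_0\}$, you avoid the rectangles and the pigeonhole on $n$ altogether. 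The price you pay is having to verify that finite intersections of a point-countable family remain point-countable, a detail the paper sidesteps by using the ``point-countable $T_1$-separating cover'' version of Mi\v{s}\v{c}enko rather than the ``point-countable base'' version.
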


\begin{proof}
Since each $S_\alpha$ is Lindel\"{o}f and open in $X^2\setminus\diag$, we can write $S_\alpha = \bigcup_{n\in\N} U_{\alpha,n} \times V_{\alpha,n}$, where $U_{\alpha,n}$ and $V_{\alpha,n}$ are disjoint open sets in $X$ for each $n\in\N$.  Define $\mathcal{W} = \{U_{\alpha,n} : \alpha<\kappa,\ n\in\N\} \cup \{V_{\alpha,n} : \alpha<\kappa,\ n\in\N\}$.  
Then $\mathcal{W}$ is a $T_2$-separating open cover of $X$.
Since any compact space with a point-countable $T_1$-separating open cover is metrizable \cite{Mis}, and by hypothesis $X$ is not metrizable, $\mathcal{W}$ cannot be point-countable.  Hence, there is a point $x \in X$ contained in uncountably many members of $\mathcal{W}$.  Without loss of generality, there is an uncountable subset $A \subseteq \kappa$ and an $m\in\N$ such that $x \in \bigcap_{\alpha \in A} U_{\alpha,m}$.
Because $U_{\alpha,m} \times V_{\alpha,m} \subseteq S_\alpha$, then $\{U_{\alpha,m}\times V_{\alpha,m} : \alpha \in A\}$ is a discrete collection in $X^2\setminus\diag$.  It follows that $\{V_{\alpha,m} : \alpha\in A\}$ is a discrete collection in $X\setminus\{x\}$.
Thus, if we choose a point $y_\alpha \in V_{\alpha,m}$ for each $\alpha\in A$, then $Y = \{y_\alpha : \alpha\in A\}$ is an uncountable closed discrete subspace of $X\setminus\{x\}$.  As $X$ is compact, $\cl{Y}^X = \{x\} \cup Y$ is the one-point compactification of $Y$.
\end{proof}

\begin{theorem} \label{main_result}
The following are equivalent for a pseudocompact space $X$:

(i) $X$ is metrizable;

(ii) $X^2\setminus\diag$ is $(\omega_1,\omega)$-paracompact; and

(iii) $X^2\setminus\diag$ is $P$-paracompact for some directed set $P$ with calibre $(\omega_1,\omega)$.
\end{theorem}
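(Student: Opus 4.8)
The plan is to establish the cycle (i)$\Rightarrow$(iii)$\Rightarrow$(ii)$\Rightarrow$(i). Here (i)$\Rightarrow$(iii) is immediate: if $X$ is metrizable then $X^2\setminus\diag$ is metrizable, hence paracompact, i.e.\ $1$-paracompact, and the one-point directed set $1$ vacuously has calibre $(\omega_1,\omega)$. And (iii)$\Rightarrow$(ii) is exactly Lemma~\ref{Pcalom1om_imp_relom1om}. So all the content is in (ii)$\Rightarrow$(i), which I would prove by contradiction in two steps.

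\emph{Step 1 (reduce to $X$ compact).} We may assume $|X|\ge2$; fix $x_0\in X$. Then $(X\setminus\{x_0\})\times\{x_0\}=(X\times\{x_0\})\cap(X^2\setminus\diag)$ is a closed subspace of $X^2\setminus\diag$ homeomorphic to $X\setminus\{x_0\}$. Since $(\omega_1,\omega)$-paracompactness is closed-hereditary --- given an open cover of a closed set $F$, extend its members to open sets of the ambient space, add the complement of $F$, take an $(\omega_1,\omega)$-locally finite open refinement of this cover, and intersect with $F$, picking distinct preimages to see the resulting family is still $(\omega_1,\omega)$-locally finite --- the space $X\setminus\{x_0\}$ is $(\omega_1,\omega)$-paracompact. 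By Lemma~\ref{card} so is $X$, and then Lemma~\ref{pseudo} gives that $X$ is compact.

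\emph{Step 2 (the heart of the matter).} Assume $X$ is compact but not metrizable. By regularity of $X$, for any $x\ne y$ in $X$ we can separate $x$ and $y$ by open sets with disjoint closures, so the ``rectangles'' $U\times W$ with $\overline{U}\cap\overline{W}=\emptyset$ form an open cover $\mathcal{R}$ of $X^2\setminus\diag$; as $X$ is compact, every subset of a member of $\mathcal{R}$ has compact closure contained in $X^2\setminus\diag$. Applying (ii) to $\mathcal{R}$, fix an $(\omega_1,\omega)$-locally finite open refinement $\mathcal{V}$ of $\mathcal{R}$; so each $V\in\mathcal{V}$ has compact closure in $X^2\setminus\diag$. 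Two facts: (a) every compact $K\subseteq X^2\setminus\diag$ meets only countably many members of $\mathcal{V}$, since an uncountable subfamily met by $K$ would contain an infinite locally finite subfamily whose members all meet $K$ --- impossible for a locally finite family; (b) declaring $p\sim q$ when there is a finite chain $V_1,\dots,V_n$ in $\mathcal{V}$ with $p\in V_1$, $q\in V_n$, and $V_i\cap V_{i+1}\ne\emptyset$ gives an equivalence relation whose classes $\{S_\alpha\}$ are open, partition $X^2\setminus\diag$, and satisfy $\overline{V}\subseteq S_\alpha$ whenever $V\in\mathcal{V}$ and $V\subseteq S_\alpha$. I claim each $S_\alpha$ is covered by only countably many members of $\mathcal{V}$: fix $V_0\subseteq S_\alpha$, put $\mathcal{V}^0=\{V_0\}$ and $\mathcal{V}^{k+1}=\mathcal{V}^k\cup\{V\in\mathcal{V}:V\subseteq S_\alpha\text{ and }V\cap W\ne\emptyset\text{ for some }W\in\mathcal{V}^k\}$; then $\bigcup_k\mathcal{V}^k=\{V\in\mathcal{V}:V\subseteq S_\alpha\}$, and were some $\mathcal{V}^{k+1}$ uncountable with $\mathcal{V}^k$ countable, pigeonhole would produce a single $W\in\mathcal{V}^k$ meeting uncountably many members of $\mathcal{V}$, so that the compact set $\overline{W}$ contradicts (a). Hence $S_\alpha=\bigcup\{V\in\mathcal{V}:V\subseteq S_\alpha\}$ is a countable union of sets with compact closure contained in $S_\alpha$, so $S_\alpha$ is $\sigma$-compact and therefore Lindel\"of. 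Now $\{S_\alpha\}$ is precisely the partition required by Lemma~\ref{GrLemma}, so $X$ contains a copy of $A(\kappa)$ for some uncountable $\kappa$. Then $A(\kappa)^2\setminus\diag=(A(\kappa)^2)\cap(X^2\setminus\diag)$ is a closed subspace of $X^2\setminus\diag$, hence $(\omega_1,\omega)$-paracompact, contradicting Lemma~\ref{supersequence}. So $X$ is metrizable, completing (ii)$\Rightarrow$(i).

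\emph{Main obstacle.} Everything apart from Step 2 is either a direct citation or routine bookkeeping; the real work is manufacturing, out of bare $(\omega_1,\omega)$-paracompactness of $X^2\setminus\diag$ together with the local compactness that comes with compactness of $X$, a genuine partition of $X^2\setminus\diag$ into open Lindel\"of pieces suitable for Gruenhage's Lemma~\ref{GrLemma}. The one idea that makes this work is that an $(\omega_1,\omega)$-locally finite open cover by relatively compact sets is forced to have $\sigma$-compact ``chain components'', after which the supersequence lemma closes the loop.
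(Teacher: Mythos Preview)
Your proof is correct and follows essentially the same approach as the paper's: reduce to compact $X$ via the closed subspace $X\setminus\{x_0\}$ and Lemmas~\ref{card} and~\ref{pseudo}, then take an $(\omega_1,\omega)$-locally finite open refinement by relatively compact sets, use the chain-component equivalence to partition $X^2\setminus\diag$ into open $\sigma$-compact (hence Lindel\"of) pieces, invoke Gruenhage's Lemma~\ref{GrLemma}, and finish with Lemma~\ref{supersequence}. The only cosmetic difference is that the paper defines the equivalence relation on members of the cover rather than on points, but the resulting partition and the countability argument are the same.
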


\begin{proof}
We have (i) $\Rightarrow$ (iii) since `paracompact' is equivalent to `$1$-paracompact', and  (iii) $\Rightarrow$ (ii) by Lemma~\ref{Pcalom1om_imp_relom1om}, so we just need to show (ii) $\Rightarrow$ (i).

First, pick any point $x\in X$ and note that, since $X\setminus\{x\}$ is homeomorphic to a closed subspace of $X^2\setminus\diag$, then $X\setminus\{x\}$ is $P$-paracompact, and so $X$ is compact by Lemmas~\ref{card} and~\ref{pseudo}.

Now, $X^2\setminus\diag$ is locally compact and $(\omega_1,\omega)$-paracompact, so we can find an open cover $\mathcal{U}$ of $X^2\setminus\diag$ which is  $(\omega_1,\omega)$-locally finite and such that $\cl{U}^{X^2\setminus\diag}$ is compact for each $U \in \mathcal{U}$.  For each $n\in\N$, define a relation $\sim_n$ on $\mathcal{U}$ by $U \sim_n V$ if and only if there are $U_0,U_1,\ldots,U_n \in \mathcal{U}$ such that $U_0 = U$, $U_n = V$, and $U_i \cap U_{i-1} \neq \emptyset$ for each $i=1,\ldots,n$.  Then define an equivalence relation $\sim$ on $\mathcal{U}$ by $U\sim V$ if and only if $U \sim_n V$ for some $n\in\N$.

Now let $\{[U_\alpha] : \alpha < \kappa\}$ be a one-to-one enumeration of the $\sim$-equivalence classes, and let $S_\alpha = \bigcup [U_\alpha]$.  Then $\{S_\alpha : \alpha < \kappa\}$ is a partition of $X^2\setminus\diag$ consisting of open sets.  Each $S_\alpha$ is thus also closed in $X^2\setminus\diag$, and it follows that $S_\alpha = \bigcup \{\cl{U}^{X^2\setminus\diag} : U \in [U_\alpha]\}$.  Hence, we can show that each $S_\alpha$ is $\sigma$-compact by verifying that $[U_\alpha]$ is countable.

For each $U \in \mathcal{U}$ and $n\in\N$, let $[U]_n = \{V \in \mathcal{U} : U \sim_n V\}$.  We first prove that $[U]_1$ is countable for each $U \in\mathcal{U}$.  Suppose, instead, that $[U]_1$ is uncountable.  Since $\mathcal{U}$ has relative calibre $(\omega_1,\omega)$ in $LF(\mathcal{U})$, then there is an infinite subset $\mathcal{V} \subseteq [U]_1$ which is locally finite.  But since $\cl{U}^{X^2\setminus\diag}$ is compact and $\mathcal{V}$ is locally finite, then there should be only finitely many members of $\mathcal{V}$ intersecting $U$, which is a contradiction since every member of $[U]_1$ intersects $U$.  Now since $[U]_{n+1} = \bigcup \{[V]_1 : V \in [U]_n\}$, then by induction, we see that each $[U]_n$ is countable, so $[U] = \bigcup \{[U]_n : n\in\N\}$ is also countable.

Suppose $X$ is not metrizable.  Since each $S_\alpha$ is $\sigma$-compact, then by Lemma~\ref{GrLemma}, we can find a subspace $Y$ of $X$ and an uncountable cardinal $\lambda$ such that $Y$ is homeomorphic to $A(\lambda)$.  Since $Y$ is compact, then $Y^2\setminus\diag$ is a closed subspace of $X^2\setminus\diag$, so $Y^2\setminus\diag$ is also $(\omega_1,\omega)$-paracompact.  But this is a contradiction, according to Lemma~\ref{supersequence}.
\end{proof}

The next result shows that in statement (iii) of Theorem~\ref{main_result} we cannot replace the class of all directed sets with calibre $(\omega_1,\omega)$ with a larger class of directed sets.

\begin{theorem}
Suppose $P$ is a directed set satisfying: every compact space $X$ such that $X^2 \setminus \diag$ is $P$-paracompact must be metrizable. Then $P$ must be calibre $(\omega_1,\omega)$.
\end{theorem}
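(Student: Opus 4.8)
The plan is to prove the contrapositive: assuming $P$ does \emph{not} have calibre $(\omega_1,\omega)$, I would produce a compact non-metrizable space $X$ for which $X^2\setminus\diag$ is $P$-paracompact. The natural witness is $X=A(\omega_1)$. This space is compact, is not metrizable (it is not second countable, or equivalently not first countable at $\infty$, since the neighbourhood filter at $\infty$ is generated by the complements of the finite subsets of $D(\omega_1)$), and has weight $\omega_1$ (it has $\omega_1$ isolated points, and the base just described has size $\omega_1$). Hence $X^2\setminus\diag$, being a subspace of $X^2$, has weight at most $\omega_1$.

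Next I would apply Lemma~\ref{general_P} to $X^2\setminus\diag$: since $P$ lacks calibre $(\omega_1,\omega)$ and $w(X^2\setminus\diag)\le\omega_1$, the space $X^2\setminus\diag$ has a base $\mathcal{B}=\bigcup\{\mathcal{B}_p:p\in P\}$ with $\mathcal{B}_p\subseteq\mathcal{B}_{p'}$ whenever $p\le p'$ and each $\mathcal{B}_p$ finite. The point is that any space carrying such a base is automatically $P$-paracompact: given an open cover $\mathcal{U}$ of $X^2\setminus\diag$, set $\mathcal{B}'=\{B\in\mathcal{B}:B\subseteq U\text{ for some }U\in\mathcal{U}\}$. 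Because $\mathcal{B}$ is a base, $\mathcal{B}'$ is still a cover, and it refines $\mathcal{U}$ by construction; moreover $\mathcal{B}'=\bigcup\{\mathcal{B}_p\cap\mathcal{B}':p\in P\}$ exhibits $\mathcal{B}'$ as $P$-ordered with each level finite, hence locally finite. Thus $\mathcal{B}'$ is a $P$-locally finite open refinement of $\mathcal{U}$, so $X^2\setminus\diag$ is $P$-paracompact. Since $A(\omega_1)$ is compact and non-metrizable, this contradicts the hypothesis on $P$, and therefore $P$ must have calibre $(\omega_1,\omega)$.

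There is essentially no obstacle in this argument; the two things worth isolating are, first, the observation that a $P$-ordered base with finite (indeed, merely locally finite) levels yields $P$-paracompactness by passing to the subfamily of basic sets that refine a given cover, and second, the fact that the only real constraint on the choice of witness is that it be a compact non-metrizable space whose $X^2\setminus\diag$ has weight $\le\omega_1$, which forces weight exactly $\omega_1$ rather than an arbitrary compact non-metrizable space --- $A(\omega_1)$ being the simplest such, and one already in play via Lemma~\ref{supersequence}.
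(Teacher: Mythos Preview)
Your proof is correct and follows essentially the same approach as the paper: both argue the contrapositive by taking $X=A(\omega_1)$, invoking Lemma~\ref{general_P} to get a $P$-ordered base of $X^2\setminus\diag$ with finite levels, and deducing $P$-paracompactness. The paper is simply more terse, asserting in one line that a $P$-(locally) finite base gives $P$-paracompactness, whereas you spell out the refinement argument explicitly.
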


\begin{proof}
We show the contrapositive. Suppose $P$ is a directed set which is not calibre $(\omega_1,\omega)$. Take any compact space $X$ which has weight precisely $\omega_1$ (for example, $A(\omega_1)$). By Lemma~\ref{general_P}, $X^2 \setminus \diag$ has a $P$-(locally) finite base, and so is $P$-paracompact, but $X$ is not metrizable.
\end{proof}

\subsection{Diversity of $\K(M)$-metrizable Spaces}

The next theorem says that there exist $\K(M)$-metrizable spaces for every separable metrizable $M$. Further, there is a maximal `antichain' of separable metrizable spaces with corresponding topological spaces which are $\K(M)$-metrizable for one, and only one, member $M$ of the antichain.

\begin{theorem}\label{main_conv}
For each separable metrizable space $A$, there is a hereditarily paracompact $\K(A)$-metrizable space $M_A$ such that:
 if $A'$ is any non-compact separable metrizable space and $M_A$ is $\K(A')$-metrizable, then $\K(A') \ge_T (A,\K(A))$.

Hence there is a $2^\ctm$-sized family $\mathcal{A}$ of separable metrizable spaces such that:

(i) if $A$ is in $\mathcal{A}$ then $M_A$ is $\K(A)$-metrizable, but

(ii) if $A'$ is another member of $\mathcal{A}$, then $M_A$ is not $\K(A')$-metrizable.
\end{theorem}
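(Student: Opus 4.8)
The plan is to build $M_A$ as a generalized Michael line over a suitable compactification of $A$. Concretely, embed $A$ as a subspace of $Y$, a copy of the Hilbert cube $I^\N$, and set $M_A = M(Y, A)$ using Machine~4. Since $Y$ is compact metrizable, $\mathop{CL}(Y) \cap \mathbb{P}(A) = \K(A)$, so by the last lemma before Section~3, $M(Y,A)$ is $\K(A)$-metrizable, and moreover $M(Y,A)$ is $P$-metrizable for a directed set $P$ exactly when $P \times \N \ge_T (A, \K(A))$. Hereditary paracompactness of $M(Y,A)$ is standard for Michael-line-type constructions: $M(Y,A)$ is the union of the metrizable (hence hereditarily paracompact) open subspace obtained by isolating the points of $A$ together with the original metrizable topology on the complement, and one checks directly that any open cover has a $\sigma$-locally finite open refinement; this works hereditarily because every subspace of $M(Y,A)$ is again of the form $M(Y', A')$ for appropriate $Y' \subseteq Y$, $A' = A \cap Y'$.

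Next I would prove the dichotomy clause. Suppose $A'$ is a non-compact separable metrizable space and $M_A$ is $\K(A')$-metrizable. By the cited lemma, $\K(A') \times \N \ge_T (A, \K(A))$. The point is to absorb the extra $\N$ factor: since $A'$ is non-compact separable metrizable, $\K(A')$ contains a countable strictly increasing cofinal-below-some-limit chain, so $\N$ is a Tukey quotient of $\K(A')$ and hence $\K(A') \times \N =_T \K(A')$ (more carefully: $\K(A') \ge_T \N$ because a non-compact $A'$ has a countable discrete-in-itself closed subset or an escaping sequence of compacta, giving $\K(A') \ge_T \N$, and for any $Q$ with $Q \ge_T \N$ one has $Q \times \N =_T Q$). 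Therefore $\K(A') \ge_T (A, \K(A))$, as required.

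For the family $\mathcal{A}$, I would invoke Theorem~\ref{bigantichain}: there is a $2^\ctm$-sized family $\mathcal{A}$ of separable metrizable spaces such that for distinct $A, A' \in \mathcal{A}$ we have $\K(A') \not\ge_T (A, \K(A))$ and $\K(A) \not\ge_T (A', \K(A'))$. Without loss of generality every member of such an antichain is non-compact (a compact member would be Tukey-trivial and could only appear once, so we may discard it; alternatively replace each $A$ by $A \oplus \N$, which preserves the antichain property and forces non-compactness). For $A \in \mathcal{A}$, part~(i) is the first clause of the theorem. For part~(ii), if $A' \in \mathcal{A}$ is distinct from $A$ and $M_A$ were $\K(A')$-metrizable, then by the dichotomy clause (using that $A'$ is non-compact) we would get $\K(A') \ge_T (A, \K(A))$, contradicting the antichain property of $\mathcal{A}$. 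Hence $M_A$ is not $\K(A')$-metrizable.

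The main obstacle I anticipate is the bookkeeping around the $\N$ factor: one must be sure that "non-compact separable metrizable" really does force $\K(A') \ge_T \N$ (equivalently, that $\K(A')$ has uncountable cofinality or at least a countable unbounded subset), and that this suffices to collapse $\K(A') \times \N$ to $\K(A')$ in the Tukey order — this is exactly where the non-compactness hypothesis on $A'$ is used and cannot be dropped. A secondary point requiring a little care is verifying hereditary paracompactness of $M(Y,A)$ in full generality rather than just paracompactness; this is routine but should be spelled out, using that subspaces of generalized Michael lines are again generalized Michael lines.
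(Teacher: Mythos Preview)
Your proposal is correct and follows essentially the same route as the paper's proof: embed $A$ in the Hilbert cube, set $M_A=M(I^\N,A)$, apply Lemma~\ref{Mpmet} (equivalently the lemma you cite) together with $\K(A')\times\N=_T\K(A')$ for non-compact $A'$, and then invoke the antichain of Theorem~\ref{bigantichain}. You add some justifications the paper leaves implicit (hereditary paracompactness of the Michael-line construction, absorption of the $\N$ factor, and why no antichain member can be compact), but the architecture is identical.
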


\begin{proof}
Fix a separable metrizable space $A$. Without loss of generality, we suppose $A$ is a subspace of the Hilbert cube, $I^\N$, and set $M_A = M(I^\N,A)$. 

By Lemma~\ref{Mpmet}, $M_A$ is $\K(A)$-metrizable. Let $A'$ be any non-compact separable metrizable space and suppose $M_A$ is $\K(A')$-metrizable. By Lemma~\ref{Mpmet} $(\K(A') \times \N) \ge_T (A, CL(I^\N) \cap \mathbb{P}(A))$.  Since $A'$ is not compact,  $\K(A')\times \N =_T \K(A')$ and $CL(I^\N) \cap \mathbb{P}(A) = \K(A)$, so we have $\K(A') \ge_T (A,\K(A))$.

Take $\mathcal{A}$ to be the $2^\ctm$-sized `antichain' of Theorem~\ref{bigantichain}. No member of $\mathcal{A}$ is compact. The first part of this theorem and the properties of the antichain then immediately yield (i) and (ii).
\end{proof}

\subsection{Characterizing $\K(M)$-paracompact and $\K(M)$-metrizable}

Here we aim to give characterizations of when a space is $\K(M)$-paracompact or $\K(M)$-metrizable, for some separable metrizable $M$, in terms of properties not referring to a separable metrizable space. This is completely successful for $\K(M)$-metrizability but only partially so for $\K(M)$-paracompactness. The characterizations provide insight into the structure of $\K(M)$-paracompact and $\K(M)$-metrizable spaces that are key to all subsequent results. We give examples showing that the results do not hold if the given additional hypotheses are dropped, nor if  $\K(M)$-paracompact/$\K(M)$-metrizable is weakened to $P$-paracompact/$P$-metrizable where $P$ has calibre $(\omega_1,\omega)$. We also give examples distinguishing all the relevant properties ($\K(M)$-metrizable, $P$-metrizable for $P$ with calibre $(\omega_1,\omega)$, $(\omega_1,\omega)$-metrizable, etc).

\begin{theorem} \label{P_metric}
Let $X$ be a space.

(i) $X$ is $\K(M)$-metrizable for some separable metrizable $M$ if and only if $X$ has a weakly $\sigma$-locally finite base.

(ii) If $X$ is $\K(M)$-metrizable for some separable metrizable $M$, then $X$ has a $\sigma$-relatively locally finite base.

(iii) If $X$ is $\K(M)$-perfectly normal for some separable metrizable $M$ and has a $\sigma$-relatively locally finite base, then it is $\K(M)$-metrizable.

(iv) If $X$ has a $\sigma$-disjoint base, then it has a $\sigma$-relatively locally finite base. If $X$ has a $\sigma$-relatively locally finite base, then $X$ has a $\sigma$-point finite base.

(v) If $X$ is $\K(M)$-metrizable for some separable metrizable $M$, then it is $P$-metrizable where $P$ has calibre $(\omega_1,\omega)$.

(vi) If $X$ is $P$-metrizable where $P$ has calibre $(\omega_1,\omega)$, then $X$ is $(\omega_1,\omega)$-metrizable.
\end{theorem}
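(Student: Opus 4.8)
The plan is to assemble the six parts from results already proved in the Preliminaries, arranging them so that later parts may cite earlier ones. Parts (iv), (v) and (vi) are essentially immediate. For (iv), apply Lemma~\ref{sdissrp_srpisspf} to the base: a $\sigma$-disjoint base is in particular a $\sigma$-disjoint family of open sets, hence $\sigma$-relatively locally finite, and a $\sigma$-relatively locally finite base is a $\sigma$-relatively locally finite family of open sets, hence $\sigma$-point finite. For (v), observe that $\K(M)\times\N$ is Tukey equivalent to $\K(M\oplus D)$, where $D$ is a countable discrete space (since $\K(M\oplus D)\cong\K(M)\times\K(D)$ and $\K(D)=[D]^{<\omega}=_T\N$), and $M\oplus D$ is separable metrizable, so $\K(M)\times\N$ has calibre $(\omega_1,\omega)$ by Lemma~\ref{when_calom1om}; the $(\K(M)\times\N)$-locally finite base witnessing $\K(M)$-metrizability thus exhibits $X$ as $P$-metrizable for a $P$ with calibre $(\omega_1,\omega)$. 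Part (vi) is exactly the "$P$-metrizable $\Rightarrow$ $(\omega_1,\omega)$-metrizable" clause of Lemma~\ref{Pcalom1om_imp_relom1om}.

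The heart of the theorem is part (i), which I would route entirely through Proposition~\ref{wslf_kmlf}. For the forward direction, a weakly $\sigma$-locally finite base $\mathcal{B}$ is $\K(M)$-locally finite for some separable metrizable $M$ by Proposition~\ref{wslf_kmlf}; since $\K(M)\times\N\ge_T\K(M)$, the final ("hence") clause of Lemma~\ref{PlfviaTukey} shows $\mathcal{B}$ is $(\K(M)\times\N)$-locally finite, i.e. $X$ is $\K(M)$-metrizable. For the converse, suppose $X$ has a $(\K(M)\times\N)$-locally finite base. The one piece of bookkeeping required is the identification $\K(M)\times\N=_T\K(M')$ for the separable metrizable $M'=M\oplus D$ noted above, so that the base is $\K(M')$-locally finite. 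Before invoking the converse half of Proposition~\ref{wslf_kmlf} one needs $X$ first countable: since $\K(M')$ has calibre $(\omega_1,\omega)$, Lemma~\ref{Pcalom1om_imp_relom1om} makes $X$ $(\omega_1,\omega)$-metrizable, and then Lemma~\ref{om1omMetis1o} gives first countability. Proposition~\ref{wslf_kmlf} now yields that the $\K(M')$-locally finite base is weakly $\sigma$-locally finite.

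The remaining two parts then follow quickly. For (ii), part (i) supplies a weakly $\sigma$-locally finite base, and feeding it into Lemma~\ref{wslf_to_srlf} produces a $\sigma$-relatively locally finite base. For (iii), apply Lemma~\ref{srlf_ppn_to_pm} with $P=\K(M)$: a space with a $\sigma$-relatively locally finite base that is $\K(M)$-perfectly normal is $\K(M)$-metrizable. I do not expect a genuine obstacle here; the only mildly delicate point is the Tukey juggling in part (i) — the equivalence $\K(M)\times\N=_T\K(M')$ and the need to establish first countability before applying the converse direction of Proposition~\ref{wslf_kmlf} — while everything else is a direct citation of a Preliminaries result.
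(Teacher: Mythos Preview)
Your proof is correct and follows essentially the same route as the paper: parts (i) and (ii) via Proposition~\ref{wslf_kmlf} and Lemma~\ref{wslf_to_srlf}, part (iii) via Lemma~\ref{srlf_ppn_to_pm}, part (iv) via Lemma~\ref{sdissrp_srpisspf}, and parts (v)--(vi) via Lemmas~\ref{when_calom1om} and~\ref{Pcalom1om_imp_relom1om}. You are in fact more careful than the paper about the $\K(M)$ versus $\K(M)\times\N$ bookkeeping in (i), which the paper leaves implicit; one small simplification in (v) is that you may take $P=\K(M)$ directly (since $P$-metrizable already means a $(P\times\N)$-locally finite base), avoiding the detour through $\K(M\oplus D)$.
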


\begin{proof}
Proposition~\ref{wslf_kmlf} shows that any weakly $\sigma$-locally finite base is $\K(M)$-locally finite for some separable metrizable space $M$, which gives one direction of (i). Conversely, if $X$ is $\K(M)$-metrizable for some separable metrizable space $M$, then $X$ is first countable by Lemma~\ref{om1omMetis1o}. So by Proposition~\ref{wslf_kmlf}, we can see that $X$ has a weakly $\sigma$-locally finite base, which completes the proof of (i). 

Statement (ii) follows from (i) and Lemma~\ref{wslf_to_srlf}, and (iii) is just a special case of Lemma~\ref{srlf_ppn_to_pm}.

Since any $\sigma$-disjoint open family is $\sigma$-relatively locally finite, and any $\sigma$-relatively locally finite family is $\sigma$-point finite, then we can see that (iv) is true.

Claims (v) and (vi) follow from Lemmas~\ref{when_calom1om} and~\ref{Pcalom1om_imp_relom1om}, respectively.
\end{proof}

We can summarize these results as follows. Examples on their own but next to an arrow representing an implication demonstrate that a converse fails. A `$+$ property' indicates an additional hypothesis, and the adjacent example shows that the additional hypothesis is necessary. `Perfectly normal' has been abbreviated `PN'. 
\begin{center}
\begin{tikzpicture}[auto,prop/.style={rectangle}]

\node[prop] (wsp) at (5,4) {weakly $\sigma$-locally finite base};

\node[prop] (kMp) at (0,4) {$\exists M : \K(M)$-metrizable};

\node[prop] (srp) at (6,2) {$\sigma$-relatively locally finite base};

\node[prop] (scr) at (7.5,3.25) {$\sigma$-disjoint base};

\node[prop] (sm) at (7,0.75) {$\sigma$-point finite base};

\node[prop] (pcal) at (0,2) {$\exists P$ cal. $(\omega_1,\omega) : P$-metrizable};

\node[prop] (om1omp) at (0,0) {$(\omega_1,\omega)$-metrizable};

\draw[<-] ($(wsp.west)+(0,-0.1)$) to ($(kMp.east)+(0,-0.1)$);
\draw[<-] ($(kMp.east)+(0,0.1)$) to  ($(wsp.west)+(0,0.1)$);

\draw[->] ($(kMp.south)+(0.2,0)$) to node [swap]  {\tiny{Ex.~\ref{PpctnotkM}, \ref{bigM}}} ($(pcal.north)+(0.2,0)$);

\draw[->] ($(pcal.south)+(0.2,0)$) to node [swap] {\tiny{Ex.~\ref{om1ompcptnotPpcpt}}} ($(om1omp.north)+(0.2,0)$);

\draw[->] ($(srp.north west)+(-0.1,-0.0)$) to node [swap] {\tiny{$+\K(M)$-PN \ Ex.~\ref{srpnotkmp}}} ($(kMp.south east)+(-0.6,0)$);

\draw[->] ($(scr.south)+(-0.2,0)$) to node  {\tiny{Q.~\ref{srpnotscr}}} ($(srp.north)+(-0.2,0)$);

\draw[->] ($(srp.south)+(-0.2,0)$) to node  {\tiny{Ex.~\ref{spfnotsrp}, \ref{spfnotkmp}}} ($(sm.north)+(-0.2,0)$);

\draw[->] ($(wsp.south)+(0.6,0)$) to node  {\tiny{Ex.~\ref{srpnotkmp}}} ($(srp.north)+(-1,0)$); 
\end{tikzpicture}

\end{center}

There is a clear logical difference between saying that a space is `metrizable' or is `paracompact'. Metrizability asserts the existence of a certain object ($\sigma$-locally finite base), while paracompactness says that for every object of one type (open cover) there is a certain object of another type (locally finite open refinement). This logical difference means that there is a unique `$\K(M)$-variant' of metrizability ($\K(M)$-metrizable, for some $M$) but two `$\K(M)$-variants' of paracompactness, depending on whether the $M$ used to organize an open refinement is chosen in advance ($\K(M)$-paracompact) or with the refinement ($\K(\M)$-paracompact). This results in a more complex range of implications and examples.

\begin{theorem}\label{P_para}
Let $X$ be a space.

(i) If $X$ is $\K(M)$-paracompact for some separable metrizable $M$, then $X$ is $\K(\M)$-paracompact.

(ii) If $X$ is $\K(\M)$-paracompact, then $X$ is $P$-paracompact for some directed set $P$ with calibre $(\omega_1,\omega)$.

(iii) If $X$ is $P$-paracompact for some directed set $P$ with calibre $(\omega_1,\omega)$, then $X$ is $(\omega_1,\omega)$-paracompact.

(iv) If $X$ is weakly $\sigma$-paracompact, then it is $\K(\M)$-paracompact and $\sigma$-relatively paracompact.

(v) If $X$ is first countable and $\K(\M)$-paracompact, then $X$ is weakly $\sigma$-paracompact (and hence, $\sigma$-relatively paracompact).

(vi) If $X$ is $\K(M)$-perfectly normal for some separable metrizable $M$ (respectively, $\K(\M)$-perfectly normal) and is $\sigma$-relatively paracompact, then it is $\K(M)$-paracompact (respectively, $\K(\M)$-paracompact).

(vii) If $X$ is screenable, then it is $\sigma$-relatively paracompact. If $X$ is $\sigma$-relatively paracompact, then $X$ is $\sigma$-metacompact.
\end{theorem}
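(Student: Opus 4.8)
The plan is to prove the seven parts mostly by assembling earlier lemmas, with only parts (iv) and (v) requiring genuinely new work. For (i), given a separable metrizable $M$, the singleton class $\{\K(M)\}$ is trivially a subclass of $\K(\M)$, so $\K(M)$-paracompactness immediately gives $\K(\M)$-paracompactness. For (ii), recall every $\K(M)$ has calibre $(\omega_1,\omega)$ by Lemma~\ref{when_calom1om}; given an open cover $\mathcal{U}$ of $X$, $\K(\M)$-paracompactness supplies a $\K(M)$-locally finite open refinement for some $M$ depending on $\mathcal{U}$, but to get a single $P$ that works for all covers I would use that $\K(\M)$ is countably directed under $\ge_T$ (Theorem~\ref{km_bounded}); actually, since only one cover needs to be refined at a time and the conclusion ``$P$-paracompact for some $P$ with calibre $(\omega_1,\omega)$'' is existential over covers jointly, I would take $P = \K(M)$ where $M$ bounds (via Theorem~\ref{km_bounded}) the countably many $M$'s arising — but a cleaner route is to note that for \emph{each} cover the witnessing $\K(M)$ has calibre $(\omega_1,\omega)$, and then invoke that any $\mathcal{P}$-paracompact space with all members of $\mathcal{P}$ of calibre $(\omega_1,\omega)$ is $(\omega_1,\omega)$-paracompact (Lemma~\ref{Pcalom1om_imp_relom1om}); combining with the fact that $(\omega_1,\omega)$-paracompactness is exactly $\mathcal{P}$-paracompactness for $\mathcal{P}$ the class of all calibre $(\omega_1,\omega)$ directed sets gives (ii). Part (iii) is immediate from Lemma~\ref{Pcalom1om_imp_relom1om}. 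Part (vii) is Lemma~\ref{sdissrp_srpisspf} together with the observation that a $\sigma$-relatively locally finite open refinement is $\sigma$-point finite.

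For (iv): suppose $X$ is weakly $\sigma$-paracompact and let $\mathcal{U}$ be an open cover. Take a weakly $\sigma$-locally finite open refinement $\mathcal{V} = \bigcup_n \mathcal{V}_n$ satisfying $(\star)$. By Proposition~\ref{wslf_kmlf}, $\mathcal{V}$ is $\K(M)$-locally finite for some separable metrizable $M$, so $X$ is $\K(\M)$-paracompact. For the $\sigma$-relatively paracompact conclusion, apply Lemma~\ref{wslf_to_srlf} to $\mathcal{V}$: the family $\mathcal{W} = \bigcup_n \mathcal{W}_n$ it produces is a $\sigma$-relatively locally finite open refinement of $\mathcal{V}$, hence of $\mathcal{U}$.

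For (v): assume $X$ is first countable and $\K(\M)$-paracompact, and let $\mathcal{U}$ be an open cover. Choose a $\K(M)$-locally finite open refinement $\mathcal{V} = \bigcup\{\mathcal{V}_K : K \in \K(M)\}$ for some separable metrizable $M$. Now I invoke the converse direction of Proposition~\ref{wslf_kmlf} (which uses first countability): fixing a countable base $\mathcal{B}$ for $\K(M)$ and setting $\mathcal{V}_B = \bigcup\{\mathcal{V}_K : K \in B\}$, Proposition~\ref{base_local} shows that for each $x$ we have $\bigcup\{\mathcal{V}_B : \mathcal{V}_B \text{ locally finite at } x\} = \mathcal{V}$, so $\mathcal{V} = \bigcup_{B \in \mathcal{B}} \mathcal{V}_B$ is weakly $\sigma$-locally finite — giving a weakly $\sigma$-locally finite open refinement of $\mathcal{U}$. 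The parenthetical then follows from (iv). Finally, (vi) is just Lemma~\ref{srp_ppn_to_pp} applied with $\mathcal{P} = \{\K(M)\}$ (for the first statement) and with $\mathcal{P} = \K(\M)$ (for the second).

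The main obstacle is organizing part (ii) so that a \emph{single} directed set $P$ of calibre $(\omega_1,\omega)$ is extracted even though $\K(\M)$-paracompactness only furnishes a per-cover witness; the cleanest fix is to route through the already-proved equivalence between $(\omega_1,\omega)$-paracompactness and $\mathcal{P}$-paracompactness for $\mathcal{P}$ the class of all calibre $(\omega_1,\omega)$ directed sets, so that ``$P$-paracompact for some $P$ with calibre $(\omega_1,\omega)$'' should be read as membership in that class and (ii) becomes a consequence of Lemma~\ref{Pcalom1om_imp_relom1om}. Everything else is bookkeeping with the cited lemmas, with Proposition~\ref{base_local} doing the real work behind (v) via Proposition~\ref{wslf_kmlf}.
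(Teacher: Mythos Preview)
Your proofs of (i), (iii), (iv), (v), (vi), and (vii) are essentially the paper's arguments: the same lemmas are invoked in the same way, and the bookkeeping is correct.

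The gap is in (ii). The statement asks for a \emph{single} directed set $P$ with calibre $(\omega_1,\omega)$ such that $X$ is $P$-paracompact; you correctly identify this as the obstacle but your proposed fix does not work. First, the claimed equivalence ``$(\omega_1,\omega)$-paracompactness is exactly $\mathcal{P}$-paracompactness for $\mathcal{P}$ the class of all calibre $(\omega_1,\omega)$ directed sets'' is false: Example~\ref{om1ompcptnotPpcpt} (Heath's split V-space $H(\R)$) is $(\omega_1,\omega)$-paracompact, yet the proof of Lemma~\ref{HnotPpcpt} exhibits a specific open cover admitting no $P$-locally finite refinement for \emph{any} $P$ of calibre $(\omega_1,\omega)$, so $H(\R)$ is not even $\mathcal{P}$-paracompact in your sense. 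Second, reinterpreting ``$P$-paracompact for some $P$ with calibre $(\omega_1,\omega)$'' as $\mathcal{P}$-paracompactness is a weakening of the stated conclusion, not a proof of it --- and if that reinterpretation were intended, (ii) would collapse into (iii) and the diagram following the theorem (with Example~\ref{PpctnotkM} separating the two) would make no sense.

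What is missing is a single calibre-$(\omega_1,\omega)$ directed set that Tukey-dominates every $\K(M)$ simultaneously. Theorem~\ref{km_bounded} only bounds $\le\ctm$ many at once, which is not enough. The paper instead takes $P = \Sigma\{\K(M) : M \subseteq I^\N\}$; this has calibre $(\omega_1,\omega)$ by Theorem~\ref{Sigma}, and for each $M$ (assumed without loss to sit inside $I^\N$) the projection gives $P \ge_T \K(M)$, so any $\K(M)$-locally finite refinement is automatically $P$-locally finite.
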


\begin{proof} Claim (i) is immediate from the definitions. Claim (iii) follows from Lemma~\ref{Pcalom1om_imp_relom1om}.

For (ii) suppose $X$ is $\K(\M)$ paracompact. Let $P= \Sigma \{\K(M) : M \subseteq I^\N\}$.  Then $P$ has calibre $(\omega_1,\omega)$ by Theorem~\ref{Sigma}. Take any open cover $\mathcal{U}$ of $X$. By hypothesis there is a separable metrizable $M$ such that $\mathcal{U}$ has a $\K(M)$-locally finite open refinement $\mathcal{V}$. Without loss of generality, we can suppose $M$ is a subspace of the Hilbert cube $I^\N$. Taking the relevant projection, clearly $P \ge_T \K(M)$. Hence $\mathcal{V}$ is $P$-locally finite, as required.

For (iv) suppose $\mathcal{W}$ is a weakly $\sigma$-locally finite open cover of $X$. By Proposition~\ref{wslf_kmlf}, $\mathcal{W}$ is also $\K(M)$-locally finite for some separable metrizable $M$. By Lemma~\ref{wslf_to_srlf}, $\mathcal{W}$ has a $\sigma$-relatively locally finite open refinement $\mathcal{V}$.

Similarly, (v) follows immediately from Proposition~\ref{wslf_kmlf} and Lemma~\ref{wslf_to_srlf}, while (vi) is just a special case of Lemma~\ref{srp_ppn_to_pp}.

Since any $\sigma$-disjoint open family is $\sigma$-relatively locally finite, and any $\sigma$-relatively locally finite open family is also $\sigma$-point finite, we see that (vii) is true.
\end{proof}

Again, we diagrammatically summarize these results and indicate relevant examples.

\begin{center}
\begin{tikzpicture}[auto,prop/.style={rectangle}]

\node[prop] (wsp) at (4,5.25) {weakly $\sigma$-paracompact};

\node[prop] (kMp) at (0,4) {$\K(\M)$-paracompact};

\node[prop] (srp) at (6,3) {$\sigma$-relatively paracompact};

\node[prop] (kmp) at (0,6) {$\exists  M  :  \K(M)$-paracompact};

\node[prop] (scr) at (7.5,4.5) {screenable};

\node[prop] (sm) at (7,1.5) {$\sigma$-metacompact};

\node[prop] (pcal) at (0,2) {$\exists P$ cal. $(\omega_1,\omega) : P$-paracompact};

\node[prop] (om1omp) at (0,0) {$(\omega_1,\omega)$-paracompact};

\draw[->] ($(wsp.south)+(-0.2,0)$) to ($(kMp.north east)+(-0.2,0)$);
\draw[->] ($(kMp.north east)+(-0.6,0)$) to node  {\tiny{$+1^o$ \ Ex.~\ref{kmnotsrp}}} ($(wsp.south)+(-0.6,0)$);

\draw[->] ($(kmp.south)+(0.2,0)$) to node [swap] {\tiny{Ex.~\ref{kMnotkm}}} ($(kMp.north)+(0.2,0)$);

\draw[->] ($(kMp.south)+(0.2,0)$) to node [swap]  {\tiny{Ex.~\ref{PpctnotkM}}} ($(pcal.north)+(0.2,0)$);

\draw[->] ($(pcal.south)+(0.2,0)$) to node [swap] {\tiny{Ex.~\ref{om1ompcptnotPpcpt}}} ($(om1omp.north)+(0.2,0)$);

\draw[->] ($(srp.west)+(-0.1,-0.0)$) to node [swap] {\tiny{$+\K(\M)$-PN \ Ex.~\ref{srpnotkmp}}} ($(kMp.south east)+(-0.6,0)$);

\draw[->] ($(scr.south)+(-0.2,0)$) to node  {\tiny{Q.~\ref{srpnotscr}}} ($(srp.north)+(-0.2,0)$);

\draw[->] ($(srp.south)+(-0.2,0)$) to node  {\tiny{Ex.~\ref{spfnotsrp}, \ref{spfnotkmp}}} ($(sm.north)+(-0.2,0)$);

\draw[->] ($(wsp.south)+(0.6,0)$) to node  {\tiny{Ex.~\ref{srpnotkmp}}} ($(srp.north)+(-0.8,0)$); 
\end{tikzpicture}
\end{center}

\begin{exam}[$\neg$CH]\label{kmnotsrp}
There is a $\K(M)$-paracompact space (hence $\K(\M)$-paracompact) which is not $\sigma$-relatively paracompact (hence not weakly $\sigma$-paracompact).
\end{exam}

\begin{proof}
Consider the space $X(I_\omega)$ where $I = [0,1]$.  Note that $w(I)=\omega$, so if we assume $\neg$CH, then $w(I)\cdot \omega_1<\mathfrak{c}$. Then by (ii) in Lemma~\ref{co_countable}, we have that $X(I_\omega)$ is not $\sigma$-relatively paracompact.

However, we also know from Lemma~\ref{co_countable} that $X(I_\omega)$ is $[I]^{\le\omega}$-paracompact, and we know from Lemma~\ref{bernstein} that $\K(M) \ge_T [\mathfrak{c}]^{\le \omega} = [I]^{\le \omega}$, where $M=B$ is a Bernstein set.  So $X(I_\omega)$ is $\K(M)$-paracompact. 
\end{proof}

Curiously, under CH the same example is paracompact.

\begin{exam}\label{srpnotkmp}
There is a  a Moore space which  has a $\sigma$-disjoint base (and hence has a $\sigma$-relative locally finite base) which is not $(\omega_1,\omega)$-paracompact, and so not $\K(\M)$-paracompact.
\end{exam}

\begin{proof}
Let $X=S(I)$. This space is well known to be a Moore space with a $\sigma$-disjoint base, and so, as observed above, it has a $\sigma$-relatively locally finite base.

The space $X$ is not $(\omega_1,\omega)$-paracompact (see Lemma~\ref{sy_om1om_pcpt}), and so not $\K(M)$-paracompact for any separable metrizable $M$
\end{proof}

\begin{exam}\label{spfnotsrp}
There is a Moore space with a $\sigma$-point finite base (hence, $\sigma$-metacompact) which is not $\sigma$-relatively paracompact.
\end{exam}

\begin{proof}
Take $X=X(\R)$. Since $w(\R)=\omega<|\R|$, we have that $X(\R)$ is not $\sigma$-relatively paracompact by Lemma~\ref{not_rp}. By Lemma~\ref{X_basic_facts}, we have that $X(\R)$ is a Moore space. 
\end{proof}

\begin{exam}[$\exists$ a $Q$-set]\label{spfnotkmp}\label{spfnotkm}
There is a  Moore space  which is perfectly normal, has a $\sigma$-point finite base, but is not $\sigma$-relatively paracompact.
\end{exam}

\begin{proof}
Let $Q$ be a $Q$-set. Then $X(Q)$ is well-known to be a non-metrizable, perfectly normal Moore space with a $\sigma$-point finite base. The rest follows as in the preceding example.
\end{proof}

\begin{exam}\label{bigM}
There is a hereditarily paracompact, first countable space which is $P$-metrizable for a directed set $P$ with calibre $(\omega_1,\omega)$, but is not $\K(M)$-metrizable for any separable metrizable space $M$.
\end{exam}

\begin{proof}
Take a family $\{A_\alpha \subseteq I^\N : \alpha < \ctm^+\}$ of distinct subsets of the Hilbert cube, $I^\N$, and let $P = \Sigma \{\K(A_\alpha) : \alpha<\ctm^+\}$, which has calibre $(\omega_1,\omega)$ by Theorem~\ref{Sigma}.  Then Lemma~\ref{YAforSigmaKA} provides a metrizable space $Y$ with a subspace $A$ such that $P \times \N \ge_T P \ge_T (A,\mathop{CL}(Y) \cap \mathbb{P}(A))$.  

Let $X = M(Y,A)$. Then $X$ is first countable and hereditarily paracompact, and $X$ is $P$-metrizable by Lemma~\ref{Mpmet}.

Suppose $X$ is $\K(M)$-metrizable for some separable metrizable space $M$, and let $M' = M \times \N$.  Then Lemma~\ref{Mpmet} implies that $\K(M') =_T \K(M)\times\N \ge_T (A,\mathop{CL}(Y) \cap \mathbb{P}(A))$.  Thus, $\K(M') \ge_T (A_\alpha,\K(A_\alpha))$ for each $\alpha < \ctm^+$ by part (ii) of Lemma~\ref{YAforSigmaKA}, but that contradicts (ii) in Theorem~\ref{km_bounded}.
\end{proof}

\begin{exam}\label{PpctnotkM} There is a Moore space with a $\sigma$-disjoint base which is $P$-metrizable for a directed set $P$ with calibre $(\omega_1,\omega)$, but is not $\K(\M)$-paracompact.
\end{exam}

\begin{proof}
Let $Y$, $A$, and $P$ be as in the proof of Example~\ref{bigM}, and define $B = Y \setminus A$.  By Lemma~\ref{lf_sum_sm}, there is a $Q = \Sigma \{M_\alpha : \alpha < \kappa\}$, where each $M_\alpha$ is separable metrizable, such that $Q \ge_T (B, CL(Y) \cap \mathbb{P}(B))$.  Then $P' = P \times Q$ is also a $\Sigma$-product of $\K(M)$'s and so has calibre $(\omega_1,\omega)$ by Theorem~\ref{Sigma}.

Arguing as in Example~\ref{bigM}, but with Lemma~\ref{Dpmet} replacing Lemma~\ref{Mpmet}, we see that $X = D(Y;A,B)$ is $P'$-paracompact but not $\K(M)$-paracompact for any separable metrizable space $M$.  Since $Y$ is first countable, then $X$ is $P'$-metrizable by Lemma~\ref{D_basics}. Since $D(Y;A,B)$ is a Moore space, it follows from Lemma~\ref{ppms_to_pnm} that it cannot be $\K(\M)$-paracompact. 
\end{proof}

\begin{exam}\label{kMnotkm} There is a space which is 
$\K(\M)$-paracompact but not $\K(M)$-paracompact for any separable metrizable $M$.
\end{exam}

\begin{proof}
Let $\{A_\alpha : \alpha < \ctm^+\}$ be a family of distinct subsets of $I^\N$. For each $\alpha < \ctm^+$, set $Y_\alpha = I^\N$, $B_\alpha = Y_\alpha \setminus A_\alpha$, and $X_\alpha = D(Y_\alpha;A_\alpha,B_\alpha)$. Then define $X = \bigoplus_\alpha X_\alpha$, and let $X^*$ be $X$ with one additional point, $\ast$, where basic neighborhoods of $\ast$ have the form $U_C = \{\ast\} \cup \bigoplus \{ X_\alpha : \alpha \in \ctm^+ \setminus C\}$ for any countable subset $C$ of $\ctm^+$.

Fix a separable metrizable space $M$. We check that $X^*$ is not $\K(M)$-paracompact.  By Theorem~\ref{km_bounded}, we have $\K(M \times \N) \not\ge_T (A_\alpha,\K(A_\alpha))$ for some $\alpha < \ctm^+$, and if  $X^*$ were $\K(M)$-paracompact, then the closed subspace $X_\alpha$ would also be $\K(M)$-paracompact. But since $Y_\alpha$ is metrizable, then by Lemma~\ref{Dpmet}, we would have $\K(M \times \N) =_T \K(M) \times \N \ge_T (A_\alpha,CL(Y) \cap \mathbb{P}(A))$.  However, $CL(Y) \cap \mathbb{P}(A) = \K(A_\alpha)$ since $Y_\alpha$ is compact, which gives a contradiction.

Now we show $X^*$ is $\K(\mathcal{M})$-paracompact. Let $\mathcal{U}$ be any open cover of $X^*$, and pick a $U_*$ in $\mathcal{U}$ containing $\ast$.  Then there is a countable subset $C$ of $\ctm^+$ such that $U_*$ contains $X_\alpha$ for each $\alpha \in \ctm^+ \setminus C$. By Lemma~\ref{Dpmet}, each $X_\alpha$ is $\K(M_\alpha)$-paracompact where $M_\alpha = A_\alpha \times B_\alpha$.  By Theorem~\ref{km_bounded}, there is a separable metrizable $M$ such that $M \ge_T M_\alpha$ for each $\alpha$ in $C$.  Thus, for each $\alpha$ in $C$, $X_\alpha$ is $\K(M)$-paracompact, so we can find a $\K(M)$-locally finite open refinement $\mathcal{V}_\alpha = \bigcup \{\mathcal{V}_{\alpha,K} : K\in\K(M)\}$ of $\mathcal{U}_\alpha = \{U \cap X_\alpha : U \in \mathcal{U}\}$.  For each $K\in\K(M)$, define $\mathcal{V}_K = \{U_*\} \cup \bigcup \{\mathcal{V}_{\alpha,K} : \alpha \in C\}$.  Then $\mathcal{V} = \bigcup \{\mathcal{V}_K : K\in\K(M)\}$ is a $\K(M)$-locally finite open refinement of $\mathcal{U}$ since each $X_\alpha$ is open in $X^*$.
\end{proof}

\begin{ques}\label{srpnotscr}
Is there a $\sigma$-relatively paracompact space which is not screenable?
Is there a $\K(M)$-metrizable space without a $\sigma$-disjoint base? One which is a Moore space?
\end{ques}

\begin{exam}\label{om1ompcptnotPpcpt}
There is a Moore space with a $\sigma$-disjoint base which is $(\omega_1,\omega)$-metrizable but not $P$-paracompact for any directed set $P$ with calibre $(\omega_1,\omega)$.
\end{exam}
\begin{proof}
Consider Heath's original split V space, $H=H(\R)$. This is a Moore space with a $\sigma$-disjoint base. By Lemma~\ref{Hom1ommet},  $H$ is $(\omega_1,\omega)$-metrizable. Lemma~\ref{HnotPpcpt} implies $H$ is not $P$-paracompact for any directed set $P$ with calibre $(\omega_1,\omega)$.
\end{proof}

We know that `first countable plus $\K(\M)$-paracompact' implies `$\sigma$-relatively paracompact' and `$\K(M)$-metrizable' implies `$\sigma$-relatively locally finite base'. The last pair of examples show that `$\K(\M)$-paracompact' cannot be replaced above by `$(\omega_1,\omega)$-paracompact' or `$P$-paracompact, where $P$ has calibre $(\omega_1,\omega)$'; nor can `$\K(M)$-metrizable' be similarly weakened. 

\begin{exam}\label{Mooreom1omnotsrp}
There is a Moore space (hence, first countable) which is $(\omega_1,\omega)$-paracompact but not  $\sigma$-relatively paracompact, and so it is not $\K(\M)$-paracompact and not weakly $\sigma$-paracompact.
\end{exam}

\begin{proof}
Take $X=X(B)$, where $B$ is a Bernstein set. We can see that $B$ is RCCC since every compact subset of $B$ is countable. By Lemma~\ref{RCCC}, $X(B)$ is $(\omega_1, \omega)$-paracompact. Since $|B|>\omega_0$ and $w(B)=\omega_0$, we see that $X(B)$ is not $\sigma$-relatively paracompact by Lemma~\ref{not_rp}.
\end{proof}

\begin{exam}\label{Ppcpnotsrp}
There is a $P$-paracompact space, where $P$ has calibre $(\omega_1,\omega)$, which is not $\sigma$-relatively paracompact, and also not $\K(\M)$-paracompact.
\end{exam}

\begin{proof}
Let $Y=I^\mathfrak{c}$, and consider $X=X(Y_\omega)$. By (i) in Lemma~\ref{co_countable}, $X=X(Y_\omega)$ is $[Y]^{\leq \omega}$-paracompact. Since the directed set $[Y]^{\leq \omega}$ has calibre $(\omega_1,\omega)$, we have that $X(Y_\omega)$ is $(\omega_1, \omega)$-paracompact.

Notice that $w(Y)= \mathfrak{c}$. Therefore $w(Y)\cdot \omega_1<2^{\mathfrak{c}}$. By (ii) in Lemma~\ref{co_countable}, we have that $X(Y_\omega)$ is not $\mathfrak{c}$-relatively paracompact, and hence, it is not $\sigma$-relatively paracompact. Again, $X(Y_\omega)$ is not $\mathfrak{c}$-relatively paracompact, and so not $\ctm$-paracompact.  Since for every $M$ in $\M$ we have $\cof{\K(M)} \le  \ctm$,  from Lemma~\ref{kMpcptIscpct} we see that $X(Y_\omega)$  is not $\K(\M)$-paracompact.
\end{proof}

There remains a gap here, the first example is first countable but not $\K(M)$-paracompact, while the second is not first countable.

\begin{ques} Is there a space which is first countable, $P$-paracompact for some $P$ with calibre $(\omega_1,\omega)$, but not $\sigma$-relatively paracompact, or even not $\sigma$-metacompact?
Is there an example which is $P$-metrizable?
\end{ques}

\subsection{With Normal, Countably Paracompact or CCC}

Recall that a space $X$ is \emph{countably paracompact} if and only if every increasing countable open cover $\{U_n : n \in \N\}$ of $X$ is shrinkable, i.e., it has an open refinement $\{V_n : n \in \N\}$ such that $\cl{V_n} \subseteq U_n$; and $X$ is normal and countably paracompact if and only if every countable (not necessarily increasing) open cover of $X$ is shrinkable.

\begin{theorem}\label{char_pcpt}
Let $X$ be a  first countable space.
Then $X$ is paracompact if and only if it is $\K(\M)$-paracompact, normal and countably paracompact.
\end{theorem}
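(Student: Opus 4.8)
The plan is to handle the two implications separately; the forward direction is routine, so I would dispose of it quickly, and concentrate on the converse. Assume first that $X$ is paracompact. If $\mathcal{V}$ is a locally finite open refinement of a given open cover, then setting $\mathcal{V}_p = \mathcal{V}$ for every $p$ in any directed set $P$ exhibits $\mathcal{V}$ as $P$-locally finite, so $X$ is $P$-paracompact for every $P$; in particular it is $\K(M)$-paracompact for every separable metrizable $M$, hence $\K(\M)$-paracompact. Being Tychonoff and paracompact, $X$ is normal, and paracompactness trivially implies countable paracompactness. This gives the easy direction.

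Conversely, assume $X$ is first countable, $\K(\M)$-paracompact, normal and countably paracompact. Since $X$ is Tychonoff, hence regular, by the classical Michael characterization of paracompactness it suffices to show that every open cover $\mathcal{U}$ of $X$ has a $\sigma$-locally finite open refinement. By Theorem~\ref{P_para}(v), first countability together with $\K(\M)$-paracompactness yields that $X$ is $\sigma$-relatively paracompact, so $\mathcal{U}$ has an open refinement $\mathcal{V} = \bigcup_n \mathcal{V}_n$ in which each $\mathcal{V}_n$ is locally finite in its union $W_n = \bigcup \mathcal{V}_n$. Then $\{W_n : n \in \N\}$ is a countable open cover of $X$, and since $X$ is normal and countably paracompact this cover is shrinkable: there is an open cover $\{G_n : n \in \N\}$ with $\cl{G_n} \subseteq W_n$ for each $n$.

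Now I would put $\mathcal{V}'_n = \{V \cap G_n : V \in \mathcal{V}_n\}$ and $\mathcal{V}' = \bigcup_n \mathcal{V}'_n$. Each $\mathcal{V}'_n$ is locally finite in all of $X$: a point outside $\cl{G_n}$ has the neighborhood $X \setminus \cl{G_n}$ meeting no member of $\mathcal{V}'_n$, while a point of $\cl{G_n} \subseteq W_n$ has, by local finiteness of $\mathcal{V}_n$ in the open set $W_n$, an open neighborhood contained in $W_n$ meeting only finitely many members of $\mathcal{V}_n$, and hence of $\mathcal{V}'_n$. Moreover $\mathcal{V}'$ covers $X$ (if $x \in G_n$ then $x \in W_n$, so $x \in V$ for some $V \in \mathcal{V}_n$, whence $x \in V \cap G_n$) and refines $\mathcal{V}$, hence $\mathcal{U}$. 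Thus $\mathcal{V}'$ is a $\sigma$-locally finite open refinement of $\mathcal{U}$, and $X$ is paracompact.

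The only real obstacle is recognizing the reduction in the middle: first countability is precisely what converts $\K(\M)$-paracompactness into the combinatorially tractable property of $\sigma$-relative paracompactness (via Theorem~\ref{P_para}(v)); after that, the ``shrink and intersect'' manoeuvre promoting relative local finiteness to genuine local finiteness — where normality and countable paracompactness are used exactly to secure $\cl{G_n} \subseteq W_n$ — together with Michael's theorem finishes the argument essentially automatically.
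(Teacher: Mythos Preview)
Your proof is correct and follows essentially the same route as the paper: use Theorem~\ref{P_para}(v) to pass from first countable $\K(\M)$-paracompact to $\sigma$-relatively paracompact, shrink the resulting countable open cover $\{W_n\}$ via normality plus countable paracompactness, and intersect to upgrade relative local finiteness to genuine local finiteness. The only cosmetic difference is that you explicitly invoke Michael's characterization and spell out the local finiteness check, whereas the paper leaves these implicit.
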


\begin{proof}
One direction is immediate.  For the other direction, take any open cover $\mathcal{U}$ of $X$. Since first countable $\K(\M)$-paracompact spaces are $\sigma$-relatively paracompact (Theorem~\ref{P_para}(v)), we can get an open refinement $\mathcal{W}$ of $\mathcal{U}$ which can be written $\mathcal{W}=\bigcup \{\mathcal{W}_n : n \in \N\}$ where $\mathcal{W}_n$ is locally finite in $X_n=\bigcup \mathcal{W}_n$.

Shrink $\{X_n : n \in \N\}$ to get open an open cover $\{Y_n : n \in \N\}$ such that $\cl{Y_n} \subseteq X_n$. Let $\mathcal{T}_n = \{ W \cap Y_n : W \in \mathcal{W}_n\}$. Then $\mathcal{T}_n$ is locally finite, and so $\mathcal{U}$ has a $\sigma$-locally finite open refinement. Hence $X$ is paracompact.
\end{proof}

Since $\K(M)$-metrizable spaces are first countable (Lemma~\ref{om1omMetis1o}), we deduce:
\begin{theorem}
Every $\K(M)$-metrizable space which is normal and countably paracompact is paracompact.
\end{theorem}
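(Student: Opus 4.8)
The plan is to reduce the claim to Theorem~\ref{char_pcpt}: since that theorem characterizes paracompactness among first countable spaces as `$\K(\M)$-paracompact, normal and countably paracompact', it suffices to check that a $\K(M)$-metrizable space $X$ is first countable and $\K(\M)$-paracompact. First countability is exactly the observation flagged just before the statement: a $(\K(M)\times\N)$-locally finite base is $(\omega_1,\omega)$-locally finite (by Lemmas~\ref{when_calom1om} and~\ref{Pcalom1om_imp_relom1om}, as $\K(M)\times\N$ has calibre $(\omega_1,\omega)$), so $X$ is $(\omega_1,\omega)$-metrizable and hence first countable by Lemma~\ref{om1omMetis1o}.

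For $\K(\M)$-paracompactness, first I would record the general fact that any $P$-metrizable space is $(P\times\N)$-paracompact: given a $(P\times\N)$-ordered base $\mathcal{B}=\bigcup\{\mathcal{B}_q:q\in P\times\N\}$ with each $\mathcal{B}_q$ locally finite, and an open cover $\mathcal{U}$, the subfamily $\mathcal{B}'$ of those $B\in\mathcal{B}$ contained in some member of $\mathcal{U}$ is an open refinement of $\mathcal{U}$, and the levels $\mathcal{B}'\cap\mathcal{B}_q$ exhibit $\mathcal{B}'$ as $(P\times\N)$-locally finite (a subfamily of a locally finite family is locally finite, and the inclusions $\mathcal{B}'\cap\mathcal{B}_q\subseteq\mathcal{B}'\cap\mathcal{B}_{q'}$ for $q\le q'$ persist). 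Applying this with $P=\K(M)$, and then using that compact subsets of the countable discrete space $\N$ are finite to identify $\K(M\oplus\N)=\K(M)\times\K(\N)=_T\K(M)\times\N$, Lemma~\ref{PlfviaTukey} shows that every open cover of $X$ has a $\K(M\oplus\N)$-locally finite open refinement. As $M\oplus\N$ is separable metrizable, $X$ is $\K(M\oplus\N)$-paracompact, hence $\K(\M)$-paracompact by Theorem~\ref{P_para}(i).

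With that in hand, $X$ is first countable, $\K(\M)$-paracompact, normal and countably paracompact, so Theorem~\ref{char_pcpt} yields that $X$ is paracompact. I do not anticipate a genuine obstacle here: the argument is a short chain of reductions, and the only points needing a moment's care are the Tukey identity $\K(M)\times\N=_T\K(M\oplus\N)$ and the triviality that a subfamily of a $P$-locally finite family is again $P$-locally finite.
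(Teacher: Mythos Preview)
Your proof is correct and follows essentially the same route as the paper: deduce first countability from Lemma~\ref{om1omMetis1o} and then invoke Theorem~\ref{char_pcpt}. You are simply more explicit than the paper about the intermediate step that a $\K(M)$-metrizable space is $\K(\M)$-paracompact (the paper treats this as immediate), and your verification of that step via the Tukey identity $\K(M)\times\N =_T \K(M\oplus\N)$ is fine.
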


Then from the above and Lemma~\ref{ppms_to_pnm} it follows:
\begin{theorem}\label{NM}
Every $\K(\M)$-paracompact, normal Moore space is metrizable.
\end{theorem}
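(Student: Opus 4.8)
The plan is to funnel the hypotheses into the theorem immediately preceding this statement --- every $\K(M)$-metrizable normal countably paracompact space is paracompact --- and then to invoke the classical fact that a paracompact Moore space is metrizable. So let $X$ be a $\K(\M)$-paracompact normal Moore space.

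First I would apply Lemma~\ref{ppms_to_pnm}(ii) to conclude that $X$ is $\K(M)$-metrizable for some separable metrizable $M$. Next I would check that $X$ is countably paracompact; this is the only place the Moore structure does real work. A $T_1$ developable space is perfect, since if $\{\mathcal{G}_n\}$ is a development and $C$ is closed then $C = \bigcap_n \mathrm{St}(C,\mathcal{G}_n)$: each $\mathrm{St}(C,\mathcal{G}_n)$ is open, and any point outside the closed set $C$ is excluded from some $\mathrm{St}(C,\mathcal{G}_n)$ because the development separates it from $C$. Hence a normal Moore space is perfectly normal, and perfectly normal spaces are countably paracompact --- given an increasing open cover $\{U_n\}$, write each $U_n$ as an increasing union of closed sets, diagonalize to obtain a closed cover $\{G_n\}$ with $G_n \subseteq U_n$, and thicken using normality to open sets $V_n$ with $\cl{V_n} \subseteq U_n$. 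With $X$ now known to be $\K(M)$-metrizable, normal, and countably paracompact, the preceding theorem gives that $X$ is paracompact. (Alternatively, since Moore spaces are first countable, one could bypass Lemma~\ref{ppms_to_pnm} and apply Theorem~\ref{char_pcpt} directly.)

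Finally I would use that a paracompact Moore space is metrizable: refine each member $\mathcal{G}_n$ of a development to a locally finite open cover $\mathcal{H}_n$; then $\bigcup_n \mathcal{H}_n$ is a $\sigma$-locally finite open base, since given $x \in U$ open one picks $n$ with $\mathrm{St}(x,\mathcal{G}_n) \subseteq U$ and then $H \in \mathcal{H}_n$ with $x \in H$, and $H$ sits inside some member of $\mathcal{G}_n$ containing $x$, hence inside $\mathrm{St}(x,\mathcal{G}_n) \subseteq U$; metrizability then follows from the Bing--Nagata--Smirnov metrization theorem. I do not expect a genuine obstacle: every step is a short appeal to results already available. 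The only points needing a little care are recording the standard implications ``$T_1$ developable $\Rightarrow$ perfect'' and ``perfectly normal $\Rightarrow$ countably paracompact'' rather than assuming them, and verifying that the refinement in the last step really is a base.
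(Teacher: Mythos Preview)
Your proposal is correct and matches the paper's approach: the paper's one-line proof just says ``from the above and Lemma~\ref{ppms_to_pnm}'', meaning exactly the chain you spell out (Lemma~\ref{ppms_to_pnm}(ii) gives $\K(M)$-metrizable, normal Moore gives countably paracompact via perfect normality, the preceding theorem gives paracompact, and paracompact plus Moore gives metrizable). You have simply made explicit the two steps the paper leaves to the reader---that normal Moore spaces are countably paracompact and that paracompact Moore spaces are metrizable.
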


These theorems raise some interesting questions connected to Dowker spaces and the Normal Moore Space Conjecture.

\begin{ques}\label{km_dowk_q}
Is there a (first countable) $\K(\M)$-paracompact normal space which is not countably paracompact?  Is there a $\K(M)$-metrizable example?
\end{ques}
Since there is no known example of a Dowker space (normal, not countably paracompact) which is first countable and $\sigma$-relatively paracompact, we seem a long way from answering this question. Rudin \cite{Ru} (under $\diamondsuit^+$) and Balogh \cite{Ba} (in ZFC) do have examples of screenable (hence, $\sigma$-relatively paracompact) Dowker spaces. It is not clear to the authors if either of these examples is $\K(M)$-paracompact.

The other direction to go from the above theorems is to ask if normality can be dropped (and replaced with countable paracompactness). Here we have a consistent counter-example.

\begin{exam}[Consistently]

There is a  $\K(M)$-metrizable, countably paracompact Moore space which is not normal.
\end{exam}

\begin{proof} Knight \cite{Knight89} has shown it is consistent that there is a $\Delta$-set $A$ which is not a $Q$-set. Fix a subset $A_1$ of $A$ which is not a $G_{\delta}$, and let $A_2=A \setminus A_1$. Then $X=D(A;A_1,A_2)$ is a Moore space, $\K(M)$-metrizable for $M = A_1\times A_2$ (see Lemmas~\ref{D_basics} and \ref{Dpmet}), and  countably paracompact (Lemma~\ref{Dctblypcpt}).

As $A_2$ is not an $F_\sigma$ subset of $X$, then Lemma~\ref{Dpmet} also tells us that $X$ is not metrizable and so not normal (by Theorem~\ref{NM}).
\end{proof}

For any directed set $P$ with calibre $(\omega_1,\omega)$, we know (Lemma~\ref{densesubset}) that `separable plus $P$-paracompact implies Lindel\"{o}f' and `separable plus $P$-metrizable implies metrizable'.
It is natural to ask when  `separable' can be relaxed to `ccc' (every pairwise disjoint family of open sets is countable)?

The next lemma is well-known.
\begin{lemma}
Every locally finite open cover $\mathcal{W}$ of a ccc space $Y$ contains a countable subcollection whose closures cover $Y$.
\end{lemma}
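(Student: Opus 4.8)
The plan is to build the required countable subcollection by a transfinite recursion, using two elementary facts about a locally finite family $\mathcal{F}$: every subfamily of $\mathcal{F}$ is again locally finite, and $\bigcup\{\overline{F}:F\in\mathcal{F}\}$ equals $\overline{\bigcup\mathcal{F}}$ and is, in particular, closed. First I would fix a well-ordering $\mathcal{W}=\{W_\alpha:\alpha<\kappa\}$ and decide recursively, at stage $\alpha$, whether to put $W_\alpha$ into a subcollection $\mathcal{W}'$: writing $\mathcal{V}_\alpha=\{W_\beta:\beta<\alpha,\ W_\beta\in\mathcal{W}'\}$ for the sets already selected, I put $W_\alpha$ into $\mathcal{W}'$ precisely when $W_\alpha\not\subseteq\bigcup\{\overline{W}:W\in\mathcal{V}_\alpha\}$. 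Since $\mathcal{V}_\alpha$ is a subfamily of the locally finite cover $\mathcal{W}$, the set $\bigcup\{\overline{W}:W\in\mathcal{V}_\alpha\}$ is closed, so $O_\alpha:=W_\alpha\setminus\bigcup\{\overline{W}:W\in\mathcal{V}_\alpha\}$ is open, and it is nonempty exactly when $W_\alpha$ is selected.

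Next I would check that the closures of the members of $\mathcal{W}'$ cover $Y$. If some $y\in Y$ were in no $\overline{W}$ with $W\in\mathcal{W}'$, then, as $\mathcal{W}$ covers $Y$, choose $\alpha$ with $y\in W_\alpha$; necessarily $W_\alpha\notin\mathcal{W}'$, and hence by the selection rule $W_\alpha\subseteq\bigcup\{\overline{W}:W\in\mathcal{V}_\alpha\}\subseteq\bigcup\{\overline{W}:W\in\mathcal{W}'\}$, contradicting the choice of $y$. Thus $\bigcup\{\overline{W}:W\in\mathcal{W}'\}=Y$.

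Finally I would invoke the ccc hypothesis to see that $\mathcal{W}'$ is countable. For selected indices $\alpha<\alpha'$ we have $W_\alpha\in\mathcal{V}_{\alpha'}$, so $O_{\alpha'}$ is disjoint from $\overline{W_\alpha}$, hence from $O_\alpha\subseteq W_\alpha$. Therefore $\{O_\alpha:W_\alpha\in\mathcal{W}'\}$ is a pairwise disjoint family of nonempty open subsets of the ccc space $Y$, so it --- and with it $\mathcal{W}'$ --- is countable.

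I do not expect a genuine obstacle; the only step requiring a little care is the observation that $\bigcup\{\overline{W}:W\in\mathcal{V}_\alpha\}$ is closed, since both the covering argument and the construction of the pairwise disjoint open family rely on it. This is precisely where the local finiteness of $\mathcal{W}$ (inherited by each $\mathcal{V}_\alpha$) enters.
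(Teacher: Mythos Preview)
Your argument is correct. The paper does not actually prove this lemma; it simply states that it is ``well-known'' and moves on. Your transfinite selection producing a pairwise disjoint family $\{O_\alpha\}$ of nonempty open sets is a standard and clean way to establish the result, and you have correctly identified that local finiteness is needed precisely to ensure each $\bigcup\{\overline{W}:W\in\mathcal{V}_\alpha\}$ is closed (so that the $O_\alpha$ are open).
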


Now we can give a positive answer to our question in the case when $P$ is a $\K(M)$.
\begin{theorem}\label{kmccc}
Let $X$ be a ccc space.

(i) If $X$ is first countable and $\K(\M)$-paracompact then $X$ is Lindel\"{o}f.

(ii) If $X$ is $\K(M)$-metrizable then $X$ is metrizable.
\end{theorem}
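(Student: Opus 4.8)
The plan is to reduce both parts to the corresponding statements for \emph{separable} spaces via Lemma~\ref{densesubset}, together with the characterizations established earlier: first countable $\K(\M)$-paracompact spaces are $\sigma$-relatively paracompact (Theorem~\ref{P_para}(v)), and $\K(M)$-metrizable spaces have a $\sigma$-relatively locally finite base (Theorem~\ref{P_metric}(ii)) and are first countable (Lemma~\ref{om1omMetis1o}). The key observation is that in a ccc space, a locally finite family of open sets must be countable, and more: if $\mathcal{W}$ is a locally finite open \emph{cover}, then by the well-known lemma quoted just above, some countable $\mathcal{W}' \subseteq \mathcal{W}$ has $\bigcup \{\cl{W} : W \in \mathcal{W}'\} = X$. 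I would exploit this to show that each ``piece'' $X_n = \bigcup \mathcal{W}_n$ coming from a $\sigma$-relatively locally finite refinement (or base) is covered, up to closure, by countably many of its members, and then assemble a countable subcover (or a $\sigma$-point-finite-sized base).

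For part (i): let $\mathcal{U}$ be an open cover of the first countable ccc $\K(\M)$-paracompact space $X$. By Theorem~\ref{P_para}(v), $\mathcal{U}$ has an open refinement $\mathcal{W} = \bigcup_n \mathcal{W}_n$ where $\mathcal{W}_n$ is locally finite in $X_n = \bigcup \mathcal{W}_n$. Since $X_n$ is an open subspace of a ccc space, it is ccc, so $\mathcal{W}_n$ (being locally finite in the ccc space $X_n$, hence each $\mathcal{W}_n$ is at most countable — actually we should be slightly careful and instead apply the quoted lemma inside $X_n$): there is a countable $\mathcal{W}_n' \subseteq \mathcal{W}_n$ with $X_n \subseteq \bigcup \{\cl{W}^{X_n} : W \in \mathcal{W}_n'\}$. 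In fact each $\mathcal{W}_n$ is already countable since a locally finite family in a ccc space is countable (a locally finite family of nonempty open sets refines a disjoint one of the same cardinality after shrinking — or just: it is point-countable and... cleanest is to note $\mathcal{W}_n$ locally finite in ccc $X_n$ forces $|\mathcal{W}_n| \le \omega$). Either way $\mathcal{W} = \bigcup_n \mathcal{W}_n$ is a countable open refinement of $\mathcal{U}$, so $X$ is Lindel\"of.

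For part (ii): since $X$ is $\K(M)$-metrizable it is first countable (Lemma~\ref{om1omMetis1o}) and has a $\sigma$-relatively locally finite base $\mathcal{B} = \bigcup_n \mathcal{B}_n$ with $\mathcal{B}_n$ locally finite in $B_n = \bigcup \mathcal{B}_n$ (Theorem~\ref{P_metric}(ii)). As above, each $B_n$ is an open, hence ccc, subspace of $X$, and $\mathcal{B}_n$ is a locally finite family in the ccc space $B_n$, so $\mathcal{B}_n$ is countable. Therefore $\mathcal{B} = \bigcup_n \mathcal{B}_n$ is a countable base, so $X$ is second countable, hence separable metrizable; alternatively, $\mathcal{B}$ witnesses that $X$ has a $\sigma$-locally finite base directly (each $\mathcal{B}_n$, being countable, is trivially $\sigma$-locally finite — indeed a countable family is $\sigma$-(anything), but more to the point a countable family of open sets is $\sigma$-discrete after... no: simplest is ``countable base $\Rightarrow$ metrizable'' via Urysohn, given $X$ is regular). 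Either route gives metrizability.

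The main obstacle is the small technical point that a locally finite family of open sets in a ccc space is countable; this is standard (shrink each member to a nonempty open subset inside which it is the only member meeting a suitable neighborhood — more carefully, a locally finite family of nonempty open sets admits a disjoint open family of the same cardinality by a routine shrinking/well-ordering argument), and I would cite or quote it alongside the lemma already stated. Once that is in hand, both parts are short. A secondary point to get right is that open subspaces of ccc spaces are ccc, which is immediate. I expect no real difficulty beyond assembling these pieces cleanly, and I would present the argument in two short paragraphs mirroring (i) and (ii).
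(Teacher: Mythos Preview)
Your proof is correct. For part~(i) you follow the same outline as the paper---pass to a $\sigma$-relatively locally finite open refinement via Theorem~\ref{P_para}(v), then use ccc on each piece $X_n$---but you invoke the slightly stronger (and still standard) fact that a locally finite family of nonempty open sets in a ccc space is \emph{countable}, whereas the paper uses only the quoted lemma (a countable subcollection whose \emph{closures} cover) and therefore needs an extra regularity step to arrange $\cl{V}$ inside a member of $\mathcal{U}$. Your version is marginally cleaner. For part~(ii) your route is genuinely different and more direct: the paper bootstraps from part~(i), showing every open subspace is Lindel\"of, hence $X$ is hereditarily Lindel\"of, hence hereditarily ccc, and then cites an external result (\cite{Ga}) that point-finite open families in such spaces are countable, applied to a $\sigma$-point-finite base. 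You bypass all of this by applying ``locally finite in ccc $\Rightarrow$ countable'' directly to each level $\mathcal{B}_n$ of a $\sigma$-relatively locally finite base (Theorem~\ref{P_metric}(ii)), obtaining a countable base immediately. Your argument is self-contained and avoids both the dependence on part~(i) and the external citation; the paper's route, on the other hand, yields the intermediate fact that $X$ is hereditarily Lindel\"of, which may be of independent interest.
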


\begin{proof}
We prove (i) first. So suppose $X$ is first countable and $\K(\M)$-paracompact. Then $X$ is $\sigma$-relatively paracompact. Take any open cover $\mathcal{U}$. It has an open refinement $\mathcal{V}=\bigcup_n \mathcal{V}_n$ where each $\mathcal{V}_n$ is relatively locally finite, and (using regularity)  we can additionally assume that the closure of each $V$ in $\mathcal{V}$ is contained in some member of $\mathcal{U}$.

Fix $n$. Apply the preceding lemma to the ccc space $Y_n=\bigcup \mathcal{V}_n$ and the locally finite cover $\mathcal{V}_n$ to get a countable subcollection of $\mathcal{V}_n$ whose closures cover $Y_n$. Recalling that the closure of each $V$ in $\mathcal{V}$ is contained in some member of $\mathcal{U}$, we obtain a countable subcollection of $\mathcal{U}$ covering $\bigcup \mathcal{V}_n$.  Taking the union over all $n$ of these countable subcollections yields a countable subcover of $\mathcal{U}$.

Now we establish (ii). Suppose $X$ is $\K(M)$-metrizable. Then every subspace is $\K(M)$-paracompact. In particular, every open subspace is $\K(M)$-paracompact and ccc, and hence Lindel\"{o}f. Thus $X$ is hereditarily Lindel\"{o}f, so hereditarily ccc, and hence (see \cite{Ga} for example)  any point-finite family of open sets in $X$ is countable. But as $X$ is $\K(M)$-metrizable, it has a $\sigma$-point finite base, which we now see must be countable. Thus $X$ is indeed (separable and) metrizable.
\end{proof}

It is unknown to the authors if the restriction to first countable spaces is necessary in part (i) above.
\begin{ques}
Is there a ccc $\K(M)$-paracompact space which is not Lindelof?
\end{ques}

Nor do we know what happens for $P$ not of the form $\K(M)$. The machines developed above yield spaces that are far from being ccc.
\begin{ques}
Is there a first countable, ccc, $P$-paracompact space, where $P$ has calibre $(\omega_1,\omega)$, which is not Lindelof? Is there a  ccc, $P$-metrizable space, where $P$ has calibre $(\omega_1,\omega)$, which is not metrizable?
\end{ques}

\end{document}